\DeclareFontFamily{OML}{rsfs}{\skewchar\font'177}
\DeclareFontShape{OML}{rsfs}{m}{n}{ <5> <6> rsfs5 <7> <8> <9> rsfs7
  <10> <10.95> <12> <14.4> <17.28> <20.74> <24.88> rsfs10 }{}
\DeclareMathAlphabet{\mathfs}{OML}{rsfs}{m}{n}
\newcommand{\x}{\ensuremath{\underline{x}}}
\newtheorem{thm}{Theorem}[section]
\newtheorem{lem}[thm]{Lemma}
\newtheorem{cor}[thm]{Corollary}
\newtheorem*{theorem*}{Theorem}
\newtheorem*{example*}{Example}
\numberwithin{equation}{section}
\def\var{\textrm{var}}
\renewcommand{\epsilon}{\varepsilon}
\def\wt{\widetilde}
\def\text#1{\textrm{#1}}
\def\emptyset{\varnothing}
\def\e{\epsilon}
\def\vf{\varphi}
\def\l{\lambda}
\def\s{\sigma}
\def\x{\times}
\def \R{\mathbb R}
\def \N{{\mathbb N}}
\def\E{\mathbb E}
\def \Z{\mathbb Z}
\def\ov{\overline}
\def\un{\underline}
\def\Q{\mathbb Q}
\def\wh{\widehat}
\def\({\biggl(}
\def\){\biggr)}
\def\<{\mathbf\langle}
\def\>{\mathbf\rangle}
\DeclareMathOperator\const{const}
\def\loc{\mathrm{loc}}
\title[Ergodic properties of equilibrium measures for 3-dim flows]{Ergodic properties of equilibrium measures for smooth three dimensional flows}
\author{Fran\c{c}ois Ledrappier, Yuri Lima, and Omri Sarig }
\date{October 26, 2015}
\keywords{Geodesic flow, Markov partition, Reeb flow, symbolic dynamics}
\subjclass[2010]{37B10,37C10 (primary), 37C35 (secondary)}
\address{F. Ledrappier\\ Sorbonne Universit\'es\\ UPMC Univ. Paris 06\\UMR 7599\\ LPMA\\ F-75005\\ Paris\\ France
and CNRS\\ UMR 7599\\ LPMA\\ F-75005\\ Paris\\ France}
\email{fledrapp@nd.edu}
\address{Y. Lima\\ Laboratoire de Math\'ematiques d'Orsay\\ Univ. Paris-Sud\\ CNRS\\ Universit\'e
Paris-Saclay\\ 91405 Orsay\\ France}
\email{yurilima@gmail.com}
\address{O. Sarig\\ Faculty of Mathematics and Computer Science\\ The Weizmann Institute of Science\\ POB 26, Rehovot, Israel}
\email{omsarig@gmail.com}
\begin{document}
\maketitle
\begin{abstract}
Let $\{T^t\}$ be a smooth flow with positive speed and positive topological entropy on a compact smooth three dimensional manifold, and
let $\mu$ be an ergodic measure of maximal entropy. We show that either  $\{T^t\}$ is Bernoulli, or $\{T^t\}$ is isomorphic to the product of a Bernoulli flow and a rotational flow. Applications are given to Reeb flows.
\end{abstract}

\section{Introduction and statement of main results}
\subsection*{Introduction} In 1973, Ornstein and Weiss proved  that the geodesic flow of a compact smooth surface with constant negative curvature is Bernoulli with respect to the Liouville measure \cite{Ornstein-Weiss-Geodesic-Flows}.
Ratner extended this to variable negative curvature \cite{Ratner-Flows-Bernoulli}. In the case of  non-positive and non identically zero curvature, Pesin showed that all ergodic components of the Liouville measure are Bernoulli
\cite{Pesin-Characteristic-1977}, \cite[Thm 12.2.13]{Barreira-Pesin-Non-Uniform-Hyperbolicity-Book}.
It follows from his work that all other ergodic components (if they exist) have zero entropy. Katok and Burns extended Pesin's work to Reeb flows \cite{Katok-Burns-Infinitesimal}. Burns and Gerber proved that geodesic flows on certain surfaces with some positive curvature (``Donnay's examples") are Bernoulli \cite{Burns-Gerber}.
Hu, Pesin and Talitskaya constructed smooth volume-preserving Bernoulli flows
on every compact manifold of dimension at least three \cite{Hu-Pesin-Talitskaya-2004}.

Ratner's work  extends to general Anosov flows equipped with ergodic equilibrium measures of H\"older continuous potentials  \cite{Ratner-Flows-Bernoulli}. In this case the flow is either Bernoulli, or isomorphic to a Bernoulli flow times a rotational flow (this happens in the non-mixing case).
Pesin's work  extends to all  $C^{1+\epsilon}$ flows preserving an ergodic hyperbolic measure whose conditional measures on the unstable manifolds are absolutely continuous with respect to the induced Riemannian measure \cite{Pesin-Izvestia-1976}, \cite{Ornstein-Weiss-General-Bernoulli}, \cite{Katok-Strelcyn}, \cite{Ledrappier-Mesures-de-Sinai}, with the same modification in the non-mixing case.

The measure of maximal entropy does not have absolutely continuous conditional measures, except in special cases \cite{Katok-Closed-Geodesics}.
The purpose of this paper is to determine the ergodic theoretic structure of this measure in the context of general smooth three dimensional flows with positive topological entropy.
Our methods also apply to ergodic equilibrium measures of H\"older potentials with positive entropy.

\subsection*{Basic definitions} Let $(X,\mathfs B,\mu)$ be a Lebesgue probability space.

\medskip
\noindent
{\sc Measurable flow:} A quadruple $\textsf{T}=(X,\mathfs B,\mu,\{T^t\})$ such that $(t,x)\mapsto T^t(x)$ is measurable, and the {\em time--$t$ map}
$(X,\mathfs B,\mu,T^t)$ is probability preserving, $\forall t\in\R$.

\medskip
\noindent
{\sc Eigenfunction:} A non-constant measurable function $f$ is an {\em eigenfunction} of $\textsf{T}$ (with eigenvalue $e^{i\alpha}$)
if for a.e. $x\in X$, $f(T^t x)=e^{i\alpha t}f(x)$ for all $t\in\R$.
$\textsf{T}$ is called {\em ergodic} if $1$ is not an eigenvalue, and {\em weak-mixing} if it has no eigenvalues at all.

\medskip
\noindent
{\sc Entropy:} The {\em entropy} of $\textsf{T}$ is the entropy of the time--1 map $T^1$.

\medskip
\noindent
{\sc Rotational flow:} \label{Page-rotational-flow}
Given $c>0$, the {\em rotational flow} is $T^t(x):=x+t/c \text{ (mod 1)}$ on $\R/\Z$
equipped with the Haar measure. $c$ is called the {\em period}, and it is an invariant of the flow since $c=\min\{t>0: T^t={\rm Id}\}$.

\medskip
\noindent
{\sc Bernoulli flow:} $\textsf{T}$ is called {\em Bernoulli} if $T^1$ is a Bernoulli automorphism.
$\textsf{T}$ is called {\em Bernoulli up to a period} if $\textsf{T}$ is Bernoulli, or if $\textsf{T}$
is isomorphic to the product of a Bernoulli flow and a rotational flow.

\medskip
If $\textsf{T}$ is a Bernoulli flow then $T^t$ is a Bernoulli automorphism, $\forall t\neq 0$ \cite{Ornstein-Imbedding-in-Flows}.
Entropy is a complete set of invariants for Bernoulli flows \cite{Ornstein-B-Flows}, and entropy and period
(if it exists) are a complete set of invariants for Bernoulli up to a period flows since the Bernoulli term is determined
by the entropy and the rotational term is the Pinsker factor, see \cite[Prop. 4.4]{Thouvenot-Handbook}.

\subsection*{Main results}
Let $M$ be a three dimensional compact $C^\infty$ Riemannian manifold without boundary, let $\mathfs B$
be its Borel $\sigma$--algebra,
let $X: M\to TM$ be a $C^{1+\epsilon}$ vector field on $M$ s.t. $X_p\neq 0$, $\forall p\in M$, let $\textsf{T}$ be the flow on
$M$ generated by $X$, and let $\mu$ be a $\textsf{T}$--invariant probability measure.

\medskip
\noindent
{\sc Equilibrium measure:} $\mu$ is an {\em equilibrium measure} of a
{\em potential} $F:M\to\R$ if $h_\mu(T^1)+\int_M F d\mu=\sup\{h_\nu(T^1)+\int_M F d\nu\}$, where sup ranges over
all $\textsf T$--invariant probability measures $\nu$.
If $F= 0$, then $\mu$ is called a {\em measure of maximal entropy}.

\medskip
Equilibrium measures always exist if $X$ is $C^\infty$ and $F$ is continuous \cite{Newhouse-Entropy}.

\begin{thm}\label{Thm-Main}
Under the above assumptions on $M, X,\textsf{T}$,
every equilibrium measure of a H\"older continuous potential has at most countably many ergodic components with positive entropy.
Each of them is Bernoulli up to a period.
\end{thm}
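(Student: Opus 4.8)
The plan is to build a countable symbolic model for the flow and transfer Bernoullicity from the shift to $\textsf T$. First I would reduce to a single ergodic component $\mu$ with $h_\mu(T^1)>0$; since the potential is H\"older, $\mu$ is an equilibrium measure of its own and carries positive entropy, hence by the Ruelle inequality it is hyperbolic along the flow direction's transversal, i.e.\ it has one positive and one negative Lyapunov exponent (the flow direction contributes the zero exponent). The key input is a coding theorem: there is a countable topological Markov flow $(\Sigma_r,\sigma_r)$ --- a suspension with H\"older roof function $r$ over a countable-state TMS $(\Sigma,\sigma)$ --- and a finite-to-one H\"older semiconjugacy $\pi_r\colon\Sigma_r\to M$ intertwining the suspension flow and $\textsf T$, such that $\mu$ lifts to an equilibrium measure $\hat\mu$ of a H\"older potential on $\Sigma_r$, and $\pi_r$ is a measure isomorphism on a set of full $\hat\mu$-measure. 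This is exactly the Sarig-type Markov partition technology for non-uniformly hyperbolic systems, extended from diffeomorphisms to three-dimensional flows; I would invoke it (in the paper this is presumably a ``black box'' theorem proved elsewhere or in a companion section).

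Next I would analyze the base. On the countable TMS $\Sigma$, the projected measure $\bar\mu$ is an equilibrium measure of a H\"older potential, hence (after passing to a topologically transitive component, of which there are at most countably many --- this is where ``at most countably many ergodic components with positive entropy'' comes from) it is the unique such measure and its natural extension is Bernoulli, by the thermodynamic formalism for countable Markov shifts (BIP condition or the relevant spectral gap / RPF theorem). So the time-$1$ map of the base dynamics is Bernoulli. Then I would pass to the suspension: a suspension flow over a Bernoulli base with an $L^1$ roof is Bernoulli up to a period, and the period is governed by the group generated by the values of the roof cocycle --- more precisely by whether the equation $r = c + g\circ\sigma - g$ has a measurable solution for some $c>0$ (the Abramov/Livschitz-type cohomology condition). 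If no such $c$ exists, the suspension flow is weak mixing, and a Bernoulli weak-mixing suspension over a Bernoulli base with summable roof is Bernoulli (via the ``$K$ plus weak-mixing plus loosely Bernoulli'' route, or directly via Ratner's criterion / the flow version of Ornstein's isomorphism theory). If such $c$ exists, one extracts a rotational factor of period $c$ and the complementary factor is Bernoulli; in either case $\Sigma_r$-flow is Bernoulli up to a period.

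Finally I would transfer this back to $\textsf T$ through $\pi_r$: since $\pi_r$ is a measure isomorphism onto $(M,\mu,\textsf T)$ on a full-measure set, the flow $(M,\mu,\textsf T)$ is measurably isomorphic to the suspension flow, hence Bernoulli up to a period, with the same entropy and (if present) the same period. Doing this for each of the countably many positive-entropy ergodic components finishes the theorem. The main obstacle I expect is the coding step: constructing the countable Markov partition for a smooth three-dimensional flow and verifying that the roof function is H\"older and bounded away from $0$ and $\infty$ on the relevant part of $\Sigma$, and that the lift of the equilibrium measure is again an equilibrium measure of a H\"older potential with a well-understood Gibbs structure. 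The secondary difficulty is the dichotomy at the level of the suspension --- showing that the \emph{only} obstruction to Bernoullicity is a genuine rotational factor, which requires knowing that weak mixing upgrades to Bernoulli here; this uses that the base is already Bernoulli (not merely $K$) together with the finite-entropy, loosely-Bernoulli framework for flows.
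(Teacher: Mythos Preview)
Your overall architecture matches the paper's: code the flow by a topologically transitive countable topological Markov flow via the Lima--Sarig machinery, lift $\mu$ to an equilibrium measure $\hat\mu$ of a bounded H\"older potential on $\Sigma_r$, prove that $(\Sigma_r,\sigma_r,\hat\mu)$ is Bernoulli up to a period by the arithmetic/non-arithmetic dichotomy for the roof, and then push the conclusion down to $(M,\mu,\textsf{T})$. The countability of positive-entropy ergodic components also comes out of the coding, as you say.

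There is, however, a genuine gap in your transfer step. You assert that $\pi_r$ is a \emph{measure isomorphism} on a set of full $\hat\mu$--measure, and you use this to conclude directly that $(M,\mu,\textsf{T})$ inherits Bernoulli-up-to-a-period from $(\Sigma_r,\sigma_r,\hat\mu)$. The coding theorem does \emph{not} give this: $\pi_r$ is only \emph{finite-to-one} on $\Sigma_r^\#$, not injective, and the paper never claims it is a measure isomorphism. Consequently $(M,\mu,\textsf{T})$ is only known to be a finite-to-one \emph{factor} of $(\Sigma_r,\sigma_r,\hat\mu)$. The paper closes this gap with a separate lemma: a finite-to-one factor of a flow which is Bernoulli up to a period is again Bernoulli up to a period. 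The Bernoulli case of this lemma is immediate from Ornstein's theorem that factors of Bernoulli automorphisms are Bernoulli; the periodic case requires an additional argument (constructing a Poincar\'e section downstairs from the image of the base section and checking that the induced return map is a factor of a Bernoulli automorphism with constant return time). Without this lemma your proof is incomplete.

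A secondary remark: your suggested route ``$K$ plus weak-mixing plus loosely Bernoulli'' is not the right framework. Loosely Bernoulli governs Kakutani equivalence, not measure isomorphism, and does not by itself upgrade $K$ to Bernoulli. The paper instead follows Ratner: in the non-arithmetic case one first proves the $K$ property by bounding the Pinsker factor via the Bowen--Marcus cocycles and a holonomy-group argument, and then constructs an increasing generating sequence of very weak Bernoulli partitions \`a la Ornstein--Weiss to deduce Bernoulli. In the arithmetic case one recodes to a constant suspension over a mixing countable Markov shift and invokes the Bernoulli property of equilibrium states on such shifts. Your mention of ``Ratner's criterion'' points in the right direction, but the loosely-Bernoulli alternative should be dropped.
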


Periods can exist (e.g. for the constant suspension of an Anosov diffeomorphism), but
sometimes they can be discounted.
Let $\{T^t\}$ be a Reeb flow on a compact smooth three dimensional contact manifold $M$ (see \S\ref{Section-Reeb} for definitions).
For example, $\{T^t\}$ could be the geodesic flow of a surface, or the Hamiltonian flow of a system with two degrees of freedom on a regular energy surface \cite{Abraham-Marsden-Mechanics}. Katok and Burns showed that every ergodic absolutely continuous invariant measure with positive entropy is Bernoulli \cite{Katok-Burns-Infinitesimal}. The following result covers other  measures of interest,  such as the measures of maximal entropy.

\begin{thm}\label{Thm-Reeb}
If $\textsf{T}$ is a three dimensional Reeb flow, then every equilibrium measure of a H\"older continuous potential has at most countably
many ergodic components with positive entropy. Each of them is Bernoulli.
\end{thm}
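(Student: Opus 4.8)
The plan is to deduce Theorem \ref{Thm-Reeb} from Theorem \ref{Thm-Main} by ruling out the periodic (non-mixing) case for Reeb flows. By Theorem \ref{Thm-Main}, any equilibrium measure $\mu$ of a H\"older potential has at most countably many positive-entropy ergodic components, each Bernoulli up to a period; so it suffices to show that each such component $\mu_i$ is actually weak-mixing (equivalently, that $T^1$ restricted to that component is Bernoulli with no rotational factor). Equivalently, I must show that the flow $\textsf{T}$ restricted to a positive-entropy ergodic component of an equilibrium measure has no nonconstant measurable eigenfunction. The key structural input is that a Reeb flow admits a global cross-section or, failing that, a symbolic coding by a suspension over a countable topological Markov shift with H\"older roof function, as constructed in the companion symbolic-dynamics machinery underlying Theorem \ref{Thm-Main}; the eigenvalues of a suspension flow are governed by the cohomology class of the roof function.

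The main steps I would carry out are as follows. First, recall that an equilibrium measure with positive entropy lifts, on each ergodic component, to an equilibrium measure for the suspension of a countable Markov shift $\Sigma$ with a H\"older roof $r$; a nonconstant eigenfunction with eigenvalue $e^{i\alpha}$, $\alpha\neq 0$, of the suspension flow forces the existence of a measurable (hence, by a Liv\v{s}ic-type regularity argument, H\"older) solution $g\colon\Sigma\to\mathbb S^1$ to the cohomological equation $g\circ\sigma = e^{i\alpha r} g$. Second, I would show that for a Reeb flow this equation has no solution for $\alpha\neq0$: the obstruction is that periodic orbits of a Reeb flow have a \emph{dense set of periods modulo} $2\pi/\alpha$ — more precisely, the set $\{\text{periods of periodic orbits}\}$ is not contained in a coset of a discrete subgroup of $\mathbb R$. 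This is where the contact structure enters: the Reeb flow is the boundary restriction of a symplectic (or the geodesic flow on a Liouville-type) setting, and one exhibits two periodic orbits whose periods are rationally independent mod $2\pi/\alpha$, e.g. by a small time-change / perturbation argument that is unavailable for the constant suspension of an Anosov map. Third, combining this with the fact (from the coding) that the periods of periodic orbits, through the roof function $r$, must lie in the kernel class determined by any eigenvalue, I conclude $\alpha = 0$, so the component is weak-mixing; together with Theorem \ref{Thm-Main} it is then Bernoulli.

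The main obstacle, I expect, is the second step: proving that a three dimensional Reeb flow genuinely has no periodic eigenfunctions on a positive-entropy ergodic component. For geodesic flows one can use the two-to-one structure / the time-reversal symmetry and the abundance of closed geodesics of incommensurable lengths, but for a general Reeb flow one must argue intrinsically from the contact form. The cleanest route is likely: (i) show that the Reeb flow cannot be a constant-time suspension — indeed a contact form whose Reeb flow is a suspension with constant return time would make the return map a symplectomorphism of a surface with a very rigid structure, and one derives a contradiction with positive topological entropy, or more directly one shows the first return time to any transversal is non-constant; (ii) upgrade ``non-constant return time'' to ``periods generate a dense subgroup of $\mathbb R$'' using the non-uniform hyperbolicity furnished by the positive-entropy measure (shadowing/specification gives periodic orbits with periods approximating any value in the support of the measure's ``period spectrum''), which rules out all nonzero eigenvalues. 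Throughout, the regularity bootstrap (measurable $\Rightarrow$ continuous eigenfunction, via the product structure of the equilibrium state and a Liv\v{s}ic argument along stable/unstable holonomies) must be handled carefully on each ergodic component, since the global measure need not be ergodic; but this is parallel to the corresponding step in the proof of Theorem \ref{Thm-Main}.
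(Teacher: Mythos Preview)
Your reduction is correct: by Theorem~\ref{Thm-Main} each positive-entropy ergodic component is Bernoulli up to a period, and the task is exactly to rule out the arithmetic case $e^{i\theta r}=h/(h\circ\sigma)$ in the symbolic model. But your proposed mechanism for doing so has a genuine gap. None of the three arguments you sketch actually closes. A perturbation argument is not available (the flow is fixed). The claim in (i) that a constant-time suspension would force a surface symplectomorphism of ``rigid structure'' incompatible with positive entropy is false as stated: area-preserving surface diffeomorphisms can certainly have positive entropy. And the specification argument in (ii) does not produce incommensurability: shadowing manufactures periodic orbits whose periods are close to finite sums of return times, but if $r$ is arithmetic those sums already lie in $\frac{2\pi}{\theta}\Z$, so you only recover periods in the same discrete subgroup. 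In short, you have not used the contact form at all, and nothing in your outline distinguishes a Reeb flow from, say, the constant suspension of an Anosov diffeomorphism (where the period genuinely occurs).

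The paper's argument is different and uses the contact form $\alpha$ in an essential, geometric way. The key observation (Lemma~\ref{lemma-quadrilaterals}) is that since the Reeb flow preserves $\alpha$ and vectors in $E^s,E^u$ are contracted in forward/backward time, one has $E^s(x)\oplus E^u(x)=\ker\alpha_x$; hence any closed curve $\gamma$ whose legs lie in strong stable/unstable manifolds (a ``quadrilateral'') satisfies $\int_\gamma\alpha=0$. Now assume the holonomy group is $c\Z$ with $c>0$. Using the local product structure of the induced equilibrium measure, one builds an arbitrarily small $su$--loop in the symbolic model whose Bowen--Marcus weight has modulus $<c$, hence is zero; its $\pi_r$--image is a genuine quadrilateral $\gamma\subset M$ bounding a small immersed surface $U$ uniformly transverse to the Reeb field. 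Flowing $U$ for a short time $t_0$ produces a flow box of nonzero $\omega$--volume, but Fubini plus $i_X\omega=d\alpha$ plus Stokes gives
\[
0\neq \int_0^{t_0}\!\!\int_{T^tU} d\alpha\,dt=\int_0^{t_0}\!\!\int_{T^t\gamma}\alpha\,dt=0,
\]
a contradiction. So the obstruction is not the period spectrum but the positivity of $d\alpha$ on contact-transverse surfaces versus the vanishing of $\alpha$ along $E^s\oplus E^u$.
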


%
%


\begin{cor}\label{Corollary-Surfaces}
Let $S$ be a compact smooth orientable surface without boundary, with nonpositive and non-identically zero curvature.
Then the geodesic flow of $S$ is Bernoulli with respect to its (unique) measure of maximal entropy.
\end{cor}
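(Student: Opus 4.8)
The plan is to realize the geodesic flow of $S$ as a three dimensional Reeb flow and then apply Theorem \ref{Thm-Reeb}. Let $M:=T^1S$ be the unit tangent bundle of $S$: this is a compact boundaryless $C^\infty$ manifold of dimension three, and since the metric on $S$ is $C^\infty$ the geodesic vector field $X$ on $M$ is $C^\infty$ and nowhere zero. Transporting the tautological one-form of $T^*S$ to $M$ via the metric isomorphism $TS\cong T^*S$ and restricting it to $M$ gives the canonical contact form of the unit tangent bundle, and a classical computation identifies its Reeb vector field with $X$. Thus the geodesic flow $\textsf{T}=\{T^t\}$ is a three dimensional Reeb flow in the sense of \S\ref{Section-Reeb}, and Theorem \ref{Thm-Reeb} applies to it.

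Next I would record that $\textsf{T}$ has positive topological entropy, which is what makes Theorem \ref{Thm-Reeb} bite. By the Gauss--Bonnet theorem $2\pi\chi(S)=\int_S K\,dA$, and this is strictly negative because $K\le 0$ and $K\not\equiv 0$; hence $\chi(S)<0$, so the orientable surface $S$ has genus at least two and $\pi_1(S)$ has exponential growth. Therefore the volume entropy $\lambda$ of the universal Riemannian cover of $S$ is positive, and Manning's inequality yields $h_{\mathrm{top}}(T^1)\ge\lambda>0$. Moreover the measure of maximal entropy is unique: a compact nonpositively curved non-flat surface is a rank one manifold — a geodesic through a point where $K<0$ carries no nonzero parallel perpendicular Jacobi field — so Knieper's uniqueness theorem for rank one manifolds applies.

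It remains to put the pieces together. By \cite{Newhouse-Entropy} a measure of maximal entropy exists, and by the previous paragraph it is unique; call it $\mu$. A unique measure of maximal entropy is automatically ergodic, and $h_\mu(T^1)=h_{\mathrm{top}}(T^1)>0$. Since $\mu$ is an equilibrium measure for the potential $F\equiv 0$, Theorem \ref{Thm-Reeb} says that each of its ergodic components with positive entropy is Bernoulli; as $\mu$ is its own only ergodic component and has positive entropy, $\mu$ is Bernoulli. I do not anticipate real difficulty here: the one point that must be checked with care is that the geodesic flow is literally the Reeb flow of the contact manifold described in \S\ref{Section-Reeb}, after which the corollary is just the case $F\equiv 0$ of Theorem \ref{Thm-Reeb} together with the classical facts that this geodesic flow has positive entropy and a unique maximal measure.
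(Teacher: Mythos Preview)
Your proof is correct and follows the same overall strategy as the paper: realize the geodesic flow as a Reeb flow, establish positive topological entropy, invoke Knieper's uniqueness theorem for rank one manifolds, and apply Theorem~\ref{Thm-Reeb} to the ergodic measure of maximal entropy.

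The only difference is in how you obtain positive topological entropy. The paper argues via the Liouville measure: by Pesin's entropy formula the curvature hypotheses force $h_m(T^1)>0$, and the variational principle then gives $h_{\mathrm{top}}(T^1)>0$. You instead use Gauss--Bonnet to get $\chi(S)<0$, hence exponential growth of $\pi_1(S)$, positive volume entropy, and Manning's inequality. Both arguments are standard and correct; yours is slightly more topological, the paper's slightly more dynamical, but neither offers a real advantage over the other here.
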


\begin{proof}
Let $m$ be the invariant Liouville measure. By the curvature assumptions, $m$ has positive metric entropy, see
for example \cite[Corollary 3]{Pesin-Entropy-Formulas}. Hence the geodesic flow has positive topological entropy.
Also by the curvature assumptions, $S$ is a rank one manifold \cite{Ballmann-Brin-Eberlein}, therefore
there is a unique measure of maximal entropy \cite{Knieper-Rank-One-Entropy}. By uniqueness, it is ergodic.
By Theorem \ref{Thm-Reeb}, it is Bernoulli.
\end{proof}


The ``geometric potential" $J(x):=-\frac{d}{ds}|_{s=0}\log\|dT^s|_{E^u(x)}\|$  and its scalar multiples (see \cite{Bowen-Ruelle-SRB} and \S\ref{Section-Geometric-Potential}) are not directly covered by Theorems \ref{Thm-Main} and \ref{Thm-Reeb}, because they are  not necessarily H\"older continuous or even globally defined on $M$. But our methods do apply to them and give the following:

\begin{thm}\label{Thm-Geometric-Potential}
Under the assumptions of Theorem \ref{Thm-Main},
every equilibrium measure of $tJ$ $(t\in\R)$ has at most countably many
ergodic components with positive entropy. Each is Bernoulli up to a period.
If $\textsf{T}$ is  a Reeb flow, each  is Bernoulli.
\end{thm}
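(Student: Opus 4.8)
The plan is to derive Theorem \ref{Thm-Geometric-Potential} from Theorems \ref{Thm-Main} and \ref{Thm-Reeb} together with their proofs, treating the geometric potential as a limit of H\"older potentials for which those proofs apply verbatim. The first step is to recall that $J$ is defined only on the set $\NUH$ of points lying on a full orbit with a well-defined unstable direction $E^u$; by Oseledets and the positivity of entropy, every ergodic component $\nu$ of an equilibrium measure of $tJ$ with positive entropy is carried by $\NUH$ and is hyperbolic, so $J$ is $\nu$-a.e. defined and, by the smoothness of $X$ and the absolute continuity of the stable/unstable foliations along a fixed Pesin block, $J$ is H\"older continuous \emph{on each Pesin block} even though it need not extend H\"olderly to $M$. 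Thus the relevant object for the symbolic coding is a potential that is H\"older on the pieces of a countable Markov partition, which is exactly the regularity the construction in the body of the paper (coding via Pesin charts, irreducible components of a countable topological Markov shift) is built to handle.

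The second step is to note that the proof of Theorem \ref{Thm-Main} factors through three ingredients that are insensitive to whether the potential is globally H\"older or only H\"older on Pesin blocks: (i) the existence of a Markov coding $\pi\colon\Sigma_r\to M$ of a set of full $\mu$-measure, where $\Sigma_r$ is a suspension over an irreducible countable Markov shift with a H\"older roof; (ii) the fact that the lift of $\nu$ to $\Sigma_r$ is the equilibrium measure of a \emph{locally} H\"older potential $F\circ\pi$ on $\Sigma_r$, hence (after the standard reduction to the base) a Gibbs measure for a locally H\"older function on a countable Markov shift, which by the thermodynamic formalism of Buzzi--Sarig and Daon is Bernoulli for the induced system; and (iii) the Abramov/Rudolph-type argument upgrading the base Bernoulli property to "Bernoulli up to a period" for the flow, the period being the group generated by the values of the roof on periodic orbits, equivalently the Pinsker factor. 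For $F=tJ$, ingredient (i) is already available since the coding does not depend on $F$ at all, and ingredient (iii) is purely ergodic-theoretic; so the only thing to check is (ii), namely that $tJ\circ\pi$ is locally H\"older on $\Sigma_r$ — and this follows from the H\"older-on-Pesin-blocks property above, because along the image of a single cylinder the Pesin block, and hence the relevant charts, are fixed.

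The third step handles the Reeb case: under the contact assumption the flow is time-reversible by the contact involution (cf. \S\ref{Section-Reeb}), and $J\circ(\text{time reversal})$ is cohomologous to $-J$ up to the stable Lyapunov contribution, so the argument in the proof of Theorem \ref{Thm-Reeb} that rules out a period — showing the roof function's periodic values generate a dense subgroup of $\R$, equivalently that the flow is mixing on each positive-entropy component — goes through with $F=tJ$ substituted for the H\"older potential, since that argument only used (a) the symbolic picture and (b) the contact symmetry, not the global regularity of $F$. Combining, each positive-entropy ergodic component of an equilibrium measure of $tJ$ is Bernoulli up to a period in general, and Bernoulli when $\textsf{T}$ is Reeb; countability of the components is inherited from the countability statement in Theorem \ref{Thm-Main}, which again is a property of the coding and not of $F$.

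The main obstacle I expect is ingredient (ii): verifying that $tJ\circ\pi$ is genuinely locally H\"older on the symbolic suspension, with uniform exponent on each cylinder. The subtlety is that $J$ involves the derivative cocycle restricted to $E^u$, and $E^u$ varies only H\"older-continuously and only on Pesin blocks whose "size" (the local hyperbolicity constants) shrinks along the shift; one must check that the coding's cylinder sets are adapted finely enough to a fixed Pesin block that the variation of $\log\|dT^s|_{E^u}\|$ across a cylinder is controlled by the symbolic distance. This is where one has to go back into the construction of the Markov partition from the body of the paper and quote (or re-derive) the estimate that within a cylinder the unstable direction and its image under $dT^s$ vary H\"olderly — a bound that is implicit in the hyperbolicity estimates used to prove the Markov property but must be extracted explicitly for the potential $tJ$.
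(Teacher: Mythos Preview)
Your overall architecture---reduce to the symbolic coding, check the lifted potential has enough regularity for the TMF thermodynamic formalism, then invoke the Bernoulli-up-to-period machinery---matches the paper's. But there are two issues, one a genuine gap and one a difference in route.

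\textbf{The Reeb step is wrong.} General Reeb flows are not time-reversible; there is no ``contact involution'' conjugating $T^t$ to $T^{-t}$ (geodesic flows have the flip $v\mapsto -v$, but that is special to the tangent-bundle setting). The paper's mechanism for ruling out a period in the Reeb case (\S\ref{Section-Reeb}) has nothing to do with reversibility or with the potential: one shows $E^s\oplus E^u=\ker\alpha$, so any quadrilateral $\gamma$ built from pieces of stable and unstable manifolds satisfies $\int_\gamma\alpha=0$; but if the holonomy group were $c\Z$ with $c>0$, one could build a small quadrilateral bounding a surface transverse to the Reeb field, and Stokes' theorem applied to the volume form $\alpha\wedge d\alpha$ forces a nonzero contribution, a contradiction. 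This argument is independent of the potential and therefore applies to $tJ$ verbatim---but not for the reason you give.

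\textbf{The regularity step takes a different route.} The paper does not lift $J$ to $\Sigma_r$ and argue via H\"older-on-Pesin-blocks. Instead it passes to the Poincar\'e section $\Lambda$ and works with the discrete-time geometric potential $J^f(p)=-\log\|df_p\vec v^{\,u}_p\|$. A direct computation (Lemma~\ref{Lemma-relation-geometric-potentials}) gives $\int_0^{R(p)}J(T^sp)\,ds=J^f(p)+U(f(p))-U(p)$ for the measurable coboundary $U=\log|\alpha|$, so equilibrium measures of $tJ$ on $M$ correspond to equilibrium measures of $tJ^f-P_{\rm top}(tJ)R$ on $\Lambda$. The H\"older continuity of $(tJ^f)\circ\pi$ on $\Sigma$ then follows immediately from the uniform H\"older regularity of $df$ on $\Lambda'$ together with the fact that $x\mapsto\vec v^{\,u}_{\pi(x)}$ is H\"older, which is \cite[Lemma~5.7]{Lima-Sarig}---already proved as part of the coding construction. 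Your proposed route (lifting $J$ directly and controlling variation across cylinders via Pesin-block estimates) is plausible but would require tracking how the hyperbolicity constants degenerate along the shift; the section-map detour sidesteps this entirely by replacing the flow derivative with the return-map derivative, for which the needed estimate is already on the shelf.
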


\begin{cor}{\cite[Thm 9.7]{Pesin-Characteristic-1977}}
Let $S$ be a compact smooth orientable surface without boundary, with nonpositive and non-identically
zero curvature. Then the geodesic flow of $S$ is Bernoulli with respect to every positive entropy ergodic
component of the invariant Liouville measure. There are at most countably many such components.
\end{cor}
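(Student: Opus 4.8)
The plan is to recognize the invariant Liouville measure as an equilibrium measure of the geometric potential with parameter $t=1$ and then quote Theorem \ref{Thm-Geometric-Potential} in the Reeb case. Write $M:=T^1S$ for the unit tangent bundle, let $\textsf{T}=\{T^t\}$ be the geodesic flow, and let $m$ be the normalized Liouville measure. As in the proof of Corollary \ref{Corollary-Surfaces}, the curvature hypotheses (nonpositive, not identically zero) imply, via the Pesin entropy formula \cite{Pesin-Entropy-Formulas}, that $h_m(T^1)>0$; hence $\textsf{T}$ has positive topological entropy, and since $M$ is a compact $C^\infty$ three--manifold and the geodesic vector field $X$ is nowhere vanishing and $C^\infty$, the standing hypotheses of Theorem \ref{Thm-Main} are met. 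Moreover $\textsf{T}$ is a Reeb flow, namely the Reeb flow of the canonical (Liouville) contact form on $T^1S$ (see \S\ref{Section-Reeb}).

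The key step is to check that $m$ is an equilibrium measure of $tJ$ with $t=1$, where $J$ is the geometric potential. In nonpositive curvature the unstable horospherical subbundle $E^u$ is defined everywhere on $M$, so $J$ is a well-defined bounded function on $M$, and for every $\textsf{T}$--invariant Borel probability measure $\nu$ the integral $\int_M J\,d\nu$ equals $-\chi^u(\nu)$, where $\chi^u(\nu)\ge 0$ is the Lyapunov exponent of $\nu$ along $E^u$. Ruelle's inequality for the time--one map gives $h_\nu(T^1)\le \chi^u(\nu)$, that is $h_\nu(T^1)+\int_M J\,d\nu\le 0$, so the pressure of $J$ is at most $0$. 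On the other hand, the conditional measures of $m$ on the unstable horospherical manifolds are absolutely continuous with respect to the induced Riemannian volume, so the Pesin entropy formula yields $h_m(T^1)=\chi^u(m)$, i.e. $h_m(T^1)+\int_M J\,dm=0$. Thus $m$ attains the supremum and is an equilibrium measure of $J=1\cdot J$. (We appeal to Theorem \ref{Thm-Geometric-Potential} rather than to Theorems \ref{Thm-Main}--\ref{Thm-Reeb} precisely because $J$ need not be H\"older; the relevant facts about $J$ in the nonpositively curved case are collected in \S\ref{Section-Geometric-Potential}.)

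Now apply Theorem \ref{Thm-Geometric-Potential} with $t=1$: since $m$ is an equilibrium measure of $tJ$ and $\textsf{T}$ is a Reeb flow, $m$ has at most countably many ergodic components of positive entropy and each of them is Bernoulli, which is the assertion of the corollary. One may equivalently phrase the last step through the ergodic decomposition $m=\int m_\omega\,d\pi(\omega)$: from $0=h_m(T^1)+\int_M J\,dm=\int\big(h_{m_\omega}(T^1)+\int_M J\,dm_\omega\big)\,d\pi(\omega)$, with each integrand $\le 0$, one gets that $\pi$--a.e. $m_\omega$ is itself an ergodic equilibrium measure of $J$, so Theorem \ref{Thm-Geometric-Potential} applies to each component individually.

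The main obstacle in executing this plan is the identification of $m$ as the equilibrium state of $J$. The upper bound is just Ruelle's inequality, but the equality $h_m(T^1)+\int_M J\,dm=0$ requires the absolute continuity of the conditional measures of Liouville measure along unstable manifolds, together with a Pesin entropy formula, in a setting that is only nonuniformly hyperbolic because flat geodesics may be present; this is classical (it is the content of Pesin's theory and of \S\ref{Section-Geometric-Potential}) but it is the one input beyond Theorem \ref{Thm-Geometric-Potential} that must be supplied with care. Everything else is bookkeeping.
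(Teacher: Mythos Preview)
Your proposal is correct and follows essentially the same route as the paper's proof: identify the Liouville measure as an equilibrium state of the geometric potential $J$ via the Pesin entropy formula (for the lower bound) and Ruelle's inequality (for the upper bound), then invoke Theorem \ref{Thm-Geometric-Potential} in the Reeb case. The paper's proof is two sentences to this effect; your version simply unpacks the reasoning in more detail and adds the (correct and useful) remark that the ergodic decomposition shows each positive-entropy component is individually an equilibrium state of $J$.
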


\begin{proof}
The invariant Liouville measure is an equilibrium measure for the
geometric potential $J(x)$, by the Pesin Entropy Formula and the Ruelle Entropy Inequality. It has positive metric entropy,
as shown in the proof of Corollary \ref{Corollary-Surfaces}.
\end{proof}

\subsection*{Methodology}
Our approach is similar to that of \cite{Ratner-Flows-Bernoulli,Ratner-Flows-K}: First we code the flow as a
topological Markov flow (H\"older suspension of a topological Markov shift), and then we analyze equilibrium measures for the symbolic model.
The first step was done in \cite{Lima-Sarig}. The second step is the subject of the present work.

The ergodic behavior of equilibrium measures on topological Markov flows depends on the height function $r$.
If $r$ is cohomologous to a function taking values in a discrete subgroup, then one can choose a coding with constant height function,
and deduce that the flow is isomorphic to the product of a Bernoulli flow and a rotational flow. If $r$ is not cohomologous to a function
taking values in a discrete subgroup, then one can exhibit a generating sequence of very weak Bernoulli partitions as in
\cite{Ornstein-Weiss-Geodesic-Flows,Ratner-Flows-Bernoulli}, and conclude that the flow is Bernoulli. An important
step in the proof of the very weak Bernoulli property is to prove the K property.
This is done using the method of Gurevi\v{c} \cite{Gurevich-K-flows}.

In Ratner's case the flow is Anosov, and the symbolic flow is a suspension over a topological Markov shift with finite alphabet \cite{Ratner-MP-n-dimensions}.
In our case the flow is a general $C^{1+\epsilon}$ flow on a three dimensional manifold, and the topological
Markov shift has countable alphabet \cite{Lima-Sarig}. The thermodynamic formalism for countable Markov shifts \cite{Buzzi-Sarig} provides us with
the local product structure we need to implement the ideas of \cite{Gurevich-K-flows,Ornstein-Weiss-Geodesic-Flows,Ratner-Flows-Bernoulli,Ratner-Flows-K}.

The paper is divided into two parts. The first contains the analysis of topological Markov flows. The second contains the application to smooth flows, and in particular to Reeb flows and geodesic flows.

\vspace{.5cm}
\noindent
{\bf\large Part 1. Topological Markov Flows}

\section{Topological Markov Flows}

\subsection*{Topological Markov shifts (TMS)} Let $\mathfs G$ be a directed graph with countable set of vertices $V$.
We write $v\to w$ if there is an edge from $v$ to $w$. We assume throughout that  for every $v$  there
are $u,w$ s.t. $u\to v, v\to w$, and that $\mathfs G$ is not a cycle.

\medskip
\noindent
{\sc Topological Markov shift (TMS):} The {\em topological Markov shift} (TMS) associated to $\mathfs G$ is the discrete-time topological dynamical system $\sigma:\Sigma\to\Sigma$ where
$$
\Sigma=\Sigma(\mathfs G):=\{\textrm{paths on }\mathfs G\}=\{\{v_i\}_{i\in\Z}:v_i\to v_{i+1},\forall i\in\Z\},
$$
and $\sigma:\{v_i\}_{i\in\Z}\mapsto \{v_{i+1}\}_{i\in\Z}$ is the {\em left shift}.

\medskip
Points in $\Sigma$ will be denoted by $x=\{x_i\}_{i\in\Z}$.
The topology of $\Sigma$ is given by the   metric $d(x,y):=\exp[-\min\{|n|:x_n\neq y_n\}]$.
The Borel $\sigma$-algebra $\mathfs B(\Sigma)$ is  generated by the {\em cylinders}
$$
{_m[}a_0,\ldots,a_{n-1}]:=\{x\in\Sigma:x_{i+m}=a_i\text{ for all }i=0,\ldots,n-1\}.
$$
The index $m$ denotes the left-most coordinate of the constraint. If it is zero, we will simply write
$[\un{a}]:={_0[}\un{a}]$. $n$ is called the {\em length} of the cylinder, also denoted by $|\un{a}|$.
A cylinder is non-empty iff $a_0\to\cdots\to a_{n-1}$ is a path on $\mathfs G$. In this case we call the word $\un{a}$ {\em admissible}.

For $x\in\Sigma$ and $i<j$ in $\Z$, let $x_i^j:=(x_i,\ldots,x_j)$, $x_i^\infty:=(x_i,x_{i+1},\ldots)$, and $x_{-\infty}^i:=(\ldots,x_{i-1},x_i)$.

A TMS is {\em topologically transitive} iff for every $u,v\in V$ there is a finite path on $\mathfs G$ from $u$
to $v$. It is {\em topologically mixing} iff for every $u,v\in V$ there is
$N=N(u,v)$ s.t. for every $n\geq N(u,v)$ there is a path of length $n$  on $\mathfs G$ from $u$ to $v$.

Every ergodic $\sigma$--invariant probability measure on $\Sigma$ is carried by a topologically transitive
TMS inside $\Sigma$. If the measure is mixing, then the TMS is topologically mixing.

Every topologically transitive TMS has a {\em spectral decomposition}
$\Sigma=\biguplus_{i=0}^{p-1}\Sigma_i$ where each $\Sigma_i$ is the union of cylinders of length one
at the zeroth position, $\sigma^p:\Sigma_i\to\Sigma_i$ is topologically conjugate to a topologically
mixing TMS for every $i$, and $\sigma(\Sigma_i)=\Sigma_{i+1\text{(mod }p)}$  \cite{Kitchens-Book}.

\subsection*{Topological Markov flows (TMF)}
Let $r:\Sigma\to\R^+$ be H\"older continuous, bounded away from zero and infinity, and let
$\Sigma_r:=\{(x,t):x\in\Sigma,0\leq t<r(x)\}$.

\medskip
\noindent
{\sc Topological Markov flow (TMF):} The {\em topological Markov flow} (TMF) with
{\em roof function} $r$ and {\em basis} $\sigma:\Sigma\to\Sigma$  is the flow
$\{\sigma_r^\tau\}$ on $\Sigma_r$ which increases the $t$ coordinate at unit speed
subject to the identifications $(x, r(x))\sim (\sigma(x),0)$.

\medskip
Formally, $\sigma_r^\tau$ is defined as
$\sigma_r^\tau(x,t):=(\sigma^n(x),t+\tau-r_n(x))$ for the unique $n\in\Z$ s.t. $0\leq t+\tau-r_n(x)<r(\sigma^{n}(x))$
where $r_n$ is the $n$--th {\em Birkhoff sum}. Recall that
$r_n:=r+r\circ\sigma+\cdots+r\circ\sigma^{n-1}$ for $n\geq 1$, and that there is a unique way to extend
the definition to $n\leq 0$ so that the {\em cocycle identity} $r_{m+n}=r_n+r_m\circ\sigma^n$
holds for all $m,n\in\Z$. It is given by $r_0:=0$ and $r_{n}:=-r_{|n|}\circ\sigma^{-|n|}$ for $n<0$.\label{TopMarkovFlowDefiPage}
The cocycle identity guarantees that
$\sigma_r^{\tau_1+\tau_2}=\sigma_r^{\tau_1}\circ\sigma_r^{\tau_2}$ for all $\tau_1,\tau_2\in\R$.

A TMF is topologically transitive iff its basis is a topologically transitive TMS, but the same is not true
for topological mixing. For instance, if the roof function is constant then the TMF is never
topologically mixing. By the spectral decomposition \cite{Kitchens-Book}, every TMF whose basis is a topologically transitive
TMS can be recoded as a TMF whose basis is a topologically mixing TMS. Just replace $\Sigma$ by $\Sigma_0$ and $r$ by $r_p$. \label{TMF-mixing-basis}
Let $\mu$ be a $\sigma_r$--invariant probability measure on $\Sigma_r$.

\medskip
\noindent
{\sc Induced measure:} The {\em induced measure} of $\mu$ is the unique $\sigma$--invariant probability measure $\nu$ on $\Sigma$ s.t.
$
\mu=\frac{1}{\int_\Sigma rd\nu}\int_\Sigma \int_0^{r(x)}\delta_{(x,t)}dt d\nu(x).
$

\medskip
Above, $\delta$ denotes the Dirac measure. A $\sigma_r$--invariant measure is ergodic iff its induced measure is.
Every ergodic $\sigma_r$--invariant measure on $\Sigma_r$ is carried by a
TMF whose basis is a topologically transitive TMS.

\subsection*{Bowen-Walters Metric \cite{Bowen-Walters-Metric}} This is a metric which makes  $\sigma_r:\Sigma_r\to\Sigma_r$ continuous.
Suppose first that  $r\equiv 1$ (constant suspension).

Let $\psi:\Sigma_1\to\Sigma_1$ be the suspension flow, and introduce the following terminology:
\begin{enumerate}[$\circ$]
\item {\em Horizontal segments:} Ordered pairs $[z,w]_h\in \Sigma_1\x\Sigma_1$ where  $z=(x,t)$ and $w=(y,t)$
have the same height $0\leq t<1$. The
{\em length} of a horizontal segment $[z,w]_h$ is defined as
$
\ell([z,w]_h):=(1-t)d(x,y)+td(\sigma(x),\sigma(y)).
$
\item {\em Vertical segments:} Ordered pairs $[z,w]_v\in\Sigma_1\x\Sigma_1$ where $w=\psi^t(z)$ for some $t$. The {\em length} of a vertical segment $[z,w]_v$ is
$
\ell([z,w]_v):=\min\{|t|>0:w=\psi^t(z)\}$.
\item {\em Basic paths} from $z$ to $w$: $\gamma:=(z_0=z\xrightarrow[]{t_0}z_1\xrightarrow[]{t_1}\cdots\xrightarrow[]{t_{n-2}}z_{n-1}\xrightarrow[]{t_{n-1}}z_n=w)$ with $t_i\in\{h,v\}$ such that $[z_{i-1},z_i]_{t_{i-1}}$
is a horizontal segment if $t_{i-1}=h$ , and a vertical segment if $t_{i-1}=v$. Define $\ell(\gamma):=\sum_{i=0}^{n-1} \ell([z_i,z_{i+1}]_{t_i})$.
\end{enumerate}

\medskip
\noindent
{\sc Bowen-Walters Metric on $\Sigma_1$:} $d_1(z,w):=\inf\{\ell(\gamma)\}$ where $\gamma$
ranges over all basic paths from $z$ to $w$.

\medskip
Next we consider the general case $r\not\equiv 1$. The idea is to use a canonical bijection from $\Sigma_r$ to $\Sigma_1$
and declare it to be an isometry.

\medskip
\noindent
{\sc Bowen-Walters metric on $\Sigma_r$:}
$d_r(z,w):=d_1(\vartheta_r(z),\vartheta_r(w))$, where  $\vartheta_r:\Sigma_r\to\Sigma_1$ is given by
$\vartheta_r(x,t):=(x,t/r(x))$.
\label{Bowen-Walters-Metric-Page}

\begin{lem}[\cite{Bowen-Walters-Metric,Lima-Sarig}]\label{Lemma-BW}
 $d_r$ is a metric, and $\sigma_r^t:\Sigma_r\to\Sigma_r$ is continuous  with respect to $d_r$.
 Moreover, $(t,x)\mapsto \sigma_r^t(x)$ is H\"older continuous on $[-1,1]\times\Sigma$.
\end{lem}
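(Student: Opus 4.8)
The plan is to reduce everything to the constant-speed suspension and then to a count of roof crossings, following \cite{Bowen-Walters-Metric}. Since $\vartheta_r:\Sigma_r\to\Sigma_1$ is a bijection with $d_r=d_1\circ(\vartheta_r\times\vartheta_r)$, it suffices to show $d_1$ is a metric; so I would first work on $\Sigma_1$ with the unit-speed suspension $\psi$. Symmetry of $d_1$ is clear by reversing a basic path (a reversed horizontal or vertical segment is again of the same type and length), the triangle inequality follows by concatenating basic paths, and $d_1(z,z)=0$ via the trivial path; the only substantive point is that $d_1(z,w)=0$ forces $z=w$. For this I would prove the stronger \emph{local} statement: there are $\epsilon_0\in(0,\tfrac12)$ and $c>0$ such that every basic path $\gamma$ from $z=(x,s)$ to $w=(y,u)$ with $\ell(\gamma)<\epsilon_0$ satisfies $\ell(\gamma)\ge c\,\rho(z,w)$, where $\rho$ is the natural local metric on $\Sigma_1$ — comparable to $d(x,y)+|s-u|$ for $s,u$ bounded away from $\{0,1\}$, and incorporating the identification $(x,1^-)\sim(\sigma x,0^+)$ near the roof. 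The mechanism is that a basic path of length $<1$ cannot wind once around the flow direction nor connect disjoint cylinders: a vertical segment changes the height by exactly its length, while a horizontal segment at height $t$ has length in $[e^{-1}d(\cdot,\cdot),\,e\,d(\cdot,\cdot)]$, since $e^{-1}d(x,y)\le d(\sigma x,\sigma y)\le e\,d(x,y)$ is immediate from the formula for $d$. Comparing the zeroth coordinates and flow heights of the endpoints of a short basic path then shows it cannot join distinct points, and in fact bounds $\rho(z,w)$ below; the converse inequality $d_r\le C\rho$ is easy, as one vertical plus one horizontal segment joins any two nearby points.

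Granting the local bi-Lipschitz equivalence $c\,\rho\le d_r\le C\rho$, I would deduce H\"older continuity as follows. First, $(\Sigma_r,d_r)$ has finite diameter (indeed $\diam(\Sigma,d)\le1$, and a vertical–horizontal–vertical concatenation joins any two points of $\Sigma_1$ by a bounded-length path), so it suffices to estimate $d_r(\sigma_r^t(x,0),\sigma_r^{t'}(x',0))$ when $\delta:=|t-t'|+d(x,x')$ is small. Split this via the triangle inequality into $d_r(\sigma_r^{t}(x,0),\sigma_r^{t'}(x,0))$ and $d_r(\sigma_r^{t'}(x,0),\sigma_r^{t'}(x',0))$. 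The first term is at most the length of a vertical segment joining two points of one $\psi$-orbit; since the time change induced by $\vartheta_r$ has speed $1/r\le1/\inf r$, this is $\le|t-t'|/\inf r$. For the second term, with $t'\in[-1,1]$ fixed, the integer $j$ with $\sigma_r^{t'}(x,0)=(\sigma^j x,\,t'-r_j(x))$ satisfies $|j|\le1/\inf r$ and, for $x'$ close enough to $x$, agrees with the corresponding integer for $x'$ up to an off-by-one that occurs only when $t'$ equals some $r_j(x)$ — a discrepancy that the near-roof identification in $\rho$ absorbs. Writing the two flowed points as $(\sigma^j x,h)$ and $(\sigma^j x',h')$, one has $d(\sigma^j x,\sigma^j x')\le e^{|j|}d(x,x')$ and $|h-h'|=|r_j(x)-r_j(x')|\le|j|\cdot\|r\|_\eta\cdot\bigl(\max_{|k|\le|j|}d(\sigma^k x,\sigma^k x')\bigr)^{\eta}$, where $\eta\in(0,1]$ and $\|r\|_\eta$ are a H\"older exponent and constant for $r$; both are $\lesssim d(x,x')^\eta$. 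Mapping by $\vartheta_r$ (which shifts heights by a further $\lesssim d(x,x')^\eta$, as $r$ is bounded below and $\eta$-H\"older) and joining the images by a vertical segment to equalize heights followed by a horizontal segment produces a basic path of length $\lesssim d(x,x')^\eta$. Hence $d_r(\sigma_r^t(x,0),\sigma_r^{t'}(x',0))\lesssim\delta^\eta$, the asserted H\"older continuity on $[-1,1]\times\Sigma$.

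For continuity of $\sigma_r^t:\Sigma_r\to\Sigma_r$ at a fixed $t$, I would run the same argument from a general point $(x,s)$, $0\le s<r(x)$, instead of $(x,0)$: if $(x',s')$ is $d_r$-close to $(x,s)$ then $c\,\rho\le d_r$ forces $x'$ to be $d$-close to $x$ (or to $\sigma x$, when $s$ is near $r(x)$) and $s'$ correspondingly close to $s$; flowing for time $|t|\le1$ crosses at most $1/\inf r$ roofs, and the one-vertical-plus-one-horizontal estimate shows $\sigma_r^t(x',s')$ is $d_r$-close to $\sigma_r^t(x,s)$. For arbitrary $t\in\R$, choose $N\in\N$ with $|t/N|\le1$ and use $\sigma_r^t=\sigma_r^{t/N}\circ\cdots\circ\sigma_r^{t/N}$ ($N$ factors), a composition of continuous maps.

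The step I expect to be the main obstacle is the local lower bound $\ell(\gamma)\ge c\,\rho(z,w)$ for short basic paths — equivalently, verifying that $d_r$ is genuinely a metric inducing the natural local product topology. Once that is established, the rest is bookkeeping: $r$ bounded below forces finitely many roof crossings in bounded time, $r$ being H\"older controls the height discrepancies along nearby orbits, and the fact that $\sigma^{\pm1}$ distorts $d$ by at most a factor $e$ controls the base discrepancies.
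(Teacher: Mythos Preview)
The paper does not prove this lemma; it simply cites \cite{Bowen-Walters-Metric,Lima-Sarig} and moves on. So there is no ``paper's own proof'' to compare against---the result is imported wholesale from the literature.

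That said, your sketch is essentially the argument one finds in those references. Bowen and Walters establish that $d_1$ is a metric and that the flow is continuous by exactly the mechanism you describe: analyzing short basic paths to get a local lower bound against the product-type metric, and building explicit short paths (vertical plus horizontal) for the upper bound. The H\"older statement on $[-1,1]\times\Sigma$ is the contribution of \cite{Lima-Sarig}, and your argument---bounded number of roof crossings since $\inf r>0$, H\"older control of $r_j(x)-r_j(x')$ from the H\"older continuity of $r$, and the at-most-$e$ expansion of $\sigma^{\pm1}$ on $(\Sigma,d)$---is the right one. Your identification of the main technical obstacle (the local lower bound $\ell(\gamma)\ge c\,\rho(z,w)$, including the near-roof identification) is accurate; this is precisely where the work lies in \cite{Bowen-Walters-Metric}. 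One small point: the off-by-one in the crossing count when $t'$ is exactly a partial Birkhoff sum $r_j(x)$ is genuinely handled by the suspension identification, as you say, but in writing it out carefully you should make explicit that the two candidate representations $(\sigma^j x,\,r(\sigma^j x))$ and $(\sigma^{j+1}x,\,0)$ map under $\vartheta_r$ to points at $d_1$-distance zero, so no extra error appears.
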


\subsection*{Roof functions independent of the past or future}


$r:\Sigma\to\R$ is {\em independent of the past} if  $r(x)=f(x_0,x_1,\ldots)$ for some function $f$, and
it is {\em independent of the future} if $r(x)=g(\ldots,x_{-1},x_0)$ for some function $g$ (note that we allow dependence on the zeroth coordinate).
The next lemma is an adaptation of \cite[Lemma 2]{Ratner-Flows-Bernoulli}. Let $\sigma_r:\Sigma_r\to\Sigma_r$ be a TMF
and $\mu$ be an ergodic $\sigma_r$--invariant probability measure.

\begin{lem}\label{LemmaOneSided}
$(\Sigma_r,\sigma_r,\mu)$ is isomorphic to a TMF with roof function independent of the past, and to a TMF with roof
function independent of the future. 
\end{lem}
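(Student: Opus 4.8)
The plan is to reduce the lemma to two standard facts: a cohomology argument replacing $r$ by a cohomologous roof function with the required dependence, and the general principle that cohomologous roof functions (via a bounded H\"older transfer function) yield flow-isomorphic topological Markov flows. I describe the assertion about a roof independent of the past; the other one is symmetric.

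\emph{Step 1: the new roof.} Since $r$ is H\"older, there are $\const>0$ and $\theta\in(0,1)$ with $|r(x)-r(y)|\le\const\,\theta^{n}$ whenever $x_i=y_i$ for all $|i|\le n$. Fix for each vertex $v$ a left--infinite path $q(v)=(\ldots,q_{-2}(v),q_{-1}(v))$ with $q_{-1}(v)\to v$, and fix a large integer $M$. For $x\in\Sigma$ let $\bar x$ be the point with $\bar x_i=x_i$ for $i\ge -M$ and $\bar x_{-M-1-s}=q_{-1-s}(x_{-M})$ for $s\ge 0$; this is a well-defined point of $\Sigma$ depending only on $x_{-M}^{\infty}$. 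Since $\sigma^{k}x$ and $\sigma^{k}\bar x$ agree on all coordinates $\ge -M-k$, the series
\[
\psi(x):=\sum_{k\ge 0}\bigl(r(\sigma^{k}x)-r(\sigma^{k}\bar x)\bigr)
\]
converges uniformly with $\|\psi\|_\infty\le\const\,\theta^{M}/(1-\theta)$, and the usual ``split at the middle index'' estimate shows $\psi$ is H\"older. Set $\bar r:=r-\psi+\psi\circ\sigma$. Then $\bar r$ is H\"older, and for $M$ large enough (so that $2\|\psi\|_\infty<\inf r$) it is bounded away from $0$ and from $\infty$. A rearrangement of the series yields
\[
\psi(x)-\psi(\sigma x)=r(x)-r(\bar x)+\sum_{m\ge 0}\bigl(r(\sigma^{m}\overline{\sigma x})-r(\sigma^{m+1}\bar x)\bigr),
\]
where $\overline{\sigma x}$ denotes the same construction applied to $\sigma x$; as $\bar x$ depends only on $x_{-M}^{\infty}$ and $\overline{\sigma x}$ only on $x_{-M+1}^{\infty}$, the right-hand side is $r(x)$ plus a function of $x_{-M}^{\infty}$. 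Hence $\bar r=r-(\psi-\psi\circ\sigma)$ depends only on $x_{-M}^{\infty}$. Finally I recode $\Sigma$ so that its new $0$-th coordinate records the block $(x_{-M},\ldots,x_{0})$: this is a H\"older conjugacy of the basis, it makes $\bar r$ a function of the coordinates $\hat x_{0},\hat x_{1},\ldots$ only (still H\"older, still bounded away from $0$ and $\infty$), and it induces a TMF isomorphism onto the TMF over the recoded basis with roof $\bar r$.

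\emph{Step 2: the flow isomorphism.} Define $\Phi\colon\Sigma_r\to\Sigma_{\bar r}$ by $\Phi(x,t):=\sigma_{\bar r}^{\,t-\psi(x)}(x,0)$. Using $\bar r(x)-r(x)=-\psi(x)+\psi(\sigma x)$ one checks that $\Phi$ is compatible with the identification $(x,r(x))\sim(\sigma x,0)$, that $\Phi\circ\sigma_r^{\tau}=\sigma_{\bar r}^{\tau}\circ\Phi$ for all $\tau\in\R$, and that $\Phi$ is a bijection with inverse $(y,s)\mapsto\sigma_r^{\,s+\psi(y)}(y,0)$. By Lemma \ref{Lemma-BW} the flows are continuous in time, so $\Phi$ is a homeomorphism for the Bowen--Walters metrics and hence a Borel isomorphism; therefore $\Phi_{*}\mu$ is an ergodic $\sigma_{\bar r}$--invariant Borel probability measure and $\Phi$ is an isomorphism of measure-preserving flows. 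Composing $\Phi$ with the recoding isomorphism of Step 1 exhibits $(\Sigma_r,\sigma_r,\mu)$ as isomorphic to a TMF with roof function independent of the past. The statement about a roof function independent of the future follows by the symmetric construction (freeze the far future of $x$ and sum over negative iterates of $\sigma$, or run the argument on the time-reversed TMF, whose basis is the TMS of the reversed graph).

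\emph{Main difficulty.} The delicate point --- and the reason one cannot simply quote \cite[Lemma 2]{Ratner-Flows-Bernoulli} --- is to produce a new roof that is simultaneously H\"older, bounded away from $0$ and $\infty$, and dependent on only finitely many past coordinates. Freezing the \emph{entire} past of $x$ would make $\bar r$ depend on $x_{0}^{\infty}$ with no recoding (the classical single-step reduction), but the transfer function need not then be small and positivity of $\bar r$ can fail; freezing only the far past at depth $M$ keeps $\|\psi\|_\infty\le\const\,\theta^{M}/(1-\theta)$ --- hence $\bar r$ uniformly close to $r$, in particular bounded away from $0$ --- at the price of dependence on the coordinates $x_{-M},\ldots,x_{-1}$, which a finite block recoding then absorbs. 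That all of these estimates ($\|\psi\|_\infty$, the H\"older norm of $\psi$, the bounds on $\bar r$) are uniform over the countable alphabet is automatic since $r$ is globally H\"older and bounded; the remaining points (the series rearrangement, the behaviour of $\Phi$ across the seams $t=r(x)$, and the transport of $\mu$) are routine.
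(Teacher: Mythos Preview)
Your argument is correct and in fact cleaner than the paper's. The paper follows the classical Sinai--Bowen route: it cites \cite{Sinai-Gibbs,Bowen-LNM,Daon} for a bounded H\"older $h^s$ with $r^s:=r-h^s+h^s\circ\sigma$ independent of the past, but since that $r^s$ may take negative values, the bulk of the paper's proof is a measure-theoretic induction argument (Kac's formula on a small cylinder, then recoding to the first-return system on a long word $[\un a]$) to force $0<h^s<\tfrac12 r$. This uses the ergodicity of $\mu$ and the assumption that $\mu$ does not sit on a periodic orbit, and yields only a measure-theoretic isomorphism.

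Your ``freeze the far past at depth $M$'' trick sidesteps all of that: by making $\|\psi\|_\infty\le\const\,\theta^{M}/(1-\theta)$ small you keep $\bar r$ uniformly close to $r$, so positivity is automatic; the price is dependence on $x_{-M}^{\infty}$ rather than $x_0^{\infty}$, which a single higher-block recoding absorbs. The upshot is a \emph{topological} conjugacy of TMFs, valid for every invariant measure simultaneously and without any appeal to ergodicity or Kac. The paper's approach has the minor advantage of avoiding the recoding step (its $r^s$ is literally a function of $x_0^\infty$ on the original alphabet), but your approach is more elementary and gives a stronger conclusion.
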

\begin{proof}
Let us prove the first statement (the second is proved similarly).
If $\mu$ is supported on a periodic orbit, then every function is independent of the past on the support of $\mu$.
Henceforth we assume that $\mu$ does not sit on a periodic orbit.

It is well-known that there is a bounded continuous function $h^s:\Sigma\to\R$ such that
 $r^s:=r-h^s+h^s\circ\sigma$ is bounded,  H\"older continuous and independent of the past.
Proofs for $\Sigma=\Sigma(\mathfs G)$ with $\mathfs G$ finite can be found in \cite{Sinai-Gibbs,Bowen-LNM}.
As noted in \cite{Daon}, these proofs extend without much difficulty to the case where $\mathfs G$ is countable.
Since the $r^s$ produced by the proofs may take negative values, we now explain how to change $r$ and
$h^s$ to have $r^s>0$.

\medskip
\noindent
{\sc Claim:} {\em It is possible to change $r,h^s$ s.t. $0<h^s<\frac{1}{2}r$. In particular $r^s>0$.}

\medskip
\noindent
{\em Proof.\/}
Since $h^s$ is bounded, we can add a large constant to get a new $h^s$ that is positive.
The other inequality is more complicated. Let $c=\sup (h^s)<\infty$, and
take $n_0\in\N$ with $c<\frac{1}{2}n_0\inf(r)$.
Let $\nu$ be the induced measure of $\mu$. Since $\mu$ is ergodic and does not sit on a periodic orbit,
$\nu$ is non-atomic, hence there is a cylinder $[\un{b}]$ s.t. $0<\nu[\un{b}]<\frac{1}{n_0}$.
Let $ \vf_{\un{b}}(x)=\inf\{n\geq 1:\sigma^n(x)\in [\un{b}]\}$.
By the Kac formula
$\frac{1}{\nu[\un{b}]}\int_{[\un{b}]}\vf_{\un{b}} d\nu>n_0.$
Thus there exists an admissible word $\un{a}=\un{b}\,\un{\xi}\,\un{b}$ s.t. $\nu[\un{a}]>0$ and $\vf_{\un{b}}\restriction_{[\un{a}]}>n_0$.

Recode the flow using the Poincar\'e section $[\un{a}]\x\{0\}$ to obtain a suspension flow with basis $\sigma^{\vf_{\un{a}}}:[\un{a}]\to [\un{a}]$
and roof function $R=r_{\vf_{\un a}}$, where $\vf_{\un{a}}(x)=\inf\{n\geq 1:\sigma^n(x)\in [\un{a}]\}$.
The map $\sigma^{\vf_{\un{a}}}:[\un{a}]\to [\un{a}]$ admits a countable Markov partition $$S:=\{[\un{a},\un{\xi},\un{a}]:\vf_{\un{a}}\upharpoonright_{[\un{a},\un{\xi},\un{a}]}=|\un{a}|+|\un{\xi}|\}\setminus\{\emptyset\}.$$ Coding with $S$, $\sigma^{\vf_{\un{a}}}:[\un{a}]\to [\un{a}]$ becomes a TMS, therefore the suspension flow is a TMF.
Under this new coding, $R^s:=R-h^s+h^s\circ\sigma^{\vf_{\un{a}}}$ is independent of the past and
H\"older continuous.
Note that $\vf_{\un{a}}\geq \vf_{\un{b}}>n_0\Rightarrow\inf R>n_0\inf(r)>2c\Rightarrow h^s<\frac{1}{2}R$.

\medskip
Henceforth we assume, without loss of generality, that $0<h^s<\frac{1}{2}r$ for the original flow. Then $r^s$ is bounded,  positive and {\em uniformly bounded away from zero}.
This allows us form the TMF $\sigma_{r^s}:\Sigma_{r^s}\to \Sigma_{r^s}$.
This TMF is isomorphic to $\sigma_r:\Sigma_r\to\Sigma_r$ via the conjugacy
$$
\vartheta_s(x,\xi)=\begin{cases}
(x,\xi-h^s(x)) & \text{, if }\xi\geq h^s(x)\\
(\sigma^{-1}(x),\xi+r(\sigma^{-1}(x))-h^s(\sigma^{-1}(x))) & \text{, if }0\leq \xi<h^s(x),
\end{cases}
$$
which  recodes $\Sigma_r$ using the Poincar\'e section
$
\{(x,h^s(x)):x\in\Sigma\}$.
\end{proof}

\subsection*{Strong manifolds and the Bowen-Marcus Cocycles \cite{Bowen-Marcus}}
 The   {\em strong stable} and {\em strong unstable manifolds of $(x,t)$} are:
\begin{enumerate}[$\circ$]\label{Page-strong-manifold}
\item $W^{ss}(x,t):=\{(y,s):d_r(\sigma_r^\tau(x,t),\sigma_r^\tau(y,s))\xrightarrow[\tau\to\infty]{}0\}$.
\item $W^{su}(x,t):=\{(y,s):d_r(\sigma_r^{-\tau}(x,t),\sigma_r^{-\tau}(y,s))\xrightarrow[\tau\to\infty]{}0\}$.
\end{enumerate}
These are not manifolds, but they play the same role as their smooth analogues in hyperbolic dynamics.

To calculate $W^{ss}, W^{su}$  we make the following definitions. Assume $x$ is not pre-periodic (i.e. there are no $m,n$ s.t. $x_m^\infty$ or $x_{-\infty}^n$ is a periodic sequence).
Let $W^{ws}(x):=\{y\in\Sigma:\exists m,n\textrm{ s.t. }y_m^\infty=x_n^\infty\}$ and define
$P^s(x,\cdot):W^{ws}(x)\to\R$ by
$
P^s(x,y):=\lim\limits_{k\to\infty} [r_{m+k}(y)-r_{n+k}(x)]\textrm{ for some (every)  $m,n$ s.t. $y_m^\infty=x_n^\infty$}
$.\label{Page-Weak-(un)stable}
Similarly,
let $W^{wu}(x):=\{y\in\Sigma:\!\exists m,n\textrm{ s.t. }y_{-\infty}^{m}=x_{-\infty}^{n}\}$, and set
$P^u(x,\cdot)\!:\!W^{wu}(x)\to\R$ by
$
P^u(x,y):=\lim\limits_{k\to-\infty} [r_{m+k}(y)-r_{n+k}(x)]\textrm{ for some (every) $m,n$ s.t. $y_{-\infty}^m=x_{-\infty}^n$}
$.

These definitions are independent of the choice of $m,n$, because in the non-pre-periodic case any two possible pairs $(m,n), (m',n')$ satisfy $m'=m+k_0, n'=n+k_0$ for some $k_0\in\Z$.
The limits which define $P^\tau(\cdot,\cdot)$ exist because they are the limits of the partial sums of the  series
$r_m(y)-r_n(x)+\sum_{k=0}^\infty[r(\sigma^{m+ k}(y))-r(\sigma^{n+ k}(x))]$ $(\tau=s)$ or
$r_m(y)-r_n(x)-\sum_{k=1}^\infty [r(\sigma^{m-k}(y))-r(\sigma^{n-k}(x))]$ $(\tau=u)$.
Since $r$ is H\"older continuous, the summands decay exponentially fast, and these series converge.

\begin{lem}[\cite{Bowen-Marcus}]\label{Lemma-P}
Suppose $x$ is not pre-periodic, then for  $\tau=s,u$ it holds:
\begin{enumerate}[$(1)$]
\item {\sc Bowen-Marcus condition:} $(y,s)\in W^{s\tau}(x,t)$ iff $y\in W^{w\tau}(x)$ and
$s-t=P^\tau(x,y)$.
\item {\sc Shift identity:} $P^\tau(\sigma x,\sigma y)-P^\tau(x,y)=r(x)-r(y)$  wherever defined.
\item {\sc Cocycle equation:} For all  $y,z\in W^{w\tau}(x)$,
$
P^\tau(x,y)+P^\tau(y,z)=P^\tau(x,z).
$
In particular, $P^\tau(x,x)=0$ and $P^\tau(x,y)=-P^\tau(y,x)$.
\item {\sc H\"older property:} There are $C>0$, $0<\alpha<1$ s.t. $|P^\tau(x,y)|\leq C d(x,y)^\alpha$
for all $y\in W^\tau_{\loc}(x):=\{y\in\Sigma:y_0^\infty=x_0^\infty\}$.
\end{enumerate}
\end{lem}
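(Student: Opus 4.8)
The plan is to prove all four items for $\tau=s$; the case $\tau=u$ then follows by reversing time (the time--reversed flow is a TMF over $\sigma^{-1}$, and $P^u$ for $\sigma$ plays the role of $P^s$ for $\sigma^{-1}$). I will use throughout that if $x$ is not pre-periodic then so are $\sigma^{\pm1}x$ and every point sharing a tail with $x$, so that all the cocycles below are defined, together with the series representation
$$P^s(x,y)=r_m(y)-r_n(x)+\sum_{k\ge0}\bigl[r(\sigma^{m+k}y)-r(\sigma^{n+k}x)\bigr]\qquad(y_m^\infty=x_n^\infty),$$
whose tail decays exponentially, as already observed in the text.

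Items (2)--(4) are direct computations with this representation. For (4) take $m=n=0$, so the prefix $r_m(y)-r_n(x)$ vanishes; since $y_0^\infty=x_0^\infty$ one has $d(\sigma^kx,\sigma^ky)=e^{-k}d(x,y)$, and summing $|r(\sigma^ky)-r(\sigma^kx)|\le C'e^{-\alpha k}d(x,y)^\alpha$ (with $\alpha,C'$ the H\"older exponent and constant of $r$) gives the bound. For (3), given $y,z\in W^{ws}(x)$ pick, using the freedom in the indices, a common far tail $x_a^\infty=y_b^\infty=z_c^\infty$; then $P^s(x,y)$, $P^s(y,z)$, $P^s(x,z)$ are the limits of $r_{b+k}(y)-r_{a+k}(x)$, $r_{c+k}(z)-r_{b+k}(y)$, $r_{c+k}(z)-r_{a+k}(x)$, and the first two add to the third, while $z=x$ and $y=z=x$ give $P^s(x,x)=0$ and antisymmetry. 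For (2), from $y_m^\infty=x_n^\infty$ we get $(\sigma y)_{m-1}^\infty=(\sigma x)_{n-1}^\infty$, and the cocycle identity $r_j(\sigma w)=r_{j+1}(w)-r(w)$ gives $r_{(m-1)+k}(\sigma y)-r_{(n-1)+k}(\sigma x)=[r_{m+k}(y)-r(y)]-[r_{n+k}(x)-r(x)]$; let $k\to\infty$.

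The nontrivial point is (1). For the implication $\Leftarrow$, assume $y_m^\infty=x_n^\infty$ and $s-t=P^s(x,y)$, and flow forward: $\sigma_r^\tau(x,t)=(\sigma^Nx,a)$ with $a=t+\tau-r_N(x)$ and $N=N(\tau)\to\infty$. Put $M:=N+m-n$; then $\sigma^My$ and $\sigma^Nx$ agree on all coordinates $\ge n-N$, so $d(\sigma^My,\sigma^Nx)\to0$, while the height of $\sigma_r^\tau(y,s)$ equals $s+\tau-r_M(y)=a+(s-t)-[r_{N+m-n}(y)-r_N(x)]\to a$ because $r_{N+m-n}(y)-r_N(x)\to P^s(x,y)=s-t$. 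Hence for large $\tau$ the two flowed points have nearby symbolic bases and nearby heights, so by Lemma \ref{Lemma-BW} and the construction of the Bowen--Walters metric $d_r(\sigma_r^\tau(x,t),\sigma_r^\tau(y,s))\to0$; the times where $a$ comes within $o(1)$ of $0$ or of $r(\sigma^Nx)$ are absorbed by the wraparound identification and need only a routine check. For the implication $\Rightarrow$, assume $(y,s)\in W^{ss}(x,t)$, write $\sigma_r^\tau(x,t)=(\sigma^{N(\tau)}x,a(\tau))$, $\sigma_r^\tau(y,s)=(\sigma^{M(\tau)}y,b(\tau))$ with $N,M\to\infty$, and for each large integer $N$ pick $\tau=\tau(N)$ in the interval over which the $x$--orbit sits above $\sigma^Nx$, with $a(\tau(N))=\tfrac12 r(\sigma^Nx)$, so the heights stay uniformly away from $0$ and the roof and $\tau(N)\to\infty$. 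From $d_r\to0$ I would read off: (i) the symbolic bases agree on ever longer central windows, $x_{N+j}=y_{M(N)+j}$ for $|j|\le L(N)$ with $L(N)\to\infty$ and $M(N):=M(\tau(N))$; and (ii) $|a(\tau(N))-b(\tau(N))|\to0$, hence $r_{M(N)}(y)-r_N(x)\to s-t$. Next I would show $N-M(N)$ is eventually constant: with $\Delta:=M(N+1)-M(N)$, subtracting (ii) at $N$ from (ii) at $N+1$ gives $r_{M(N+1)}(y)-r_{M(N)}(y)=r(\sigma^Nx)+o(1)\ge\inf r-o(1)$, so $\Delta\ge1$; and if $\Delta\ge2$ then $r_{M(N+1)}(y)-r_{M(N)}(y)\ge r(\sigma^{M(N)}y)+\inf r$ while, by (i) and H\"older continuity, $r(\sigma^{M(N)}y)=r(\sigma^Nx)+o(1)$, forcing $\inf r\le o(1)$ -- impossible for large $N$. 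So $\Delta=1$ eventually, $M(N)=N-n_0$ for a fixed $n_0$ and all large $N$; then (i) with $j=0$ gives $x_i=y_{i-n_0}$ for all large $i$, so $y\in W^{ws}(x)$ with admissible indices $(m,m+n_0)$, and $r_{M(N)}(y)-r_N(x)=r_{N-n_0}(y)-r_N(x)$ is exactly the sequence whose limit defines $P^s(x,y)$; by (ii), $P^s(x,y)=s-t$.

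The main obstacle is the extraction of (i) and (ii) from ``$d_r(\sigma_r^\tau(x,t),\sigma_r^\tau(y,s))\to0$'': converting $d_r$--smallness into simultaneous control of the symbolic base (agreement on a long window of coordinates) and of the normalized fibre heights is precisely the quantitative content underlying Lemma \ref{Lemma-BW}, and it is the only place where the geometry of the Bowen--Walters metric is genuinely needed; everything after that is bookkeeping. It is worth noting that the offset--stabilization step uses only that $r$ is bounded away from zero, whereas the non--pre-periodicity of $x$ enters solely through the well-posedness of $P^s$ and $P^u$.
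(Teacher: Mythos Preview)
The paper does not give its own proof of this lemma; it is stated with a citation to Bowen--Marcus and used as a black box. Your argument is correct and fills in what the paper omits. Parts (2)--(4) are indeed direct computations from the series representation, and your treatment of (1) is sound: in the forward direction you correctly match the flowed points by setting $M=N+m-n$ and show the base and height discrepancies vanish; in the backward direction, choosing $\tau(N)$ to keep the $x$--height centered, extracting base-agreement (i) and height-agreement (ii) from $d_r\to 0$, and then pinning down $M(N+1)-M(N)=1$ for all large $N$ by comparing consecutive $N$ (using $\inf r>0$ and the continuity of $r$) is exactly right. The one place you flag as the ``main obstacle'' --- reading off (i) and (ii) from $d_r$--smallness --- is genuine but elementary once stated precisely: for points in $\Sigma_r$ whose normalized heights are bounded away from $0$ and $1$, any short basic path in the Bowen--Walters metric cannot wrap over the roof, so small $d_r$ forces the bases to be close in $\Sigma$ and the heights to be close in $\R$. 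With that lemma in hand, your bookkeeping is complete.
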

\noindent
$P^s(\cdot,\cdot), P^u(\cdot,\cdot)$ are called the {\em Bowen-Marcus cocycles}.

\section{Equilibrium measures for Topological Markov Flows}
\subsection*{Equilibrium measures}
Let $\sigma_r:\Sigma_r\to\Sigma_r$ be a TMF, and let $\Phi:\Sigma_r\to\R$ be bounded and continuous.
The {\em (variational) topological pressure} of $\Phi$ is
$$
P_{\rm top}(\Phi):=\sup\left\{h_\mu(\sigma_r^1)+\int \Phi d\mu: \mu\text{ is $\sigma_r$--invariant Borel probability measure}\right\}.
$$

\medskip
\noindent
{\sc Equilibrium measure:} $\mu$ is called an {\em equilibrium measure} (for the potential $\Phi$
and the flow $\{\sigma_r\}$) if $h_\mu(\sigma_r^1)+\int \Phi d\mu=P_{\rm top}(\Phi)$.

\medskip
In this section, we will describe the equilibrium measures when $\Sigma$ is topologically mixing,
$\Phi$ is bounded and H\"older continuous, and $P_{\rm top}(\Phi)<\infty$.
Instead of describing them directly, we describe the {one-sided version} of their {induced measures}.
Let $\mu$ be a $\sigma_r$--invariant probability measure, and let $\nu$ be its induced measure.
$\nu$ is a $\sigma$--invariant probability measure on $\Sigma$.

\medskip
\noindent
{\sc One-sided TMS:} Let $\pi_s:x\in\Sigma\mapsto (x_0,x_1,\ldots)$. The {\em one-sided TMS}
is the discrete-time topological dynamical system $\sigma_s:\Sigma^s\to\Sigma^s$ where
$$
\Sigma^s=\{\pi_s(x):x\in\Sigma\}
$$
and $\sigma_s:\{x_i\}_{i\geq 0}\mapsto \{x_{i+1}\}_{i\geq 0}$ is the {\em one-sided left shift}.

\medskip
\noindent
{\sc One-sided version of $\nu$:}
The {\em one-sided version} of $\nu$ is the probability measure $\nu^s:=\nu\circ\pi_s^{-1}$.
$\nu^s$ is a $\sigma_s$--invariant probability measure on $\Sigma^s$.


\medskip
$\nu^s$ determines $\nu$ since $\nu\circ\sigma^{-1}=\nu$, and $\nu$ determines $\mu$.
Here is the description of $\nu^s$.

\begin{thm}\label{Thm-RPF} Let $\sigma_r:\Sigma_r\to\Sigma_r$ be a topologically transitive TMF and
 $\Phi:\Sigma_r\to\R$ be bounded and H\"older continuous with $P_{\rm top}(\Phi)<\infty$.
Let $\mu$ be an equilibrium measure for $\Phi$, and $\nu$ its induced measure.
Then the one-sided version of $\nu$ has the form $\nu^s=h^s \xi^s$, where:
\begin{enumerate}[$(1)$]
\item $h^s$ is a positive function on $\Sigma^s$, and $\xi^s$ is a positive measure on $\Sigma^s$.
\item There is $\phi^s:\Sigma^s\to\R$ bounded H\"older continuous with $P_{\rm top}(\phi^s)<\infty$
s.t. $Lh^s=\lambda h^s$ and $L^\ast\xi^s=\lambda \xi^s$, where $\lambda=\exp[P_{\rm top}(\phi^s)]$ and
$L$ is the Ruelle operator of $\phi^s$,
$(Lf)(x_0^\infty)=\sum_{\sigma_s(y_0^\infty)=x_0^\infty}\exp[{\phi^s(y_0^\infty)}]f(y_0^\infty)$ for all $f:\Sigma^s\to\R$.
\item $h^s(x)=\lim\limits_{n\to\infty}\frac{1}{\xi^s[\un{a}]}\l^{-n}(L^n 1_{[\un{a}]})(x)$ for every cylinder  $[\un{a}]$ and  $x\in\Sigma^s$. 
\item $\log h^s$ is uniformly H\"older continuous on cylinders of length one at the zeroth position.
\item $\nu^s$ is ergodic.
\end{enumerate}
\end{thm}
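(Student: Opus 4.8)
The plan is to push the problem from the flow $\sigma_r$ down to its base $\sigma$, then to a one--sided countable Markov shift, where the Ruelle--Perron--Frobenius (RPF) theory of \cite{Buzzi-Sarig} applies. \emph{Step 1 (from the flow to the base).} Set $c:=P_{\rm top}(\Phi)$ and $\Delta_\Phi(x):=\int_0^{r(x)}\Phi(x,t)\,dt$. The formula defining the induced measure is a bijection $\nu\leftrightarrow\mu$ between $\sigma$--invariant and $\sigma_r$--invariant probability measures, and by Abramov's formula together with the standard relation between the pressure of a suspension flow and that of its base (see \cite{Bowen-Ruelle-SRB}) one has $h_\mu(\sigma_r^1)+\int\Phi\,d\mu=(h_\nu(\sigma)+\int\Delta_\Phi\,d\nu)/\int r\,d\nu$ for every such pair. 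Since $r$ is bounded away from $0$ and $\infty$, this identity and $P_{\rm top}(\Phi)=c$ are equivalent to saying that $\psi:=\Delta_\Phi-cr$ has zero pressure with respect to $\sigma$, and that $\mu$ is an equilibrium measure of $\Phi$ precisely when $\nu$ is an equilibrium measure of $\psi$; in particular $\nu$ is an equilibrium measure of $\psi$ and $P_{\rm top}(\psi)=0$. Moreover $\psi$ is bounded (because $\Phi$ and $r$ are) and H\"older on $\Sigma$ with a finite H\"older seminorm: $r$ is H\"older, and by Lemma \ref{Lemma-BW} the map $(t,x)\mapsto\sigma_r^t(x)$ is H\"older on $[-1,1]\times\Sigma$, so composing it finitely many times to cover times in $[0,\sup r]$ and writing $\Delta_\Phi(x)-\Delta_\Phi(y)$ as $\int_0^{\min\{r(x),r(y)\}}(\Phi(x,t)-\Phi(y,t))\,dt$ plus a boundary term of size $\le\|\Phi\|_\infty|r(x)-r(y)|$ gives the estimate. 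Finally $h_\mu(\sigma_r^1)\le c-\inf\Phi<\infty$, hence $h_\nu(\sigma)=h_\mu(\sigma_r^1)\int r\,d\nu<\infty$.

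\emph{Step 2 (from two--sided to one--sided).} By the Sinai--Bowen cohomology argument already used in the proof of Lemma \ref{LemmaOneSided} (see \cite{Sinai-Gibbs,Bowen-LNM,Daon}), there are a bounded H\"older $g:\Sigma\to\R$ and a bounded H\"older $\phi^s:\Sigma^s\to\R$ with $\psi=\phi^s\circ\pi_s+g-g\circ\sigma$; here $\phi^s\circ\pi_s$ is bounded since $\psi$ and $g$ are. Cohomologous potentials have equal pressure and equal equilibrium measures, and since $(\Sigma,\sigma)$ is the natural extension of $(\Sigma^s,\sigma_s)$, the canonical bijection $m\mapsto m\circ\pi_s^{-1}$ between invariant probability measures preserves entropy and the integral of a past--independent potential. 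Hence $P_{\rm top}(\phi^s)=P_{\rm top}(\psi)=0$, and $\nu^s:=\nu\circ\pi_s^{-1}$ is an equilibrium measure of $\phi^s$ on $\Sigma^s$ with $h_{\nu^s}(\sigma_s)=h_\nu(\sigma)<\infty$ and $\int\phi^s\,d\nu^s>-\infty$. Put $\lambda:=\exp[P_{\rm top}(\phi^s)]$ (which equals $1$ for this choice of $\phi^s$).

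\emph{Step 3 (RPF on $\Sigma^s$ and regularity).} As $\Sigma$ is topologically transitive, so is $\Sigma^s$, and $\phi^s$ is bounded and H\"older with $P_{\rm top}(\phi^s)<\infty$. Because $\phi^s$ possesses an equilibrium measure of finite entropy satisfying $\int\phi^s\,d\nu^s>-\infty$, the thermodynamic formalism of \cite{Buzzi-Sarig} shows that $\phi^s$ is positive recurrent, and the generalized RPF theorem then provides a positive Borel function $h^s$ and a positive ($\sigma$--finite) measure $\xi^s$ on $\Sigma^s$ with $Lh^s=\lambda h^s$, $L^\ast\xi^s=\lambda\xi^s$, the convergence $\lambda^{-n}(L^n1_{[\un a]})(x)\to\xi^s[\un a]\,h^s(x)$ (after normalizing $\int h^s\,d\xi^s=1$), and the fact that $h^s\xi^s$ is the unique equilibrium measure of $\phi^s$, conservative and ergodic (exact if $\Sigma$ is topologically mixing). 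By uniqueness $\nu^s=h^s\xi^s$, which gives (1), (2), (3) and (5). For (4): if $x,y$ lie in a common length--one cylinder $[a]$, then for $n>|\un a|$ the $\sigma_s^n$--preimages of $x$ and of $y$ inside $[\un a]$ are matched by their first $n$ symbols, and for paired preimages $z,z'$ the bound on $\phi^s$ gives $|\phi^s_n(z)-\phi^s_n(z')|\le C\sum_{j\ge1}e^{-\beta j}\,d(x,y)^\beta=:C'd(x,y)^\beta$ with $C',\beta$ independent of $a,n,x,y$ (here $\phi^s_n=\sum_{j=0}^{n-1}\phi^s\circ\sigma_s^j$); dividing the two sums defining $(L^n1_{[\un a]})(x)$ and $(L^n1_{[\un a]})(y)$ and letting $n\to\infty$ in the formula of (3), where the factor $\xi^s[\un a]\lambda^{-n}$ cancels, yields $|\log h^s(x)-\log h^s(y)|\le C'd(x,y)^\beta$, uniformly over $a$.

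\emph{Main obstacle.} The only genuinely infinite--alphabet input is the implication \emph{equilibrium measure $\Rightarrow$ positive recurrent $\Rightarrow$ RPF measure} of Step 3, and the finiteness $h_\nu(\sigma)<\infty$ obtained in Step 1 is exactly what is needed to apply \cite{Buzzi-Sarig}. The principal technical point is the claim in Step 1 that $\Delta_\Phi$, and hence $\phi^s$, is H\"older with a \emph{single} constant valid on all of $\Sigma$: this requires controlling the Bowen--Walters metric and the H\"older flow map of Lemma \ref{Lemma-BW} over a time interval of length $\sup r$ with constants that do not depend on the (countably many) symbols, and it is precisely this uniformity that makes the distortion estimate in Step 3 uniform and yields (4).
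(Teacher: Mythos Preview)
Your proposal is correct and follows essentially the same route as the paper: reduce from the flow to the base via Abramov's formula (your Step~1 $=$ their Claim~1), pass to a one--sided potential via the Sinai--Bowen cohomology trick and natural extensions (your Step~2 $=$ their Claim~2 plus the paragraph after it), and then invoke \cite{Buzzi-Sarig} for positive recurrence and the RPF structure (your Step~3 $=$ their final paragraph). The only cosmetic differences are that you spell out the H\"older estimate for $\Delta_\Phi$ and the finite--entropy check more explicitly, and you give a self--contained distortion argument for (4), whereas the paper simply says (4) follows from (3) and the H\"older continuity of $\phi^s$.
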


\begin{proof}
Bowen and Ruelle proved the theorem in  \cite{Bowen-Ruelle-SRB} for TMF built from finite graphs, using Ruelle's Perron-Frobenius Theorem \cite{Lanford-Ruelle,Bowen-LNM,Ruelle-TDF-book}. Since Ruelle's Perron-Frobenius Theorem
is false for general infinite graphs, we sketch the modifications needed to treat our case.

\medskip
\noindent
{\sc Claim 1.\/} {\em $\nu$ is an equilibrium measure for
$
\phi(x):=\int_0^{r(x)}\Phi(x,t)dt-P_{\rm top}(\Phi)r(x).
$
$\phi:\Sigma\to\R$ is bounded H\"older continuous with $P_{\rm top}(\phi)=0$.}

\medskip
\noindent
{\em Proof.\/} This is proved exactly as in \cite{Bowen-Ruelle-SRB}. $\phi$ is clearly bounded
H\"older continuous. By the Abramov entropy formula \cite{Abramov-Entropy-Flows},
$h_{\mu}(\sigma_r)=\frac{1}{\int r d\nu}h_{\nu}(\sigma)$. Hence
$\frac{h_{\nu}(\sigma)+\int_\Sigma\int_0^{r(x)}\Phi(x,t)dtd\nu(x)}{\int r d\nu}\leq P_{\rm top}(\Phi)$,
with equality iff $\mu$ is an equilibrium measure for $\Phi$. This can be rewritten as
$h_{\nu}(\sigma)+\int_\Sigma \phi(x)d\nu(x)\leq 0$,
with equality iff $\nu$ is an equilibrium measure for $\phi$. Therefore $P_{\rm top}(\phi)=0$, and
$\mu$ is an equilibrium measure for $\Phi$ iff its induced measure $\nu$ is an equilibrium measure for $\phi$.

\medskip
\noindent
{\sc Claim 2.\/} {\em $\nu$ is an equilibrium measure for a bounded H\"older continuous
potential that is independent of the past and has zero pressure.}

\medskip
\noindent
{\em Proof.\/}  By \cite{Sinai-Gibbs,Bowen-LNM,Daon} there is a  bounded H\"older continuous function
$v:\Sigma\to\R$ s.t.
$
\phi+v-v\circ\sigma
$
is independent of the past. Since $\int (v-v\circ\sigma)dm=0$ for every $\sigma$--invariant probability
measure $m$, $P_{\rm top}(\phi+v-v\circ\sigma)=P_{\rm top}(\phi)=0$.

\medskip
Now we proceed to the proof of Theorem \ref{Thm-RPF}. By claims 1--2, there is $\phi^s:\Sigma^s\to\R$
bounded H\"older continuous s.t. $\phi^s\circ\pi_s=\phi+v-v\circ\sigma$, $\nu$ is an equilibrium
measure for $\phi^s\circ\pi_s$, and $P_{\rm top}(\phi^s\circ\pi_s)=0$. We want to conclude that
$\nu^s$ is an equilibrium measure for $\phi^s$, and that $P_{\rm top}(\phi^s)=0$.

If $\nu$ is a $\sigma$--invariant probability measure then $(\Sigma,\nu,\sigma)$ is the natural extension of
$(\Sigma^s,\nu^s,\sigma_s)$. Conversely, if $\nu^s$ is a $\sigma_s$--invariant probability measure
then it is the one-sided version of some $\sigma$--invariant probability measure $\nu$ (its natural extension).
Since natural extensions preserve entropy, $P_{\rm top}(\phi^s)=P_{\rm top}(\phi^s\circ\pi_s)=0$, and
$\nu$ is an equilibrium measure for $\phi^s\circ\pi_s$ iff $\nu^s$ is an equilibrium measure for $\phi^s$.

The structure of equilibrium measures for H\"older continuous potentials on one-sided TMS was determined in \cite{Buzzi-Sarig}. There it is shown that if $\Sigma^s$ is topologically mixing (a consequence of the topological mixing of $\Sigma$),
then $\phi^s$ is positive recurrent in the sense of \cite{Sarig-Null-Recurrent}, and parts (1)--(3) of the theorem hold.
Also, if the equilibrium measure exists then it is unique \cite[Thm 1.1]{Buzzi-Sarig}, and this gives part (5).
Part (4) follows from part (3) and the boundedness and H\"older continuity of $\phi^s$.
\end{proof}

\begin{cor}\label{Cor-Eq-are-Ergodic}
Suppose $\Sigma_r$ is a topologically transitive TMF, and $\Phi$ is a bounded H\"older continuous potential with finite pressure. Then $\Phi$ has at most one equilibrium measure and if this measure exists then it is ergodic.
\end{cor}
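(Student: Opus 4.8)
The plan is to extract everything from Theorem \ref{Thm-RPF}, whose hypotheses are exactly those of the corollary.

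\emph{Ergodicity.} Let $\mu$ be an equilibrium measure for $\Phi$ and let $\nu$ be its induced measure. By Theorem \ref{Thm-RPF}(5), the one-sided version $\nu^s$ of $\nu$ is ergodic. Since $(\Sigma,\sigma,\nu)$ is the natural extension of $(\Sigma^s,\sigma_s,\nu^s)$ and natural extensions preserve ergodicity, $\nu$ is ergodic; and since a $\sigma_r$--invariant measure is ergodic if and only if its induced measure is (as recorded above), $\mu$ is ergodic. If one is uneasy about the transitive-versus-mixing gap in the proof of Theorem \ref{Thm-RPF}, one first passes, via the spectral decomposition and the recoding $\Sigma\rightsquigarrow\Sigma_0$, $r\rightsquigarrow r_p$ described on p.~\pageref{TMF-mixing-basis}, to an isomorphic TMF over a topologically mixing base; this recoding carries $\Phi$ to a bounded H\"older potential with the same pressure and sets up a bijection between equilibrium measures that preserves ergodicity, so no generality is lost.

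\emph{Uniqueness.} I would argue by contradiction, using that equilibrium measures have finite entropy: from $h_\mu(\sigma_r^1)+\int\Phi\,d\mu=P_{\rm top}(\Phi)<\infty$ and $\int\Phi\,d\mu\geq-\|\Phi\|_\infty$ we get $h_\mu(\sigma_r^1)\leq P_{\rm top}(\Phi)+\|\Phi\|_\infty<\infty$. Hence, if $\mu_1\neq\mu_2$ are two equilibrium measures, then $\mu:=\tfrac12(\mu_1+\mu_2)$ satisfies, by affinity of $m\mapsto h_m(\sigma_r^1)$ and linearity of $m\mapsto\int\Phi\,dm$,
\[
h_\mu(\sigma_r^1)+\int\Phi\,d\mu=\tfrac12\sum_{i=1}^2\Big(h_{\mu_i}(\sigma_r^1)+\int\Phi\,d\mu_i\Big)=P_{\rm top}(\Phi),
\]
so $\mu$ is again an equilibrium measure, and therefore ergodic by the first part. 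But $\mu_1\leq 2\mu$, so $\mu_1\ll\mu$, and a $\sigma_r$--invariant probability measure absolutely continuous with respect to an ergodic $\sigma_r$--invariant probability measure must coincide with it; thus $\mu_1=\mu$, and likewise $\mu_2=\mu$, contradicting $\mu_1\neq\mu_2$. This simultaneously shows that an equilibrium measure, if it exists, is unique.

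There is no genuine obstacle here: the substance is already in Theorem \ref{Thm-RPF} (ultimately in the positive recurrence and uniqueness input from \cite{Buzzi-Sarig}). The only points needing the usual routine care are (a) the affinity of the Kolmogorov--Sinai entropy of the time--one map $\sigma_r^1$ as a function of the invariant measure, and (b) the standard fact invoked above that flow-invariance plus absolute continuity with respect to an ergodic flow-invariant measure forces equality — proved by showing, via Fubini applied to $(t,x)\mapsto (d\mu_1/d\mu)(\sigma_r^{-t}x)$, that the Radon--Nikodym derivative is flow-invariant modulo null sets, hence constant by ergodicity.
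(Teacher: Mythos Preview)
Your proof is correct and follows essentially the same approach as the paper: ergodicity via Theorem \ref{Thm-RPF}(5) lifted through the natural extension and the induced-measure correspondence, and uniqueness by averaging two putative equilibrium measures to obtain a non-ergodic equilibrium measure. You supply more detail than the paper does (finite entropy to justify affinity, the absolute-continuity argument in place of the paper's terse ``their average would have been non-ergodic''), but the strategy is identical.
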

\begin{proof}
By Theorem \ref{Thm-RPF} $\nu^s$ is ergodic. Therefore its natural extension $\nu$ is ergodic. If the induced measure is ergodic, then the original measure is ergodic. It follows that every equilibrium measure is ergodic. This implies that the equilibrium measure is unique: if there were two equilibrium measures, then  their average would have been a non-ergodic equilibrium measure.
\end{proof}

\subsection*{Conditional measures of the induced measure} Theorem \ref{Thm-RPF} can be used to construct  the conditional measures  $\nu(\cdot|x_0^\infty)$ for all, rather than almost all, $x\in\Sigma^s$.
The basic tool is the   {\em $g$--function} of $\nu$. This is the function $g:\Sigma^s\to\R$ given by
$$
g:=\frac{e^{\phi^s} h^s}{\lambda h^s\circ\sigma}=\frac{d\nu^s}{d(\nu^s\circ\sigma_s)}\cdot
$$
The reader can check that $g>0$ and $\sum_{\sigma_s(y_0^\infty)=x_0^\infty}g(y_0^\infty)=1$, whence  $0<g\leq 1$. Thus $g$ is a $g$--function in the sense of \cite{Keane-g-measures}. The function $\log g$ is bounded and uniformly H\"older continuous on cylinders of length two, since $\exp \phi^s$, $\log h^s$ are bounded and uniformly  H\"older continuous on
cylinders of length one at the zeroth position.

\begin{thm}[\cite{Ledrappier-Principe-Variationnel}] Let $\nu$, $\nu^s$, $L$ as in Theorem \ref{Thm-RPF}.
\begin{enumerate}[$(1)$]
\item If $f\in L^1(\nu^s)$ then $\E_{\nu^s}(f|x_1^\infty)=\sum_{\sigma_s(y_0^\infty)=x_1^\infty}g(y_0^\infty)f(y_0^\infty)$ $\nu^s$--a.e.
\item ${\nu^s}(x_0|x_{1},x_{2},\ldots):=\lim\limits_{k\to\infty}\nu([x_0]|{_1[}x_{1},\ldots,x_{k}])=g(x_0^\infty)$ $\nu^s$--a.e.
\item $\lim\limits_{k\to\infty}\nu({_{-n}[}x_{-n},\ldots,x_{-1}]|{_0[}x_{0},\ldots,x_{k}])$ is equal $\nu$--a.e. to
 \begin{equation}\label{g-function}
\nu(x_{-n},\ldots,x_{-1}|x_{0}^{\infty}):=g_n(x_{-n}^\infty):=g(x_{-n}^\infty) g(x_{-n+1}^\infty)\cdots g(x_{-1}^\infty).
\end{equation}
\end{enumerate}
\end{thm}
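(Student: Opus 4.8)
The plan is to derive all three formulas from a single fact about the one-sided system $(\Sigma^s,\nu^s,\sigma_s)$: the \emph{normalized} transfer operator of the $g$--function computes conditional expectations of $\nu^s$ onto the tail $\sigma$--algebras $\sigma_s^{-k}\mathfs B(\Sigma^s)$, after which Parts (2)--(3) fall out by combining this with the increasing (Lévy) martingale convergence theorem and the shift--invariance of $\nu$.

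\textbf{Part (1).} Introduce the operator $L_g f(x_0^\infty):=\sum_{\sigma_s(y_0^\infty)=x_0^\infty}g(y_0^\infty)f(y_0^\infty)$. Since $g=e^{\phi^s}h^s/(\lambda\,h^s\circ\sigma_s)$ one has $L_g f=\tfrac1{\lambda h^s}L(h^s f)$, so the relations $Lh^s=\lambda h^s$ and $L^\ast\xi^s=\lambda\xi^s$ from Theorem \ref{Thm-RPF} immediately give $\sum_{\sigma_s(y)=x}g(y)=1$ (hence $0<g\le1$, and $L_g$ is a positive $L^1(\nu^s)$--contraction with $\int L_g f\,d\nu^s=\int f\,d\nu^s$), together with the duality $\int(\psi\circ\sigma_s)f\,d\nu^s=\int\psi\,L_g f\,d\nu^s$ for bounded $\psi,f$, which follows from $L_g\big((\psi\circ\sigma_s)f\big)=\psi\cdot L_g f$. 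Every $\sigma_s^{-1}\mathfs B(\Sigma^s)$--measurable function has the form $\psi\circ\sigma_s$, so the duality — read against such test functions and combined with $\sigma_s$--invariance of $\nu^s$ — says exactly that $x\mapsto L_g f(x_1^\infty)$ is a version of $\E_{\nu^s}(f\mid x_1^\infty)$ for bounded $f$. Passing to $f\in L^1(\nu^s)$ is done by monotone approximation, using that $L_g$ is an $L^1$--contraction and that $L_g|f|<\infty$ $\nu^s$--a.e., so the defining series converges absolutely a.e.

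\textbf{Parts (2) and (3).} The quantity $\nu([x_0]\mid{_1[}x_{1},\ldots,x_{k}])$ is $\E_{\nu^s}(1_{[x_0]}\mid\sigma(x_1,\ldots,x_k))$ (all cylinders here sit at nonnegative coordinates), and $\sigma(x_1,\ldots,x_k)\uparrow\sigma(x_1^\infty)$; Lévy's theorem gives the limit $\nu^s$--a.e.\ and identifies it with $\E_{\nu^s}(1_{[x_0]}\mid x_1^\infty)$, which by Part (1) is $L_g 1_{[x_0]}(x_1^\infty)=\sum_{\sigma_s(y)=x_1^\infty}g(y)1_{[x_0]}(y)$. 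For $\nu^s$--a.e.\ $x$ the word $x_0^\infty$ is admissible and is the unique preimage of $x_1^\infty$ lying in $[x_0]$, so the sum equals $g(x_0^\infty)$, giving (2). For (3), the same martingale argument yields the limit $\nu$--a.e.\ as $\E_\nu(1_E\mid\sigma(x_0^\infty))$ with $E={_{-n}[}x_{-n},\ldots,x_{-1}]$; applying the $\sigma^n$--invariance of $\nu$ converts this into the conditional expectation, on the one-sided system, of the indicator of the length-$n$ cylinder $[x_{-n},\ldots,x_{-1}]$ given $\sigma(x_n^\infty)$, which by iterating Part (1) equals $L_g^n 1_{[x_{-n},\ldots,x_{-1}]}(x_n^\infty)$. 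Since $L_g^n$ applied to a length-$n$ cylinder indicator is a product of $n$ values of $g$ — precisely $g(x_{-n}^\infty)\cdots g(x_{-1}^\infty)$ with the tail read off the conditioning sequence — a final use of $\sigma_s$--invariance (the integrand now depends only on $x_n^\infty$) replaces $x_n^\infty$ by $x_0^\infty$ and produces $g_n(x_{-n}^\infty)$. Alternatively (3) follows from (2) by induction on $n$, splitting $E$ at coordinate $-1$ and using the tower property for $\sigma(x_0^\infty)\subseteq\sigma(x_{-1}^\infty)$ together with shift--invariance.

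\textbf{Where the work is.} Once Theorem \ref{Thm-RPF} supplies $h^s,\xi^s,\lambda$ there is no genuine analytic obstacle; the care is measure--theoretic. One must check that the duality $\int(\psi\circ\sigma_s)f\,d\nu^s=\int\psi\,L_g f\,d\nu^s$ really pins down the conditional expectation onto the \emph{full} tail $\sigma$--algebra $\sigma_s^{-1}\mathfs B(\Sigma^s)$ and not merely against finite cylinders; that the a.e.\ convergence of the series defining $L_g f$ survives the passage from bounded $f$ to $f\in L^1(\nu^s)$, which is a real point here since $\Sigma^s$ is noncompact and the alphabet is infinite; and, in Part (3), that the index shifts relating the two-sided cylinder at positions $-n,\dots,-1$ to a one-sided cylinder at positions $0,\dots,n-1$ are performed consistently. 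None of these is difficult, but together they are the substance of the argument.
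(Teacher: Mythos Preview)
Your proposal is correct and follows essentially the same route as the paper's own proof: Part (1) from the eigen-equations $\nu^s=h^s\xi^s$, $Lh^s=\lambda h^s$, $L^\ast\xi^s=\lambda\xi^s$ via the duality for the normalized operator $L_g$; Part (2) from Part (1) and the (increasing) martingale convergence theorem; Part (3) from Part (2) together with the $\sigma$--invariance of $\nu$. The paper's proof is three lines pointing to \cite{Ledrappier-Principe-Variationnel}, and you have simply unpacked those lines; your ``alternative'' derivation of (3) by induction on $n$ from (2) is in fact exactly the argument the paper has in mind.
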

\begin{proof}
Part (1) follows from the equations  $\nu^s=h^s\xi^s$, $L h^s=\lambda h^s$, $L^\ast\xi^s=\lambda\xi^s$ as in \cite{Ledrappier-Principe-Variationnel}. Part (2) follows from part (1) and the martingale convergence theorem. Part (3) follows from part (2) and the invariance of $\nu$.  
\end{proof}
\noindent

One should view (\ref{g-function}) as a consistent set of equations which determine the conditional probability measure $\nu(\cdot|x_0^\infty)$ on $W^s_{\rm loc}(x)$, by specifying
the weights these measures give to cylinders. By consistent we mean that $\sum_{\sigma_s(y_0^\infty)=x_0^\infty}g(y_0^\infty)=1$.
Henceforth, we define $\nu(\cdot|x_0^\infty)$ as follows.

\medskip
\noindent
{\sc Measure $\nu(\cdot|x_0^\infty)$:} $\nu(\cdot|x_0^\infty)$ is the unique probability measure on $W^s_{\rm loc}(x)$
s.t.
$
\nu(\un{a}|x_0^\infty):=g_n(\un{a}x_0^\infty)$ for admissible words $\un{a}$.

\begin{lem}\label{Lemma-Non-Atomic}
Let $\nu$ be as in Theorem \ref{Thm-RPF}. If $\Sigma_r$ is topologically transitive and $\Sigma_r$ is not a union of cycles,  then  $\nu(\cdot|x_0^\infty)$ is non-atomic for $\nu$--a.e. $x\in\Sigma$.
\end{lem}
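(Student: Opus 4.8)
The plan is to reduce the whole statement to one about the single scalar function $\psi(x):=\nu(\{x\}\mid x_0^\infty)$ and the set $B:=\{x\in\Sigma:\psi(x)>0\}$. From the definition of $\nu(\cdot|x_0^\infty)$ as the Kolmogorov extension of the weights $\nu(\un a|x_0^\infty)=g_n(\un a\,x_0^\infty)$, one gets the product formula $\psi(x)=\prod_{k\ge 1}g(x_{-k}^\infty)$, and a direct check of how the arguments $x_{-k}^\infty$ shift yields the transformation rule $\psi(\sigma x)=g(x_0^\infty)\,\psi(x)$. Since $g>0$, this gives $\sigma^{-1}B=B$, so by ergodicity of $\nu$ (Theorem~\ref{Thm-RPF}(5), inherited by $\nu$ as the natural extension of $\nu^s$) we have $\nu(B)\in\{0,1\}$.

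Next I would observe that if $y\in W^s_{\loc}(x)$ then $y_0^\infty=x_0^\infty$, so $\nu(\{y\}\mid x_0^\infty)=\psi(y)$; hence the atoms of $\nu(\cdot|x_0^\infty)$ are exactly the points of $B\cap W^s_{\loc}(x)$ and its total atomic mass equals $\nu(\cdot|x_0^\infty)(B)$. Since the family $\{\nu(\cdot|x_0^\infty)\}$ is a disintegration of $\nu$ over $(\Sigma^s,\nu^s)$ — this is precisely what the theorem preceding the lemma establishes, via martingale convergence — we get $\nu(B)=\int_{\Sigma^s}\nu(\cdot|x_0^\infty)(B)\,d\nu^s$. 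Therefore, if $\nu(B)=0$, then $\nu(\cdot|x_0^\infty)$ is non-atomic for $\nu$-a.e.\ $x$ and we are done.

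It remains to exclude the case $\nu(B)=1$. In that case $\prod_{k\ge1}g(x_{-k}^\infty)>0$ for $\nu$-a.e.\ $x$, hence, since $0<g\le1$, $\sum_{k\ge1}\bigl(1-g(x_{-k}^\infty)\bigr)<\infty$ a.e. Writing $G:=1-g\circ\pi_s\ge0$, this reads $\sum_{k\ge1}G(\sigma^{-k}x)<\infty$ a.e., which by Birkhoff's ergodic theorem applied to the ergodic measure preserving map $\sigma^{-1}$ forces $\int_\Sigma G\,d\nu=0$, i.e.\ $g\equiv1$ $\nu^s$-a.e. Combined with $\sum_{\sigma_s(y)=x}g(y)=1$ and $g>0$, this implies that for $\nu^s$-a.e.\ $x$ the point $x$ has a unique $\sigma_s$-preimage, i.e.\ the vertex $x_0$ has in-degree $1$ in $\mathfs G$; by $\sigma$-invariance of $\nu$, for $\nu$-a.e.\ $x$ every $x_n$ has in-degree $1$. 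Thus $\nu$ is carried by a topologically transitive sub-TMS all of whose vertices have in-degree $\le 1$, hence exactly $1$ (transitivity forces in-degree $\ge1$); an edge count then forces all out-degrees to equal $1$ as well, so this sub-TMS is a single finite cycle and $\nu$ sits on a periodic orbit. But by Theorem~\ref{Thm-RPF}(1),(3) the measure $\nu^s$ (hence $\nu$) is positive on every nonempty cylinder of $\Sigma$; if $\nu$ were supported on a periodic orbit this would force $\Sigma$ itself to be a single cycle, contradicting the hypothesis that $\Sigma_r$ is not a union of cycles.

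I expect the genuine obstacle to be this final step: ruling out the periodic-orbit alternative. Everything earlier is soft (the transformation rule for $\psi$, ergodicity, and the Birkhoff argument), but the implication ``$\nu$ supported on a periodic orbit $\Rightarrow\Sigma$ is a cycle'' rests on the Gibbs-type positivity of $\nu$ on all cylinders, which must be extracted carefully from the Ruelle--Perron--Frobenius theory underlying Theorem~\ref{Thm-RPF} (\cite{Buzzi-Sarig}) rather than taken for granted. A secondary point requiring care is justifying that $\nu(\cdot|x_0^\infty)$ really is an honest disintegration of $\nu$, so that the identity $\nu(B)=\int\nu(\cdot|x_0^\infty)(B)\,d\nu^s$ is valid for the non-cylindrical set $B$.
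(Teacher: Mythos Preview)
Your argument is correct, but it takes a different and somewhat longer route than the paper's. The paper uses the hypothesis directly at the start: since $\Sigma$ is not a cycle, some vertex $b$ has in-degree $\ge 2$, so for any edge $a\to b$ one has $g<1$ on $\{z:(z_0,z_1)=(a,b)\}$; H\"older continuity of $\log g$ then gives a cylinder $[\un w]$ on which $g\le 1-\epsilon$. Ergodicity forces $\nu$--a.e.\ $x$ to have $x_{-k}^{-k+|\un w|-1}=\un w$ for infinitely many $k>0$, so the product $\prod_{k\ge1}g(x_{-k}^\infty)$ has infinitely many factors $\le 1-\epsilon$ and vanishes, giving non-atomicity directly.

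Your proof reaches the same structural fact (in-degree $\ge 2$ somewhere) by contrapositive: you first set up the invariance of $B=\{\psi>0\}$ and the $0$--$1$ dichotomy, then in the bad case use Birkhoff to force $g\equiv 1$ $\nu^s$--a.e., upgrade this to $g\equiv 1$ everywhere via continuity of $g$ and global support of $\nu^s$, and only then read off that every vertex has in-degree $1$, contradicting the hypothesis. This works, and the Birkhoff/invariance machinery is a nice way to organize things, but it is more roundabout: the paper never needs the $0$--$1$ law or Birkhoff because it exhibits the set of ``good'' $x$ explicitly as $\{x:\un w$ occurs infinitely often in $x_{-\infty}^0\}$. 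One small advantage of your approach is that it makes transparent exactly which dynamical fact (ergodicity plus global support) is doing the work; the paper's proof uses the same ingredients but packages them into a single ergodicity-and-recurrence step.
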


\begin{proof}
Since $\Sigma_r$ is topologically transitive and $\Sigma_r$ is not a union of cycles, the same is true for $\Sigma$.
In particular there is a state $b$ with in-degree at least two. Fix one such edge $a\to b$.
Since $\sum_{\sigma_s(z_0^\infty)=y_0^\infty}g(z_0^\infty)=1$,
we have $g(z)<1$ for every $z\in\Sigma^s$ s.t. $(z_0,z_1)=(a,b)$. By the H\"older continuity of $\log g$,
we can find a word $\un{w}:=(a,b,b_2,\ldots,b_n)$ s.t.  $g\restriction_{[\un{w}]}<1$.
By (\ref{g-function}), $\nu(\{z\}|x_0^\infty)=0$ whenever
$
z\in Z:=\{z\in\Sigma: z_{n}^{n+|\un{w}|-1}=\un{w}\text{ for infinitely many }n<0\}.
$
The conclusion is that $\nu(\cdot|x_0^\infty)$ is non-atomic for every $x_0^\infty$ s.t. $\nu(Z|x_0^\infty)=1$.

Let us show that this last condition is true $\nu$--a.e. By Theorem \ref{Thm-RPF},
$\nu$ is ergodic and positive on cylinders, hence $\nu(Z)=1$, i.e.
$\int \nu(Z|x_0^\infty)d\nu(x)=\nu(Z)=1$, so $\nu(Z|x_0^\infty)=1$ for $\nu$--a.e. $x\in\Sigma$.
\end{proof}

\subsection*{Local product structure of the induced measure}
Let $\sigma:\Sigma\to\Sigma$ be a TMS. The following definitions are motivated by smooth ergodic theory, see e.g.  \cite{Barreira-Pesin-Non-Uniform-Hyperbolicity-Book}:

\begin{enumerate}[$\circ$]
\item $W^s(x):=\{y\in\Sigma: d(\sigma^n(x),\sigma^n(y))\xrightarrow[n\to\infty]{}0\}
=\{y\in\Sigma:\exists n\text{ s.t. } y_n^\infty=x_n^\infty\}$.
\item $W^u(x):=\{y\in\Sigma: d(\sigma^{n}(x),\sigma^{n}(y))\xrightarrow[n\to-\infty]{}0\}
\!=\!\{y\in\Sigma:\exists n\text{ s.t. } y_{-\infty}^n=x_{-\infty}^n\}$.
\item $W^s_{\loc}(x):=\{y\in\Sigma:y_0^\infty=x_0^\infty\}$.
\item $W^u_{\loc}(x):=\{y\in\Sigma:y_{-\infty}^0=x_{-\infty}^0\}$.
\end{enumerate}

\medskip
\noindent
{\sc Smale bracket of points:} Let $x,y\in\Sigma$ with $x_0=y_0$.
The {\em Smale bracket} of $x,y$ is
$[x,y]:=z$ where $z_i=x_i$ for $i\leq 0$ and $z_i=y_i$ for $i\geq 0$.

\medskip
If $x_0=y_0=v$, then $[W^s_{\loc}(x),W^u_{\loc}(y)]=\{[x',y']:x'\in W^s_{\loc}(x),y'\in W^s_{\loc}(y)\}=[v]=\{z\in\Sigma:z_0=v\}$.
We can also consider the Smale products of measures.
Let $\alpha^s_{x}$, $\beta^u_{y}$ be finite measures on $W^s_{\loc}(x)$, $W^u_{\loc}(y)$ respectively.

\medskip
\noindent
{\sc Smale bracket of measures:} The {\em Smale bracket} of $\alpha^s_{x},\beta^u_{y}$ is
a finite measure on $[W^s_{\loc}(x),W^u_{\loc}(y)]={[v]}$ defined by
$$
(\alpha^s_{x}\star\beta^u_{y})(E):=\int_{W^u_{\loc}(y)}\int_{W^s_{\loc}(x)} 1_E([x',y'])d\alpha^s_{x}(x')d\beta^u_{y}(y'),\ \ (E\textrm{ Borel measurable}).
$$

The Smale product produces measures on $\Sigma$ out of measures on $W^s_{\loc}(x),W^u_{\loc}(y)$.
We can also produce measures on  $W^s_{\loc}(x),W^u_{\loc}(y)$ from measures on $\Sigma$. Let:
\begin{enumerate}[$\circ$]
\item $p^s_{x}:[x_0]\to W^s_{\loc}(x)$, $p^s_{x}(\cdot)=[\cdot,x]$.
\item $p^u_{x}:[x_0]\to W^u_{\loc}(x)$, $p^u_{x}(\cdot)=[x,\cdot]$.
\end{enumerate}

\medskip
\noindent
{\sc Projection measures:} The {\em projections} of $\nu$ on $W^s_{\loc}(x),W^u_{\loc}(y)$ are
\begin{equation}\label{nu-s}
\left\{
\begin{aligned}
\nu^s_{x}&:=\nu\circ (p^s_{x})^{-1},\ \ \textrm{a measure on $W^s_{\loc}(x)$},\\
\nu^u_{y}&:=\nu\circ (p^u_{y})^{-1}, \ \ \textrm{a measure on $W^u_{\loc}(y)$}.
\end{aligned}\right.
\end{equation}


\medskip
Note that for $\tau=u,s$:
\begin{enumerate}[$\circ$]
\item $\nu^\tau_{x}=\nu^\tau_{y}$ iff $W^\tau_{\loc}(x)=W^\tau_{\loc}(y)$.
\item $\nu^\tau_{x}=(\nu^\tau_y\circ p^\tau_{y})\upharpoonright_{W^\tau_{\loc}(x)}$ whenever $x_0=y_0$.
\end{enumerate}

\medskip
\noindent
{\sc Local product structure:} $\nu$ is said to have {\em local product structure} if for every
$x,y\in\Sigma$ s.t. $x_0=y_0=v$ we have
$\nu^s_{x}\star\nu^u_{y}\sim\nu\upharpoonright_{ [v]}$.


\begin{thm}\label{Prop-Prod-Strctr-measure}
Let $\mu$ be an equilibrium measure of a bounded H\"older continuous potential with finite pressure on a topologically transitive TMF, and let $\nu$ be its induced measure. Then $\nu$ is globally supported, and $\nu$ has local product structure.
\end{thm}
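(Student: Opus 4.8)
The plan is to prove the two assertions separately, leaning on Theorem~\ref{Thm-RPF} and the $g$--function description of the conditional measures of $\nu$. For global support: by Theorem~\ref{Thm-RPF} one has $\nu^s=h^s\xi^s$ with $h^s$ positive, and $\xi^s$ positive on every non-empty cylinder of $\Sigma^s$ (this positivity is implicit already in part~(3), where $1/\xi^s[\un a]$ appears). Since $(\Sigma,\nu,\sigma)$ is the natural extension of $(\Sigma^s,\nu^s,\sigma_s)$, $\nu$ is positive on every cylinder ${_m[}\un a]$ with $m\ge 0$, and then on all cylinders by $\sigma$--invariance; hence $\operatorname{supp}\nu=\Sigma$.

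For the local product structure, fix a state $v$ and $x,y\in\Sigma$ with $x_0=y_0=v$. Every non-empty cylinder $C\subseteq[v]$ has the form $C={_{-m}[}\un a,v,\un b]$ with $\un a=(a_{-m},\dots,a_{-1})$ and $\un b=(b_1,\dots,b_n)$, $m,n\ge 0$. Unwinding the definitions of $p^s_x$, $p^u_y$ and of the Smale bracket of measures --- using that in $[x',y']$ the coordinates of index $\le 0$ come from $x'\in W^s_{\loc}(x)$ and those of index $\ge 1$ from $y'\in W^u_{\loc}(y)$, while the constraint ``$[\,\cdot\,]_0=v$'' is automatic --- one obtains
\begin{equation}\label{eq:bracket-on-cyl}
(\nu^s_x\star\nu^u_y)(C)=\nu({_{-m}[}\un a,v])\cdot\nu({_0[}v,\un b]).
\end{equation}
In particular $\nu^s_x\star\nu^u_y$ depends on $x,y$ only through $v$, and it remains to compare the right-hand side of \eqref{eq:bracket-on-cyl} with $\nu(C)$.

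Here I would use the $g$--function. By Ledrappier's conditional--measure theorem, $\E_\nu(\ind_{{_{-m}[}\un a]}\mid x_0^\infty)=g_m(\un a\,x_0^\infty)$ $\nu$--a.e. (with $\un a\,x_0^\infty:=(a_{-m},\dots,a_{-1},x_0,x_1,\dots)$ and $g_m$ as in \eqref{g-function}); since $\ind_{{_0[}v,\un b]}$ depends only on $x_0^\infty$, conditioning gives
\begin{equation}\label{eq:cyl-as-integral}
\nu({_{-m}[}\un a,v,\un b])=\int_{\{w\in\Sigma^s:\,w_0=v,\ w_1^n=\un b\}}g_m(\un a\,w_0^\infty)\,d\nu^s(w).
\end{equation}
The crucial step is a bounded distortion estimate: there is $D\ge 1$, depending only on the H\"older data of $\log g$, such that $g_m(\un a\,w_0^\infty)\asymp g_m(\un a\,\wt w_0^\infty)$ with constant $D^{\pm1}$ whenever $w_0=\wt w_0$ and both concatenations are admissible, uniformly in $m$ and in $\un a$. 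This holds because $\sigma_s^j(\un a\,w_0^\infty)$ and $\sigma_s^j(\un a\,\wt w_0^\infty)$ agree in coordinates $0,\dots,m-j$ --- hence lie in a common cylinder of length two since $w_0=\wt w_0$ --- so the $j$--th factors of $g_m$ differ by $\exp(\pm C\,e^{-(m-j+1)\alpha})$, and these errors sum to a convergent geometric series independent of $m$. Applying this in \eqref{eq:cyl-as-integral} with $\wt w=x$, and again with $\un b$ deleted, yields $\nu({_{-m}[}\un a,v,\un b])\asymp g_m(\un a\,x_0^\infty)\,\nu({_0[}v,\un b])$ and $\nu({_{-m}[}\un a,v])\asymp g_m(\un a\,x_0^\infty)\,\nu[v]$ with constants $D^{\pm1}$, all quantities being positive by the first part (here $\nu({_0[}v,\un b])=\nu^s\{w_0=v,\ w_1^n=\un b\}$ and $\nu[v]=\nu^s\{w_0=v\}$). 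Dividing and using \eqref{eq:bracket-on-cyl}, $(\nu^s_x\star\nu^u_y)(C)\asymp\nu[v]\cdot\nu(C)$ with constant $D^{\pm2}$, for every cylinder $C\subseteq[v]$. Since the cylinders inside $[v]$ generate $\mathfs B([v])$ and both measures are finite, a standard extension argument upgrades this to $D^{-2}\nu[v]\cdot\nu\upharpoonright_{[v]}\le\nu^s_x\star\nu^u_y\le D^{2}\nu[v]\cdot\nu\upharpoonright_{[v]}$; in particular $\nu^s_x\star\nu^u_y\sim\nu\upharpoonright_{[v]}$, which is the asserted local product structure.

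The only non-formal ingredient is the bounded distortion estimate for the products $g_m$, and the point to watch is that $D$ must be uniform in the length $m$ and in the word $\un a$; this is why one compares the two concatenations coordinate by coordinate and sums a geometric series, rather than appealing to H\"older continuity of $g_m$ itself (whose modulus degrades with $m$). The identity \eqref{eq:bracket-on-cyl}, the representation \eqref{eq:cyl-as-integral} coming from Ledrappier's theorem, and the passage from cylinders to all Borel sets are routine bookkeeping.
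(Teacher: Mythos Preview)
Your proof is correct and follows the same overall strategy as the paper: establish a two-sided Gibbs/quasi-Bernoulli inequality
\[
\nu({_{-m}[}\un a,v,\un b])\asymp \nu({_{-m}[}\un a,v])\cdot\nu({_0[}v,\un b])
\]
on cylinders, then pass to all Borel subsets of $[v]$ by a monotone class argument. The difference is in how that cylinder estimate is obtained. The paper imports it as a black box from \cite[Corollary~3.2]{Sarig-Bernoulli-JMD}, which yields constants $C_{vw}$ depending on the edge $v\to w$. You instead work directly with the $g$--function description of the conditional measures (already set up in the paper after Theorem~\ref{Thm-RPF}) and prove the bounded distortion of $g_m$ by hand, exploiting the uniform H\"older continuity of $\log g$ on cylinders of length two. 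This makes the argument self-contained and even gives a single constant $D$ independent of the vertex $v$, which is slightly sharper than what the paper states. Your identity \eqref{eq:bracket-on-cyl} and the representation \eqref{eq:cyl-as-integral} are correct, and the key point you flag---that the distortion bound must be uniform in $m$ and $\un a$, which is why one compares factor by factor rather than invoking H\"older continuity of $g_m$---is exactly the right thing to emphasize.
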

\begin{proof}

Let $\sigma: \Sigma\to\Sigma$ be the associated TMS, and let $\mathfs G$ be a directed graph associated to
$\Sigma$. Since the TMF is topologically transitive, $\sigma:\Sigma\to\Sigma$ is topologically transitive,
hence any two vertices on $\mathfs G$ can be joined  by a path.

\medskip
\noindent
{\sc Claim:\/} {\em Every non-empty cylinder on $\Sigma$ has positive $\nu$--measure, and for every edge
$v\to w$ there is a constant $C_{vw}>1$ s.t. if $m<0, n>0$ and ${_m[}v_m,\ldots,v_n]\neq\emptyset$, then
$
C_{v_0 v_1}^{-1}\leq \frac{\nu({_{m}[}v_m,\ldots,v_n])}{\nu({_{m}[}v_m,\ldots,v_{0}])\nu({_0[}v_0,\ldots,v_n])}\leq C_{v_0 v_1}.
$}

\medskip
\noindent
{\em Proof of the claim.\/} Let  $\nu^s$ be the one-sided version of $\nu$.  Theorem \ref{Thm-RPF} implies, as in
\cite[Corollary 3.2]{Sarig-Bernoulli-JMD},
the existence of constants  $K_v,D_{vw}>0$ s.t.
\begin{enumerate}[(a)]
\item
$
K_{a_{n-1}}^{-1}\leq \frac{\nu^s({[}a_0,\ldots,a_{n-1},b_0,\ldots,b_{k-1}])}{\nu^s({[}a_0,\ldots,a_{n-1}])\nu^s({[}b_0,\ldots,b_{k-1}])}\leq K_{a_{n-1}}
$ for all $\un{a},\un{b}$ s.t. $[\un{a},\un{b}]\neq \emptyset$,
\item $D_{a_{n-1}b_0}^{-1}\leq \frac{\nu^s({[}b_0,\ldots,b_{k-1}])}{\nu^s({[}a_{n-1},b_0,\ldots,b_{k-1}])}\leq D_{a_{n-1}b_0}$ whenever $[a_{n-1},\un{b}]\neq \emptyset$.
\end{enumerate}
By (a)--(b), there are constants $C_{vw}$ s.t. for all $\un{a},\un{b}$ with $[\un{a},\un{b}]\neq \emptyset$ we have:
$$
C_{a_{n-1}b_0}^{-1}\leq \frac{\nu^s({[}a_0,\ldots,a_{n-1},b_0,\ldots,b_{k-1}])}{\nu^s({[}a_0,\ldots,a_{n-1}])\nu^s({[}a_{n-1},b_0,\ldots,b_{k-1}])}\leq C_{a_{n-1}b_0}.
$$
Substituting  $\un{a}=(v_m,\ldots,v_0),\un{b}=(v_1,\ldots,v_n)$ gives the  claim.

\medskip
By the claim, if $E$ is a cylinder contained in $ [v,w]$ and $x,y\in {[}v]$ then:
\begin{equation}\label{product-identity}
C_{vw}^{-1}\times(\nu^s_{x}\star\nu^u_{y})(E)\leq \nu(E)\leq C_{vw}\times (\nu^s_{x}\star\nu^u_{y})(E).
\end{equation}
The collection of cylinders $E\subset {[}v,w]$ satisfying (\ref{product-identity})  is closed under increasing unions and decreasing intersections. By the monotone class theorem, (\ref{product-identity})  holds for every Borel set $E\subset {[}v,w]$,
whence $\nu^s_{x}\star\nu^u_{y}\sim\nu\upharpoonright_{{[v]}}$.
\end{proof}

\begin{cor}\label{Cor-Prod-Structure}
Let $\nu$ be as in the previous theorem. If $E\subset\Sigma$ is Borel and $\nu(E)=0$, then
$\nu^s_{x}(E)=\nu^u_{x}(E)=0$ for $\nu$--a.e. $x$.
\end{cor}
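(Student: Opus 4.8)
The plan is to deduce the corollary from the local product structure of $\nu$ established in Theorem~\ref{Prop-Prod-Strctr-measure}, via Fubini and the elementary algebra of the Smale bracket. Since $\Sigma=\biguplus_{v}[v]$ over the (countably many) vertices $v$ with $[v]\neq\emptyset$, and since $\nu^s_x$ (resp.\ $\nu^u_x$) is carried by $W^s_{\loc}(x)\subseteq[x_0]$ (resp.\ $W^u_{\loc}(x)\subseteq[x_0]$), it suffices to fix such a $v$, set $F:=E\cap[v]$ (so $\nu(F)=0$), and produce a $\nu$-null set $N_v\subseteq[v]$ with $\nu^s_z(F)=0$ and $\nu^u_z(F)=0$ for all $z\in[v]\setminus N_v$: then for $x\notin\bigcup_vN_v$, taking $v=x_0$ one gets $\nu^s_x(E)=\nu^s_x\big(E\cap W^s_{\loc}(x)\big)=\nu^s_x\big((E\cap[v])\cap W^s_{\loc}(x)\big)=\nu^s_x(F)=0$, and likewise $\nu^u_x(E)=0$. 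I will carry out the stable case; the unstable one is obtained by reversing time (exchanging pasts and futures).

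Fix a reference point $y_\ast\in[v]$. Local product structure gives $\nu^s_{y_\ast}\star\nu^u_{y_\ast}\sim\nu\restriction_{[v]}$, in particular $\nu^s_{y_\ast}\star\nu^u_{y_\ast}\ll\nu\restriction_{[v]}$, so $(\nu^s_{y_\ast}\star\nu^u_{y_\ast})(F)=0$. Unfolding the definition of the Smale bracket of measures and applying Tonelli to the jointly Borel function $(x',y')\mapsto 1_F([x',y'])$,
$$
0=(\nu^s_{y_\ast}\star\nu^u_{y_\ast})(F)=\int_{W^u_{\loc}(y_\ast)}\nu^s_{y_\ast}\Big(\big\{x'\in W^s_{\loc}(y_\ast):[x',y']\in F\big\}\Big)\,d\nu^u_{y_\ast}(y'),
$$
so the nonnegative Borel function $y'\mapsto\nu^s_{y_\ast}\big(\{x':[x',y']\in F\}\big)$ vanishes for $\nu^u_{y_\ast}$-a.e.\ $y'$.

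The key point is that this slice equals $\nu^s_{y'}(F)$ for \emph{every} $y'\in W^u_{\loc}(y_\ast)$. Indeed $p^s_{y_\ast}$ maps $[v]$ onto $W^s_{\loc}(y_\ast)$ with $p^s_{y_\ast}(z)=[z,y_\ast]$, and since the bracket keeps the past of its first argument and the future of its second, $[[z,y_\ast],y']=[z,y']=p^s_{y'}(z)$; hence, using $\nu^s_{y_\ast}=\nu\circ(p^s_{y_\ast})^{-1}$ and $\nu^s_{y'}=\nu\circ(p^s_{y'})^{-1}$,
$$
\nu^s_{y_\ast}\big(\{x'\in W^s_{\loc}(y_\ast):[x',y']\in F\}\big)=\nu\big(\{z\in[v]:[z,y']\in F\}\big)=\nu^s_{y'}(F).
$$
Thus $\nu^s_{y'}(F)=0$ for $\nu^u_{y_\ast}$-a.e.\ $y'$. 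Now transport this back through $p^u_{y_\ast}$: since $\nu^u_{y_\ast}=\nu\circ(p^u_{y_\ast})^{-1}$ with $p^u_{y_\ast}(z)=[y_\ast,z]$, and $[y_\ast,z]$ has the same future as $z$ so that $W^s_{\loc}([y_\ast,z])=W^s_{\loc}(z)$ and $\nu^s_{[y_\ast,z]}=\nu^s_z$, the set $N_v:=\{z\in[v]:\nu^s_z(F)>0\}$ is the $p^u_{y_\ast}$-preimage of a $\nu^u_{y_\ast}$-null Borel set, whence $\nu(N_v)=0$. This is what was needed on $[v]$, and the union over $v$ finishes the proof.

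Once the local product structure is in hand the argument is essentially bookkeeping; the only place needing care is the identification of the Fubini slice with $\nu^s_{y'}(F)$ and the two changes of base point (from $y_\ast$ to $y'$ via $p^s$, and from $y'$ to a generic $z$ via $p^u_{y_\ast}$), which use only $\nu^\tau_x=\nu\circ(p^\tau_x)^{-1}$, the bracket identities above, and the fact that $\nu^\tau_x$ depends on $x$ only through $W^\tau_{\loc}(x)$. I do not expect a serious obstacle, but one should check that every set in sight is Borel so that Tonelli's theorem and the pushforward identities apply — this is automatic because $\Sigma$ is a Polish space and the bracket and the maps $p^s_x,p^u_x$ are continuous.
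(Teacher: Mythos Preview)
Your proof is correct and follows essentially the same approach as the paper's: both reduce to a single cylinder $[v]$, exploit the local product structure $\nu^s_{x}\star\nu^u_{y}\sim\nu\restriction_{[v]}$ together with Fubini, and use the key observation that $\nu^s_{[x',y']}=\nu^s_{y'}$ (your bracket identity $[[z,y_\ast],y']=[z,y']$ encodes the same thing). The only difference is cosmetic: the paper argues by contradiction (assuming $\nu\{x:\nu^s_x(E)>0\}>0$ and deriving $(\nu^s_x\star\nu^u_y)(E)>0$), whereas you give the direct Fubini argument, which is arguably cleaner.
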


\begin{proof}
Let $\Omega_v:=\{x\in\Sigma:x_0=v\textrm{ and }\nu^s_{x}(E)>0\}$, and  assume by  contradiction
that $\nu(\Omega_v)>0$ for some $v$. Since $\nu$ has local product structure, if $x,y\in {[}v]$ then:
$$
\int_{W^u_{\loc}(y)}\int_{W^s_{\loc}(x)}1_{\Omega_v}([x',y'])d\nu^s_{x}(x')d\nu^u_{y}(y')>0.
$$
Note that $[x',y']\in\Omega_v\Leftrightarrow \nu^s_{[x',y']}(E)>0\overset{!}{\Leftrightarrow}\nu^s_{y'}(E)>0
\Leftrightarrow y'\in \Omega_v$
($\overset{!}{\Leftrightarrow}$ is because $\nu^s_{[x',y']}=\nu^s_{y'}$). Hence $1_{\Omega_v}([x',y'])=1_{\Omega_v}(y')$. Calculating the double integral, we find that
$\nu^u_y[\Omega_v]\nu^s_{x}[W^s_{\loc}(x)]>0\Rightarrow\nu^u_{y}[\Omega_v]>0$. We use this to get a contradiction.

\medskip
Let $y'\in\Omega_v$. Using that $\nu^s_{y'}=(\nu^s_{x}\circ p^s_{x})\upharpoonright_{W^s_{\loc}(y')}$, we have
\begin{align*}
0&<\nu^s_{y'}(E)=(\nu^s_{x}\circ p^s_{x})[E\cap W^s_{\loc}(y')]=\nu^s_{x}\{x'\in W^s_{\loc}(x):[x',y']\in E\}\\
&=\int_{W^s_{\loc}(x)}1_E([x',y'])d\nu^s_{x}(x').
\end{align*}
Since $\nu^u_{y'}[\Omega_v]>0$, if we integrate this inequality we obtain
$$
\int_{W^u_{\loc}(y)}\left(\int_{W^s_{\loc}(x)}1_{E}([x',y'])d\nu^s_{x}(x')\right)d\nu^u_{y}(y')>0,
$$
thus $(\nu^s_{x}\star\nu^u_{y})(E)>0$. Since $\nu$ has local product structure, this gives that $\nu(E)>0$, a contradiction.
We have just proved that $\nu[\Omega_v]=0$ for every vertex $v$, whence $\nu^s_{x}(E)=0$ for $\nu$--a.e. $x$.
By symmetry, $\nu^u_{x}(E)=0$ for $\nu$--a.e. $x$.
\end{proof}

\section{The Pinsker factor of a topological Markov flow}\label{section-Pinsker}

\subsection*{Review of general theory}

Let $(X,\mathfs B,\mu,T)$ be an {\em automorphism}, i.e.
$(X,\mathfs B,\mu)$ is a non-atomic Lebesgue probability space and $T$ is an invertible
transformation preserving $\mu$. Given $E\in\mathfs B$, let $\alpha_E=\{E,X\setminus E\}$.

\medskip
\noindent
{\sc Pinsker factor:} $E\in\mathfs B$ is called a {\em Pinsker set} if $h_\mu(T,\alpha_E)=0$.
The {\em Pinsker $\sigma$--algebra} is $\mathfs P(T):=\{E\in \mathfs B: E\text{ is a Pinsker set}\}$.
$(X,\mathfs P(T),\mu,T)$ is called the {\em Pinsker factor} of $(X,\mathfs B,\mu,T)$.

\medskip
$\mathfs P(T)$ is a $T$--invariant $\sigma$-algebra \cite{Pinsker}, hence $(X,\mathfs P(T),\mu,T)$ is
indeed a factor. $(X,\mathfs P(T),\mu,T)$ has zero entropy,
and if $\mathfs A\subset\mathfs B$ s.t. $(X,\mathfs A,\mu,T)$ is a factor of zero entropy then
$\mathfs A\subset\mathfs P(T)$ modulo $\mu$. Therefore $(X,\mathfs P(T),\mu,T)$ is the largest factor
of $(X,\mathfs B,\mu,T)$ with zero entropy.

\medskip
\noindent
{\sc Completely positive entropy:} $(X,\mathfs B,\mu,T)$ is said to have {\em completely positive entropy}
if it has a trivial Pinsker factor, i.e. if $\mathfs P(T)=\{\emptyset,X\}$ modulo $\mu$.

\medskip
Note that $(X,\mathfs B,\mu,T)$ has completely positive entropy iff all of its non-trivial factors have positive entropy.

\medskip
\noindent
{\sc Tail $\sigma$--algebra:} Given a $\sigma$--algebra $\mathfs A\subset\mathfs B$ with
$T^{-1}\mathfs A\subset\mathfs A$, the {\em tail $\sigma$--algebra}
of $\mathfs A$ is ${\rm Tail}(\mathfs A):=\bigcap_{n\geq 0} T^{-n}\mathfs A$.

%


\medskip
\noindent
{\sc K property:} $(X,\mathfs B,\mu,T)$ has the {\em K property} if there is a $\sigma$--algebra
$\mathfs A\subset\mathfs B$ s.t.
\begin{enumerate}[(a)]
\item $T^{-1}\mathfs A\subset  \mathfs A$,
\item $\bigvee\limits_{i=0}^\infty T^{i}\mathfs A=\mathfs B$ modulo $\mu$,
\item ${\rm Tail}(\mathfs A)=\{\emptyset,X\}$ modulo $\mu$.
\end{enumerate}

\begin{thm}[Rokhlin \& Sinai \cite{Rohlin-Sinai}]
$(X,\mathfs B,\mu,T)$ has the K property iff it has completely positive entropy.
\end{thm}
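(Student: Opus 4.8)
The plan is to deduce both implications from the classical conditional-entropy calculus, using the following standard facts. (i) If $\mathfs A$ is \emph{exhaustive}, i.e. satisfies (a) and (b), then $h_\mu(T)=H(\mathfs A\mid T^{-1}\mathfs A)$ (the one-sided Kolmogorov--Sinai theorem; note $T^{-1}\mathfs A\subset\mathfs A$ makes the $T^{-n}\mathfs A$ decrease, and (b) makes the $T^{N}\mathfs A$ increase to $\mathfs B$). (ii) If $\mathfs F$ is $T$-invariant and $H(\mathfs A\mid\mathfs D)=H(\mathfs A\mid\mathfs D\vee\mathfs F)<\infty$, then $\mathfs A$ and $\mathfs F$ are conditionally independent given $\mathfs D$ (the equality case in the subadditivity of conditional information), and conditional independence passes to sub-$\sigma$-algebras and is preserved by the measure-preserving maps $T^{j}$. (iii) For a fixed finite partition $\beta$, $H(\beta\mid\mathfs G_n)\to H(\beta\mid\mathfs G)$ whenever $\mathfs G_n\uparrow\mathfs G$ or $\mathfs G_n\downarrow\mathfs G$ (martingale convergence). (iv) For a $T$-invariant factor $\mathfs G$ and a countable generating partition $\mathfs C$ (which exists, by Rokhlin's theorem), the addition formula $h_\mu(T)=h_\mu(T|_{\mathfs G})+h_\mu(T\mid\mathfs G)$ holds, and the relative Kolmogorov--Sinai theorem gives $h_\mu(T\mid\mathfs G)=H(\mathfs C\mid(\bigvee_{k\geq1}T^{-k}\mathfs C)\vee\mathfs G)$. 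For clarity I would take $h_\mu(T)<\infty$, which covers the situation of interest in this paper (equilibrium measures of bounded potentials with finite pressure have finite entropy); the general case follows by approximating $\mathfs A$, respectively $\mathfs C$, by finite partitions and is routine.

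For the implication K $\Rightarrow$ CPE, let $\mathfs A$ witness the K property, let $\beta$ be any finite partition with $h_\mu(T,\beta)=0$, and set $\mathfs F:=\bigvee_{k\in\Z}T^{k}\beta$, a $T$-invariant factor of zero entropy; it suffices to prove $H(\beta)=0$. Since $\mathfs A\vee\mathfs F$ is again exhaustive, fact (i) applied to $\mathfs A$ and to $\mathfs A\vee\mathfs F$ gives $H(\mathfs A\mid T^{-1}\mathfs A)=h_\mu(T)=H(\mathfs A\vee\mathfs F\mid T^{-1}\mathfs A\vee\mathfs F)=H(\mathfs A\mid T^{-1}\mathfs A\vee\mathfs F)$, the last step by the chain rule since $\mathfs F\subset T^{-1}\mathfs A\vee\mathfs F$; fact (ii) then yields $\mathfs A\perp\mathfs F\mid T^{-1}\mathfs A$, hence $\mathfs A\perp\beta\mid T^{-1}\mathfs A$, i.e. $H(\beta\mid\mathfs A)=H(\beta\mid T^{-1}\mathfs A)$. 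Applying $T^{-j}$ and using $T^{-1}\mathfs F=\mathfs F$ gives $H(\beta\mid T^{-j}\mathfs A)=H(\beta\mid T^{-j-1}\mathfs A)$ for all $j\geq0$, so $H(\beta\mid\mathfs A)=H(\beta\mid T^{-n}\mathfs A)$ for every $n$; letting $n\to\infty$, by (iii) and condition (c) we get $H(\beta\mid\mathfs A)=H(\beta\mid{\rm Tail}(\mathfs A))=H(\beta)$. But (b) gives $T^{N}\mathfs A\uparrow\mathfs B$, so $H(T^{-N}\beta\mid\mathfs A)=H(\beta\mid T^{N}\mathfs A)\to0$; since $T^{-N}\beta$ is a translate of $\beta$ (finite, zero entropy, same $T$-invariant hull $\mathfs F$), the identity just established applies to it and gives $H(\beta)=H(T^{-N}\beta)=H(T^{-N}\beta\mid\mathfs A)\to0$. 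Thus $H(\beta)=0$, so every $\mathfs P(T)$-measurable set is trivial and $(X,\mathfs B,\mu,T)$ has completely positive entropy.

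For the converse CPE $\Rightarrow$ K, fix a countable generating partition $\mathfs C$ and put $\mathfs A:=\bigvee_{k\leq0}T^{k}\mathfs C$. Conditions (a) and (b) are immediate, since $T^{-1}\mathfs A=\bigvee_{k\leq-1}T^{k}\mathfs C\subset\mathfs A$ and $\bigvee_{i\geq0}T^{i}\mathfs A=\bigvee_{k\in\Z}T^{k}\mathfs C=\mathfs B$ mod $\mu$. For (c), since ${\rm Tail}(\mathfs A)$ is $T$-invariant it is enough to show that every finite ${\rm Tail}(\mathfs A)$-measurable partition $\beta$ satisfies $h_\mu(T,\beta)=0$ --- for then $\beta\subset\mathfs P(T)=\{\emptyset,X\}$ mod $\mu$ by CPE, forcing $\beta$ trivial, whence ${\rm Tail}(\mathfs A)$ is trivial. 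So fix such a $\beta$ and let $\mathfs F:=\bigvee_{k\in\Z}T^{k}\beta$; since $\beta$ is ${\rm Tail}(\mathfs A)$-measurable and ${\rm Tail}(\mathfs A)$ is $T$-invariant, $\mathfs F\subset{\rm Tail}(\mathfs A)=\bigcap_{n\geq0}T^{-n}\mathfs A\subset T^{-1}\mathfs A=\bigvee_{k\geq1}T^{-k}\mathfs C$. Applying fact (iv) with $\mathfs G=\mathfs F$, the last term becomes $H(\mathfs C\mid(\bigvee_{k\geq1}T^{-k}\mathfs C)\vee\mathfs F)=H(\mathfs C\mid\bigvee_{k\geq1}T^{-k}\mathfs C)=h_\mu(T)$ because $\mathfs F\subset\bigvee_{k\geq1}T^{-k}\mathfs C$, while $h_\mu(T|_{\mathfs F})=h_\mu(T,\beta)$; hence $h_\mu(T)=h_\mu(T,\beta)+h_\mu(T)$, and cancelling the finite quantity $h_\mu(T)$ gives $h_\mu(T,\beta)=0$, as required.

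The mathematical substance is concentrated in fact (iv) together with the observation $\mathfs F\subset\bigvee_{k\geq1}T^{-k}\mathfs C$ --- this is essentially the Rokhlin--Sinai statement that the remote past of a generating partition coincides with the Pinsker $\sigma$-algebra --- and I expect the main obstacle to be largely expository: facts (i)--(iv) are themselves non-trivial theorems built from the monotonicity, subadditivity and martingale-convergence properties of conditional entropy, and the argument is unforgiving about which $\sigma$-algebra each conditional entropy is computed over (the source of nearly all errors in this type of reasoning). The one genuinely extra ingredient is the reduction of the infinite-entropy case to finite partitions, which I would carry out by replacing the single cancellation of $h_\mu(T)$ with a limit over an exhaustion of $\mathfs C$, and over finite sub-partitions of $\mathfs A$, respectively.
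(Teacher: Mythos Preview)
The paper does not prove this theorem: it is merely stated with attribution to Rokhlin and Sina\u{\i} \cite{Rohlin-Sinai} and then used as a black box. So there is no ``paper's own proof'' to compare against; your proposal is being measured against the classical literature.

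Your argument is essentially the standard Rokhlin--Sina\u{\i} proof and is correct under the finite-entropy hypothesis you impose. The K $\Rightarrow$ CPE direction is the usual one: independence of $\mathfs A$ and the zero-entropy factor $\mathfs F$ given $T^{-1}\mathfs A$, propagated by $T$-invariance of $\mathfs F$ to get $H(\beta\mid T^{-n}\mathfs A)$ constant in $n$, then squeezed between the tail (trivial) and the full $\sigma$-algebra. The CPE $\Rightarrow$ K direction via the addition formula and the inclusion $\mathfs F\subset\bigvee_{k\geq 1}T^{-k}\mathfs C$ is exactly the Rokhlin--Sina\u{\i} identification of the tail of the past with the Pinsker factor; your cancellation step is where the finite-entropy assumption is genuinely used. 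Two small points worth tightening: in fact (ii) the equality case of conditional entropy gives conditional independence only when the quantities involved are finite, so you should be explicit that $H(\mathfs A\mid T^{-1}\mathfs A)=h_\mu(T)<\infty$ is what licenses the step; and the claim that the infinite-entropy case is ``routine'' is optimistic --- it requires the Rokhlin--Sina\u{\i} approximation of $\mathfs A$ from within by finite-entropy exhaustive sub-$\sigma$-algebras, which is itself a nontrivial lemma. For the purposes of this paper (where all relevant measures have finite entropy) your proof is adequate as written.
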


The K property is stronger than mixing. It implies continuous Lebesgue spectrum \cite{Rohlin-Exactness}, and the mixing property below,
called {\em K-mixing}, see \cite[\S10.8]{Cornfeld-Fomin-Sinai}. Write $\delta$--a.e. when a property holds for a set of atoms with total measure
$\geq 1-\delta$.

\begin{thm}\label{Thm-K-Mixing}
Let $(X,\mathfs B,\mu,T)$ be an automorphism with the K property, $B\in\mathfs B$, and
$\beta$ a finite measurable partition of $X$. Then for every $\delta>0$ there is $N_0=N_0(B,\delta)$
s.t. for all $N'>N\geq N_0$ and $\delta$--a.e. $A\in\bigvee_{k=N}^{N'}T^k\beta$ it holds
$\left|\mu(B|A)-\mu(B)\right|<\delta$.
\end{thm}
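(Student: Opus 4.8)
The plan is to reduce the combinatorial "$\delta$--a.e. atom'' assertion to an $L^1$ bound on conditional expectations, and then to obtain that bound from the triviality of the future tail $\sigma$--algebra of $\beta$ via reverse martingale convergence. Throughout write $\mathfs F_N:=\bigvee_{k\geq N}T^k\beta$ and $\mathfs F_\infty:=\bigcap_{N\geq 0}\mathfs F_N$, the future tail $\sigma$--algebra of $\beta$; note the $\mathfs F_N$ decrease with $N$, each $\bigvee_{k=N}^{N'}T^k\beta$ is a finite partition, and $T^{-1}\mathfs F_N=\mathfs F_{N-1}$, so $\mathfs F_\infty$ is $T$--invariant. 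First I would observe that for a finite partition $\mathfs G$ and $x$ in the $\mathfs G$--atom $A$ containing it one has $\mu(B|A)=\E_\mu(1_B|\mathfs G)(x)$, hence $\int\bigl|\E_\mu(1_B|\mathfs G)-\mu(B)\bigr|\,d\mu=\sum_A\mu(A)\,\bigl|\mu(B|A)-\mu(B)\bigr|$. By Markov's inequality, if this integral is $<\delta^2$ then the atoms $A$ with $\bigl|\mu(B|A)-\mu(B)\bigr|\geq\delta$ have total measure $<\delta$, which is exactly the desired conclusion. So it suffices to arrange $\bigl\|\E_\mu(1_B|\bigvee_{k=N}^{N'}T^k\beta)-\mu(B)\bigr\|_{L^1(\mu)}<\delta^2$ for all $N'>N\geq N_0$ (where $N_0$ may also depend on $\beta$).

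Next I would remove the dependence on $N'$. Since $\bigvee_{k=N}^{N'}T^k\beta\subseteq\mathfs F_N$, the tower property gives $\E_\mu(1_B|\bigvee_{k=N}^{N'}T^k\beta)-\mu(B)=\E_\mu\!\bigl(\E_\mu(1_B|\mathfs F_N)-\mu(B)\,\big|\,\bigvee_{k=N}^{N'}T^k\beta\bigr)$, and conditional expectation is an $L^1$--contraction, so $\bigl\|\E_\mu(1_B|\bigvee_{k=N}^{N'}T^k\beta)-\mu(B)\bigr\|_{L^1}\leq\bigl\|\E_\mu(1_B|\mathfs F_N)-\mu(B)\bigr\|_{L^1}$. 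Thus it is enough to find $N_0$ with $\bigl\|\E_\mu(1_B|\mathfs F_N)-\mu(B)\bigr\|_{L^1}<\delta^2$ for all $N\geq N_0$. Since $\mathfs F_N\downarrow\mathfs F_\infty$, the reverse martingale convergence theorem yields $\E_\mu(1_B|\mathfs F_N)\to\E_\mu(1_B|\mathfs F_\infty)$ in $L^1$; it therefore remains to prove that $\mathfs F_\infty$ is $\mu$--trivial, for then $\E_\mu(1_B|\mathfs F_\infty)=\mu(B)$ a.e., the limit is $0$, and a valid $N_0$ exists. Because $\mathfs F_\infty$ is $T$--invariant, the Rokhlin--Sinai theorem (K $\Leftrightarrow$ completely positive entropy) reduces this to showing that the factor $(X,\mathfs F_\infty,\mu,T)$ has zero entropy, whence $\mathfs F_\infty\subseteq\mathfs P(T)=\{\emptyset,X\}$ modulo $\mu$.

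The main obstacle is precisely this last point: that the future tail $\sigma$--algebra of an \emph{arbitrary} finite partition $\beta$ is trivial in a K--automorphism, equivalently that $(X,\mathfs F_\infty,\mu,T)$ carries no entropy. This is the classical ``$K\Rightarrow K$--mixing'' lemma; I would follow \cite[\S10.8]{Cornfeld-Fomin-Sinai}. The idea is to fix a two--sided generator $\alpha$ for the $\sigma$--algebra $\mathfs A$ witnessing the K property, approximate $\beta$ in the entropy metric by a partition measurable with respect to finitely many $T^k\alpha$, and then exploit the shift--invariance to push this window arbitrarily far into the future; the information about $\beta$ that survives all the way out to $\mathfs F_\infty$ then lies in the remote past of $\alpha$, i.e. in $\bigcap_{n}\bigvee_{k\leq -n}T^k\alpha$, which is trivial by conditions (a)--(c) of the K property. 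Everything else above is soft measure theory (Markov's inequality, $L^1$--contraction of conditional expectation, reverse martingale convergence), so the entire weight of the argument sits on this tail--triviality step.
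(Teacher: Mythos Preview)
The paper does not supply its own proof of this theorem; it is quoted as a known consequence of the K property with a reference to \cite[\S10.8]{Cornfeld-Fomin-Sinai}. Your argument is correct and is essentially the standard one found there: the Markov/$L^1$ reduction, the tower property plus $L^1$--contraction to pass from $\bigvee_{k=N}^{N'}T^k\beta$ to $\mathfs F_N$, and reverse martingale convergence reducing everything to triviality of the future tail $\mathfs F_\infty=\bigcap_N\bigvee_{k\geq N}T^k\beta$, which is the substantive step and follows from Rokhlin--Sinai (the factor on $\mathfs F_\infty$ has zero entropy, hence $\mathfs F_\infty\subseteq\mathfs P(T)=\{\emptyset,X\}$ mod $\mu$). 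Your sketch of that last step---approximating $\beta$ by a partition measurable with respect to a finite window of a K--generator and pushing the window to $+\infty$---is exactly the argument in \cite{Cornfeld-Fomin-Sinai}.
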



\medskip
Now let $\textsf{T}=(X,\mathfs B,\mu,\{T^t\})$ be a flow. It is known that $h_\mu(T^t)=|t|h_\mu(T^1)$ and
$\mathfs P(T^t)=\mathfs P(T^1)$, $\forall t\neq 0$ \cite{Abramov-Entropy-Flows,Gurevich-K-flows}.
The {\em Pinsker $\sigma$--algebra} of $\textsf T$ is defined as $\mathfs P(T^1)$.
$\textsf T$ is said to have {\em completely positive entropy} if its Pinsker factor is trivial iff
$\exists t\neq 0$ s.t. $(X,\mathfs B,\mu,T^t)$ is an automorphism with completely positive entropy.
$\textsf T$ is said to have the {\em K property} if $(X,\mathfs B,\mu,T^1)$ is an automorphism with
the K property iff $\exists t\neq 0$ s.t. $(X,\mathfs B,\mu,T^t)$ is an automorphism with the K property.
$\textsf T$ has the K property iff it has completely positive entropy, and it implies K-mixing \cite{Cornfeld-Fomin-Sinai}.
The next theorem is a tool for proving the K property. Given a $\sigma$--algebra $\mathfs A$
with $T^{-t}\mathfs A\subset\mathfs A$, $\forall t>0$,
let ${\rm Tail}(\mathfs A):=\bigcap\limits_{t>0} T^{-t}\mathfs A$ be the {\em tail $\sigma$--algebra}
of $\mathfs A$.

\begin{thm}[Rokhlin \& Sinai \cite{Rohlin-Sinai}]\label{Thm-Rohlin-Sinai}

Let $\textsf{T}=(X,\mathfs B,\mu,\{T^t\})$ be a flow, and let $\mathfs A\subset\mathfs B$ be a
$\sigma$--algebra s.t.
\begin{enumerate}[{\rm (a)}]
\item $T^{-t}\mathfs A\subset  \mathfs A$, $\forall t>0$,
\item $\bigvee\limits_{t>0} T^{t}\mathfs A=\mathfs B$ modulo $\mu$.
\end{enumerate}
Then $\mathfs P(T)\subset {\rm Tail}(\mathfs A)$ modulo $\mu$.
\end{thm}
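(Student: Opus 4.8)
The plan is to deduce the inclusion $\mathfs P(T)\subset\mathrm{Tail}(\mathfs A)$ from the classical discrete--time Rokhlin--Sinai theorem (the filtration version, also contained in \cite{Rohlin-Sinai}) applied to the time--one automorphism $T:=T^1$, exploiting that hypothesis (a) forces the family $\{T^t\mathfs A\}_{t\in\R}$ to be monotone in $t$, so that nothing is lost by restricting to integer times. Recall that, by definition, the Pinsker $\sigma$--algebra $\mathfs P(T)$ of the flow is the Pinsker $\sigma$--algebra of the automorphism $(X,\mathfs B,\mu,T)$, and that $(X,\mathfs B,\mu)$ is a Lebesgue space on which $T$ is a genuine automorphism, so the discrete theorem applies verbatim once its hypotheses are verified.

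First I would record the monotonicity: for $0<s\le t$ one has $T^{-(t-s)}\mathfs A\subset\mathfs A$ (mod $\mu$) by (a), and applying the measure--preserving invertible map $T^t$ gives $T^s\mathfs A\subset T^t\mathfs A$, whereas applying $T^{-s}$ gives $T^{-t}\mathfs A\subset T^{-s}\mathfs A$. Thus $t\mapsto T^t\mathfs A$ is nondecreasing and $t\mapsto T^{-t}\mathfs A$ is nonincreasing; in particular $T^t\mathfs A\subset T^{\lceil t\rceil}\mathfs A$ and $T^{-\lceil t\rceil}\mathfs A\subset T^{-t}\mathfs A$ for every $t>0$.

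Next I would check the two discrete hypotheses for $\mathfs A$ relative to $T$. Condition (a) with $t=1$ gives $T^{-1}\mathfs A\subset\mathfs A$. The monotonicity gives $\bigvee_{t>0}T^t\mathfs A=\bigvee_{n\ge1}T^n\mathfs A$, so by (b) we get $\bigvee_{n\ge0}T^n\mathfs A=\mathfs B$ mod $\mu$. Hence the discrete Rokhlin--Sinai theorem applies and yields $\mathfs P(T)\subset\bigcap_{n\ge0}T^{-n}\mathfs A$ mod $\mu$. It then remains only to identify this discrete tail with $\mathrm{Tail}(\mathfs A)=\bigcap_{t>0}T^{-t}\mathfs A$: the monotonicity gives $\bigcap_{n\ge1}T^{-n}\mathfs A\subset\bigcap_{t>0}T^{-t}\mathfs A$, the reverse inclusion is trivial, and both coincide with $\bigcap_{n\ge0}T^{-n}\mathfs A$ since $T^{-0}\mathfs A=\mathfs A$ already contains $\bigcap_{n\ge1}T^{-n}\mathfs A$. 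Combining these finishes the proof.

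I expect no real obstacle: all the substance is packed into the cited discrete Rokhlin--Sinai theorem, and the reduction to it is elementary bookkeeping with monotone families of $\sigma$--algebras. The only points requiring a little care are that every inclusion and join must be read modulo $\mu$, and that one should confirm one is genuinely in the setting of an automorphism of a Lebesgue space, so that the discrete theorem is applicable without modification.
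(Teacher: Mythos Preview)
Your proposal is correct. Note, however, that the paper does not actually supply a proof of this theorem: it is stated with attribution to Rokhlin and Sinai \cite{Rohlin-Sinai} and then used as a black box. So there is no ``paper's own proof'' to compare against. Your reduction to the discrete--time filtration version of the Rokhlin--Sinai theorem via the time--one map is the standard way to deduce the flow statement, and your bookkeeping with the monotonicity of $t\mapsto T^t\mathfs A$ is clean and correct.
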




\subsection*{An upper bound for the Pinsker factor of a TMF }
We now construct $\sigma$-algebras as in Theorem \ref{Thm-Rohlin-Sinai} for a topologically transitive TMF.
The construction follows \cite{Gurevich-K-flows,Ratner-Flows-K}.

Let $\sigma_r:\Sigma_r\to\Sigma_r$ be a topologically transitive TMF. By Lemma \ref{LemmaOneSided},
$\sigma_r:\Sigma_r\to\Sigma_r$ is isomorphic to a TMF $\sigma_{r^s}:\Sigma_{r^s}\to\Sigma_{r^s}$
s.t. $r^s$  is independent of the past. Let  $\vartheta_s:\Sigma_r\to\Sigma_{r^s}$ be the isomorphism,
$\vartheta_s\circ\sigma_r^t=\sigma_{r^s}^t\circ\vartheta_s$,  $\forall t\in\R$. Points in $\Sigma_{r^s}$ will
be decorated by over bars as in $(\ov{x},\ov{\xi})$.

Given $(x,\xi)\in\Sigma_r$, let $(\ov{x},\ov{\xi}):=\vartheta_s(x,\xi)$ and define
$$
W^{ss}_{\loc}(x,\xi):=\vartheta_s^{-1}\{(\ov{y},\ov{\xi})\in\Sigma_{r^s}:\ov{y}_0^\infty=\ov{x}_0^\infty\}.
$$
Any two such sets are either equal or disjoint, hence $\{W^{ss}_{\loc}(x,\xi)\}$ is a partition of $\Sigma_r$.
Let $\mathfs W^{ss}_{\loc}$ be the $\sigma$--algebra generated by $\{W^{ss}_{\loc}(x,\xi)\}$.
$\mathfs W^{ss}_{\loc}$ is generated by the countable collection
of sets
$
\vartheta_s^{-1}\{(\ov{y},\ov{\xi})\in \Sigma_{r^s}:\ov{y}_0^{N-1}=\un{a}, \ov{\xi}\in (\alpha,\beta)\}
$
where $N\in\N$, $\un{a}$ is an admissible word of length $N$, and $\alpha,\beta\in\Q$.

Using that $r^s$ is independent of the past and that $\vartheta_s\circ\sigma_r^t=\sigma_{r^s}^t\circ\vartheta_s$,
one shows:
\begin{enumerate}[(a)]
\item $\sigma_r^{-t}[\mathfs W^{ss}_{\loc}]\subset\mathfs W^{ss}_{\loc}$, $\forall t>0$.
\item $\bigvee_{t>0}\sigma_r^t[\mathfs W^{ss}_{\loc}]=\mathfs B$ modulo $\mu$.
\end{enumerate}
Let $\mathfs W^{ss}:={\rm Tail}(\mathfs W^{ss}_{\loc})$.
By Theorem \ref{Thm-Rohlin-Sinai},
$\mathfs P(\sigma_r)\subset \mathfs W^{ss}$ modulo $\mu$.

Next we work  with an isomorphism $\vartheta_u:\Sigma_r\to\Sigma_{r^u}$ where $r^u$ is
independent of the future and $\vartheta_u\circ\sigma_r^t=\sigma_{r^u}^t\circ\vartheta_u$, $\forall t\in\R$.
Denoting points in $\Sigma_{r^u}$ also as $(\ov{x},\ov{\xi}):=\vartheta_u(x,\xi)$, we can define for each
$(x,\xi)\in\Sigma_{r}$ the set
$$
W^{su}_{\loc}(x,\xi):=\vartheta_u^{-1}\{(\ov{y},\ov{\xi}):\ov{y}_{-\infty}^0=\ov{x}_{-\infty}^0\}
$$
and $\mathfs W^{su}_{\loc}$ as the $\sigma$--algebra generated by the partition $\{W^{su}_{\loc}(x,\xi)\}$.
Similarly, $\sigma_r^{t}[\mathfs W^{su}_{\loc}]\subset\mathfs W^{su}_{\loc}$, $\forall t>0$, and
$\bigvee_{t>0}\sigma_r^{-t}[\mathfs W^{su}_\loc]=\mathfs B$ modulo $\mu$.
Let $\mathfs W^{su}:={\rm Tail}(\mathfs W^{su}_\loc)$.
Applying Theorem \ref{Thm-Rohlin-Sinai} to the inverse flow $\{\sigma_r^{-t}\}$ and using that it
has the same Pinsker $\sigma$--algebra as $\{\sigma_r^t\}$, we find that
$\mathfs P(\sigma_r)\subset \mathfs W^{su}$ modulo $\mu$.
We just proved:

\begin{thm}[\cite{Gurevich-K-flows,Ratner-Flows-K}]
Let $\sigma_r:\Sigma_r\to\Sigma_r$ be a TMF, and let $\mu$ be an ergodic $\sigma_r$--invariant
probability measure, not supported on a single orbit. Then
$
\mathfs P(\sigma_r)\subset \mathfs W^{ss}\cap \mathfs W^{su}$ modulo $\mu$.
\end{thm}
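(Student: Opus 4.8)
The statement follows by assembling the construction carried out in the paragraphs just above, so the plan is simply to describe, in order, the pieces that go into it: the goal is to produce two $\sigma$-algebras $\mathfs W^{ss}$ and $\mathfs W^{su}$, each dominating $\mathfs P(\sigma_r)$, via the Rokhlin--Sinai criterion (Theorem \ref{Thm-Rohlin-Sinai}).

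First I would apply Lemma \ref{LemmaOneSided} to replace $(\Sigma_r,\sigma_r,\mu)$ by an isomorphic TMF $\sigma_{r^s}$ whose roof function $r^s$ is independent of the past, and transport everything back along the conjugacy $\vartheta_s$; the hypothesis that $\mu$ is not carried by a single orbit keeps us within the non-atomic framework in which the Pinsker and Rokhlin--Sinai theory is stated. Define the local strong stable cells $W^{ss}_{\loc}(x,\xi):=\vartheta_s^{-1}\{(\ov y,\ov\xi):\ov y_0^\infty=\ov x_0^\infty\}$ and let $\mathfs W^{ss}_{\loc}$ be the $\sigma$-algebra they generate. A point worth making explicit is that $\mathfs W^{ss}_{\loc}$ is \emph{countably} generated, by the sets indexed by an admissible word $\un a$ and a pair of rationals bounding the height; this is what makes $\mathfs W^{ss}_{\loc}$ a bona fide factor of a Lebesgue space.

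Next I would verify the two hypotheses of Theorem \ref{Thm-Rohlin-Sinai}. For $\sigma_r^{-t}[\mathfs W^{ss}_{\loc}]\subset\mathfs W^{ss}_{\loc}$ with $t>0$: flowing downward in the suspension, once the orbit crosses a base copy the first return time is a function of the future only --- this is precisely where independence of the past enters --- so a backward-flowed cell is a union of cells, i.e. the partition only coarsens under $\sigma_{r^s}^{-t}$. For $\bigvee_{t>0}\sigma_r^t[\mathfs W^{ss}_{\loc}]=\mathfs B$ modulo $\mu$: flowing forward refines the partition, and $\sigma_r^t[\mathfs W^{ss}_{\loc}]$ separates points whose $\sigma_r^{-t}$-images have distinct futures, hence distinct coordinates $x_{-N}^\infty$ with $N=N(t)\to\infty$ since $r$ is bounded away from $0$ and $\infty$; combined with the recovery of the height coordinate from the flow direction and the H\"older regularity of $r^s$ (Lemma \ref{Lemma-BW}), this exhausts $\mathfs B$ up to $\mu$-null sets. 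Granting (a)--(b), Theorem \ref{Thm-Rohlin-Sinai} yields $\mathfs P(\sigma_r)\subset{\rm Tail}(\mathfs W^{ss}_{\loc})=:\mathfs W^{ss}$ modulo $\mu$.

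Finally I would run the mirror-image construction: use Lemma \ref{LemmaOneSided} once more to obtain $r^u$ independent of the \emph{future}, define $W^{su}_{\loc}$ and $\mathfs W^{su}_{\loc}$, check the analogues of (a)--(b) for the \emph{reversed} flow $\{\sigma_r^{-t}\}$, and apply Theorem \ref{Thm-Rohlin-Sinai} to that flow --- legitimate because time reversal does not change the Pinsker $\sigma$-algebra, as recorded in the review of general theory. This gives $\mathfs P(\sigma_r)\subset\mathfs W^{su}:={\rm Tail}(\mathfs W^{su}_{\loc})$ modulo $\mu$, and intersecting the two inclusions proves the theorem. The only real obstacle is bookkeeping: one must track how the Bowen--Walters identification $(x,r(x))\sim(\sigma x,0)$ interacts with each one-sided coding and with the height coordinate, so as to be sure that independence of the past genuinely forces $\mathfs W^{ss}_{\loc}$ to decrease along the backward flow and that the forward refinements genuinely generate everything; there is no conceptual difficulty beyond that.
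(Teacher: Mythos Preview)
Your proposal is correct and follows essentially the same approach as the paper: the paper carries out exactly this construction in the paragraphs immediately preceding the theorem statement (which is stated as a summary of what was just done), using Lemma \ref{LemmaOneSided} to pass to one-sided roof functions, defining $\mathfs W^{ss}_{\loc}$ and $\mathfs W^{su}_{\loc}$ via the conjugacies $\vartheta_s,\vartheta_u$, verifying (a)--(b), and applying Theorem \ref{Thm-Rohlin-Sinai} to the flow and to its inverse. Your explanation of why (a) and (b) hold is in fact more detailed than the paper's, which simply asserts them.
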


\begin{cor}\label{Corollary-Pinsker-Equiv-Rel}
Let $\sigma_r:\Sigma_r\to\Sigma_r$ be a TMF, and let $\mu$ be an ergodic $\sigma_r$--invariant
probability measure, not supported on a single orbit.
If $f:\Sigma_r\to\R$ is $\mathfs P(\sigma_r)$--measurable, then there is a set $X$ of full $\mu$--measure
s.t. for every $(x,\xi),(y,\eta)\in X$:
\begin{enumerate}[$(1)$]
\item If $(y,\eta)\in W^{ss}(x,\xi)$ then $f(x,\xi)=f(y,\eta)$.
\item If $(y,\eta)\in W^{su}(x,\xi)$ then $f(x,\xi)=f(y,\eta)$.
\end{enumerate}
\end{cor}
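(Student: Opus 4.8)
The plan is to deduce everything from two ingredients already in hand: the inclusion $\mathfs P(\sigma_r)\subset\mathfs W^{ss}\cap\mathfs W^{su}$ modulo $\mu$ proved just above, and the presentations $\mathfs W^{ss}=\bigcap_{t>0}\sigma_r^{-t}\mathfs W^{ss}_{\loc}$, $\mathfs W^{su}=\bigcap_{t>0}\sigma_r^{t}\mathfs W^{su}_{\loc}$, together with the fact that $\mathfs W^{ss}_{\loc}$ and $\mathfs W^{su}_{\loc}$ are countably generated with atoms exactly the local strong (un)stable sets $W^{ss}_{\loc}(z)$, $W^{su}_{\loc}(z)$. This last point is visible from the explicit countable generating families $\vartheta_s^{-1}\{\ov y_0^{N-1}=\un a,\ \ov\xi\in(\alpha,\beta)\}$ exhibited in the construction of $\mathfs W^{ss}_{\loc}$: intersecting over all $N$ and all (correctly chosen) $\un a$, and over all rationals $\alpha<\ov\xi<\beta$, recovers $W^{ss}_{\loc}(z)$.

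First I would replace $f$ by good versions. Since $(\Sigma_r,\mathfs B,\mu)$ is a Lebesgue space, $f$ is $\mathfs P(\sigma_r)$--measurable, and $\mathfs P(\sigma_r)\subset\mathfs W^{ss}$ modulo $\mu$, there is a genuinely $\mathfs W^{ss}$--measurable $f_s$ with $f_s=f$ $\mu$--a.e.; similarly a genuinely $\mathfs W^{su}$--measurable $f_u$ with $f_u=f$ $\mu$--a.e. Put $X:=\{f=f_s\}\cap\{f=f_u\}$, a set of full $\mu$--measure. It then suffices to prove the \emph{everywhere} statements that $f_s$ is constant along every strong stable set and $f_u$ is constant along every strong unstable set: for $(x,\xi),(y,\eta)\in X$ with $(y,\eta)\in W^{ss}(x,\xi)$ this gives $f(x,\xi)=f_s(x,\xi)=f_s(y,\eta)=f(y,\eta)$, which is $(1)$, and the analogous computation with $f_u$ gives $(2)$.

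Next I would unwind the tail structure. Since $\mathfs W^{ss}\subset\sigma_r^{-t}\mathfs W^{ss}_{\loc}$ for every $t>0$, the function $f_s$ is $\sigma_r^{-t}\mathfs W^{ss}_{\loc}$--measurable, hence $f_s\circ\sigma_r^{-t}$ is $\mathfs W^{ss}_{\loc}$--measurable, hence constant on each atom $W^{ss}_{\loc}(z)$. Transporting by $\sigma_r^{t}$: whenever $\sigma_r^{t}(p)$ and $\sigma_r^{t}(q)$ lie in a common local strong stable set for some $t>0$, one has $f_s(p)=f_s(q)$. Symmetrically, whenever $\sigma_r^{-t}(p)$ and $\sigma_r^{-t}(q)$ lie in a common local strong unstable set for some $t>0$, one has $f_u(p)=f_u(q)$. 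So the whole matter reduces to the geometric claim that $W^{ss}(x,\xi)=\bigcup_{t\ge0}\sigma_r^{-t}\big(W^{ss}_{\loc}(\sigma_r^{t}(x,\xi))\big)$ and, correspondingly, $W^{su}(x,\xi)=\bigcup_{t\ge0}\sigma_r^{t}\big(W^{su}_{\loc}(\sigma_r^{-t}(x,\xi))\big)$.

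I expect this geometric claim to be the main obstacle. To prove $W^{ss}(x,\xi)\subset\bigcup_{t\ge0}\sigma_r^{-t}(W^{ss}_{\loc}(\sigma_r^{t}(x,\xi)))$ one starts from $d_r(\sigma_r^{\tau}(x,\xi),\sigma_r^{\tau}(y,\eta))\to0$, passes to $\Sigma_{r^s}$ via the conjugacy $\vartheta_s$ of Lemma~\ref{LemmaOneSided}, and uses crucially that $r^s$ is independent of the past: once the two forward orbits share a long initial block of future symbols, their forward Birkhoff sums of $r^s$ coincide, so the height coordinates --- already forced to be asymptotically equal in the Bowen--Walters metric --- must in fact become equal, and at that moment both points sit in a single $W^{ss}_{\loc}$. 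For non--pre--periodic $x$ one can instead read this off the Bowen--Marcus condition (Lemma~\ref{Lemma-P}$(1)$) together with the shift and cocycle identities; since the non--pre--periodic points have full $\mu$--measure, it suffices to intersect $X$ with them. The argument for $W^{su}$ is identical with time reversed, using that $r^u$ is independent of the future. Granting the geometric claim, the reduction above finishes $(1)$ and $(2)$.
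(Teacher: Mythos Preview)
Your proposal is correct and follows essentially the same route as the paper: both arguments rest on the inclusion $\mathfs P(\sigma_r)\subset\mathfs W^{ss}\cap\mathfs W^{su}$ modulo $\mu$, the fact that $\mathfs W^{s\tau}_{\loc}$ is countably generated with atoms $W^{s\tau}_{\loc}(z)$, and the geometric claim that points on the same $W^{ss}$ eventually land in a common $W^{ss}_{\loc}$ under the flow. The only cosmetic difference is that the paper reduces to indicator functions $f=1_E$ and chooses explicit sets $E_i\in\sigma_r^{-i}\mathfs W^{ss}_{\loc}$ with $\mu(E\triangle E_i)=0$, whereas you pass once and for all to genuinely tail--measurable versions $f_s,f_u$; the paper also simply asserts the geometric claim (``$\sigma_r^t(y,\eta)\in W^{ss}_{\loc}(\sigma_r^t(x,\xi))$ for $t$ large enough''), while you sketch its justification via the Bowen--Marcus condition after restricting to non--pre--periodic points.
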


\begin{proof}
Recall the definitions of $W^{ss}(x,\xi)$ and $W^{su}(x,\xi)$ on page \pageref{Page-strong-manifold}.
We prove (1), and leave (2) to the reader. It is enough to prove this for $f=1_E$ where $E\in\mathfs P(\sigma_r)$.
Since $\mathfs P(\sigma_r)\subset\mathfs W^{ss}={\rm Tail}(\mathfs W^{ss}_\loc)$,
there is a sequence of sets $E_i\in \sigma_r^{-i}(\mathfs W^{ss}_{\loc})$ s.t.
$\mu(E\triangle E_i)=0$. The set
$
X:=\Sigma_r\setminus[(\cup_{i\geq 1}E\triangle E_i)\cup\{(x,\xi):x\text{ is
 pre-periodic}\}]
$
has full $\mu$--measure.

If $(x,\xi),(y,\eta)\in X$ with $(y,\eta)\in W^{ss}(x,\xi)$, then $\sigma_r^t(y,\eta)\in W^{ss}_{\loc}(\sigma_r^t(x,\xi))$
for $t$ large enough. In particular, this holds for some $t=i\in\N$.
We want to show that $(x,\xi)\in E\Leftrightarrow (y,\eta)\in E$. By symmetry, it is enough that
$(x,\xi)\in E\Rightarrow (y,\eta)\in E$.

Let $(x,\xi)\in E$. Then
$(x,\xi)\not\in E\triangle E_i\Rightarrow (x,\xi)\in E_i\Rightarrow\sigma_r^i(x,\xi)\in\sigma_r^i(E_i)\in \mathfs W^{ss}_{\loc}$.
The atom of $\mathfs W^{ss}_{\loc}$ which
contains $\sigma_r^i(x,\xi)$ is $W^{ss}_{\loc}((\sigma_r^i(x,\xi))$, so
$\sigma_r^i(y,\eta)\in W^{ss}_{\loc}((\sigma_r^i(x,\xi))\subset \sigma_r^i(E_i)\Rightarrow (y,\eta)\in E_i
\overset{!}{\Rightarrow}(y,\eta)\in E$ ($\overset{!}{\Rightarrow}$ is because $(y,\eta)\in X$).
\end{proof}

%
%
%
%

\subsection*{The Pinsker factor in the non-arithmetic case}
Let $\sigma:\Sigma\to\Sigma$ be a TMS. A H\"older continuous  $r:\Sigma\to\R$ is called {\em arithmetic}, if there are $\theta\in\R$, $\theta\neq 0$, and  $h:\Sigma\to S^1$  H\"older continuous s.t.  $e^{i\theta r}=h/h\circ\sigma$  \cite{Guivarch-Hardy}.

\begin{thm}\label{Thm-WM-K}
Let $\sigma_r:\Sigma_r\to\Sigma_r$ be a topologically transitive TMF, and let $\mu$ be an equilibrium
measure of a bounded H\"older continuous function with finite pressure. The following are equivalent:
\begin{enumerate}[$(1)$]
\item $r$ is not arithmetic.
\item $\mu$ is weak mixing.
\item $\mu$ is mixing.
\item $\mu$ has the K property, whence a trivial Pinsker factor.
\end{enumerate}
In particular, if one equilibrium measure of a bounded H\"older continuous function satisfies one of $(2)$--$(4)$,
then all equilibrium measures of bounded H\"older continuous functions satisfy all of $(2)$--$(4)$.
\end{thm}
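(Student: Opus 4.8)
The plan is to prove the cycle of implications $(4)\Rightarrow(3)\Rightarrow(2)\Rightarrow(1)\Rightarrow(4)$, with the real content being $(1)\Rightarrow(4)$. The implications $(4)\Rightarrow(3)\Rightarrow(2)$ are general facts (the K property implies mixing implies weak mixing). For $(2)\Rightarrow(1)$ I would argue contrapositively: if $r$ is arithmetic, say $e^{i\theta r}=h/h\circ\sigma$ with $h:\Sigma\to S^1$ H\"older and $\theta\neq 0$, then the function $f(x,\xi):=h(x)e^{-i\theta\xi}$ is well-defined on $\Sigma_r$ (it respects the identification $(x,r(x))\sim(\sigma x,0)$ precisely because $h(x)e^{-i\theta r(x)}=h(\sigma x)$) and satisfies $f(\sigma_r^t(x,\xi))=e^{-i\theta t}f(x,\xi)$, so $f$ is an eigenfunction with eigenvalue $e^{-i\theta}$; hence $\mu$ is not weak mixing.

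The heart of the matter is $(1)\Rightarrow(4)$: assuming $r$ is not arithmetic, show $\mu$ has a trivial Pinsker factor. Here I would pass to the induced measure $\nu$ (recall $\mu$ ergodic $\Leftrightarrow \nu$ ergodic, and by the earlier results $\nu$ is globally supported, has local product structure, and its one-sided version is the Gibbs-type measure $h^s\xi^s$ from Theorem~\ref{Thm-RPF}). If $\mu$ sits on a single periodic orbit the statement is vacuous (zero entropy, and a periodic orbit forces $r$ arithmetic), so assume otherwise. By the Pinsker upper bound established just above the statement, any $\mathfs P(\sigma_r)$-measurable function $f$ is, off a null set, constant along strong stable manifolds $W^{ss}$ and along strong unstable manifolds $W^{su}$ (Corollary~\ref{Corollary-Pinsker-Equiv-Rel}). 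The strategy is the classical Hopf-type argument combined with the non-arithmeticity: the equivalence relation generated by $W^{ss}\cup W^{su}\cup\{$flow orbits$\}$ is, thanks to non-arithmeticity of $r$, ergodic for $\mu$ — this is exactly the point where one uses that the Bowen--Marcus cocycles $P^s,P^u$ together with the flow direction generate a dense set of "time shifts", so that a function constant on all three families must be $\mu$-a.e. constant. Concretely I would: (i) use the local product structure of $\nu$ and Corollary~\ref{Cor-Prod-Structure} to reduce to showing that a Borel set $A$ which is a union of local strong-stable and local strong-unstable sets, and flow-saturated, has $\mu(A)\in\{0,1\}$; (ii) using Lemma~\ref{Lemma-P}(1), the "thickness" of $A$ in the flow direction at a point $x$ is governed by the closed subgroup of $\R$ generated by the values $\{P^s(x,y)-P^u(x,z)\}$ as $y$ ranges over $W^{ws}_{\loc}$ and $z$ over $W^{wu}_{\loc}$ of points in the support; (iii) show this subgroup is all of $\R$ precisely when $r$ is not arithmetic — if it were a proper closed subgroup $\theta^{-1}2\pi\Z$, one reconstructs a H\"older solution of $e^{i\theta r}=h/h\circ\sigma$, contradicting non-arithmeticity. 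Then the flow-saturated set $A$ is essentially a union of full cylinders sets (up to $\nu$-null sets) that is $\sigma$-invariant, so ergodicity of $\nu$ (Theorem~\ref{Thm-RPF}(5)) forces $\mu(A)\in\{0,1\}$, giving $\mathfs P(\sigma_r)=\{\emptyset,\Sigma_r\}$, i.e. the K property by Rokhlin--Sinai.

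The main obstacle is step (iii) above — showing that failure of the "period subgroup" to be all of $\R$ forces arithmeticity of $r$ — and, more technically, making the Hopf argument rigorous in the countable-alphabet, non-uniformly-structured setting: one must control the local product structure quantitatively (which the constants $C_{vw}$ of Theorem~\ref{Prop-Prod-Strctr-measure} provide) and handle the fact that $P^s,P^u$ are only defined on the weak (un)stable sets of non-pre-periodic points, which is harmless since those have full $\nu$-measure. The final sentence of the theorem ("if one equilibrium measure satisfies one of $(2)$--$(4)$ then all do") is then immediate: properties $(2)$--$(4)$ are each equivalent to the single condition $(1)$ on $r$, which does not depend on the choice of potential or equilibrium measure.
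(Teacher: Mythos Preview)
Your proposal is essentially the same approach as the paper's: the easy implications $(4)\Rightarrow(3)\Rightarrow(2)\Rightarrow(1)$ are handled identically, and for $(1)\Rightarrow(4)$ both you and the paper run a Hopf-type argument using the Bowen--Marcus cocycles, the local product structure of $\nu$, and a ``holonomy group'' of time-shifts accumulated along $su$--loops, reducing the problem to showing this group is all of $\R$ iff $r$ is non-arithmetic. The paper phrases this as the contrapositive (non-trivial Pinsker $\Rightarrow$ holonomy group $=c\Z\Rightarrow r$ arithmetic), but the content is the same.

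Two technical points you gloss over that the paper handles explicitly and that you should not skip. First, a Pinsker-measurable function is \emph{not} a priori constant (or even continuous) along flow orbits, so your phrase ``a function constant on all three families'' is premature: the paper replaces the Pinsker function $F$ by a short-time average $H=\frac{1}{\delta}\int_0^\delta F\circ\sigma_r^t\,dt$, which is still Pinsker-measurable and non-constant but now has $t\mapsto H(\sigma_r^t(x,\xi))$ continuous; only then does density of the holonomy group in $\R$ force flow-invariance and hence (by ergodicity of $\mu$) constancy. Second, your step (iii) only treats proper subgroups of the form $c\Z$ with $c>0$; the case $c=0$ (holonomy group $=\{0\}$) needs a separate argument, since then there is no $\theta$ to plug into $e^{i\theta r}=h/h\circ\sigma$. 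The paper rules this out directly by showing that $G=\{0\}$ would make $P^s(\,\cdot\,,x)$ extend to a H\"older $U$ on $\Sigma$ with $r=U\circ\sigma-U$, contradicting $\inf r>0$ via Poincar\'e recurrence.
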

If  $\Sigma$ is a subshift of finite type, then  the equivalences of (2)--(4)  are due to Ratner
\cite{Ratner-Flows-K} (a special case was done before by Gurevich \cite{Gurevich-K-flows}), and
$(1)\Leftrightarrow(2)$ is due to Parry \& Pollicott \cite[Prop. 6.2]{Parry-Pollicott-Asterisque}.

\begin{proof}
 $(4)\Rightarrow(3)$ by general theory, and $(3)\Rightarrow(2)$ is obvious. $(2)\Rightarrow(1)$ because if  $e^{i\theta r}=h/h\circ\sigma$ for some $\theta\neq 0$ and $h:\Sigma\to S^1$  continuous, then $F(x,\xi):=e^{-i\theta\xi}h(x)$ satisfies $F\circ \sigma_r^t=e^{-i\theta t}F$, $\forall t\in\R$. By the weak mixing assumption,  $F$ is constant $\mu$--a.e., whence everywhere (equilibrium measures of H\"older potentials on a topologically transitive TMF are globally supported).
Thus $\theta=0$.

\medskip
It remains to show that (1)$\Rightarrow$(4). We prove that if the Pinsker $\sigma$--algebra is not trivial
then $r$ is arithmetic. Assume that $\mathfs P(\sigma_r)$ is not trivial, and fix a bounded Pinsker-measurable
function $F$ that is not constant $\mu$--a.e. Let  $F_\delta:=\frac{1}{\delta}\int_0^\delta F\circ \sigma_r^t dt$.
Note that $F_\delta\xrightarrow[\delta\to 0^+]{L^1}F$, thus $F_\delta$ is not constant $\mu$--a.e
for any $\delta$ small enough. Fix one such $\delta$ and let $H:=F_\delta$. $H$ is a bounded
Pinsker-measurable function that is not constant $\mu$--a.e. for which the map
$t\mapsto (H\circ\sigma_r^t)(x,\xi)$ is continuous, $\forall(x,\xi)\in\Sigma_r$. We will use $H$ to prove that
$r$ is arithmetic.
Let $\nu$ be the induced measure of $\mu$.  Recall the definition of the cocycles $P^s, P^u$ (see Lemma \ref{Lemma-P}) and the measures $\nu^s_{x}$ on $W^s_\loc(x)$ defined in (\ref{nu-s}).

\medskip
\noindent
{\sc Claim  1.} {\em
There is a Borel set $E\subset\Sigma$ of full $\nu$--measure such that:\label{Page-Set-E}
\begin{enumerate}[$(1)$]
\item $E$ is $\sigma$--invariant and contains no pre-periodic points.
\item For every $(x,\xi),(y,\eta)$ s.t. $x,y\in E$:
\begin{enumerate}[$(2.1)$]
\item If $(y,\eta)\in W^{ss}(x,\xi)$ then $H(y,\eta)=H(x,\xi)$.
\item If $(y,\eta)\in W^{su}(x,\xi)$ then $H(y,\eta)=H(x,\xi)$.
\end{enumerate}
\item For every $x\in E$, $\nu^s_x(E^c)=\nu^u_x(E^c)=0$.
\end{enumerate}}

\medskip
\noindent
{\em Proof of Claim $1$.\/}
Let $E_0:=\{x\in\Sigma:x\textrm{ is not pre-periodic}\}$. $E_0$ has full $\nu$--measure, since $\nu$ is ergodic
and globally supported.
By Corollary \ref{Corollary-Pinsker-Equiv-Rel}, there is $X\subset\Sigma_r$ of full $\mu$--measure s.t.
(2) holds for all $(x,\xi),(y,\eta)\in X$.
Since  $\mu$ is equivalent to $\nu\times d\xi$,
$$
E_1:=\{x\in E_0: (x,\xi)\in X\textrm{ for Lebesgue a.e. }\xi\in[0,r(x))\}
$$
has full $\nu$--measure. We claim that $E_1$ satisfies (2).

We prove (2.1) and leave (2.2) to the reader.
Since $x,y\in E_1$, there is an open neighborhood $U\subset\R$ of $0$ s.t.
$(x,\xi+t),(y,\eta+t)\in X$ for Lebesgue a.e. $t\in U$. Find $t_k\xrightarrow[k\to\infty]{}0$ s.t.
$(x,\xi+t_k),(y,\eta+t_k)\in X$. By Lemma \ref{Lemma-P}(1), $(y,\eta+t_k)\in W^{ss}(x,\xi+t_k)$,
therefore by the definition of $X$ we have $H(x,\xi+t_k)=H(y,\eta+t_k)$.
Passing to the limit, and using that $t\mapsto (H\circ\sigma_r^t)(x,\xi)$ and $t\mapsto (H\circ\sigma_r^t)(y,\eta)$
are continuous, we conclude that $H(x,\xi)=H(y,\eta)$.

Now consider $E_2:=\bigcap_{n\in\Z}\sigma^n(E_1)$. $E_2$ has full $\nu$--measure and satisfies
(1)--(2) but not necessarily (3).
We define $E_3, E_4,\ldots$ by induction as
$$
E_n:=\{x\in E_{n-1}: \nu^s_{\s^k(x)}(E_{n-1}^c)=\nu^u_{\sigma^k(x)}(E_{n-1}^c)=0, \forall k\in\Z\}.
$$
$\{E_n\}$ is a decreasing sequence of $\sigma$--invariant sets of full $\nu$--measure each,
by Corollary  \ref{Cor-Prod-Structure}, thus
$
E:=\bigcap_{n=4}^\infty E_n
$
is $\sigma$--invariant set of full $\nu$--measure. $E$ satisfies (1)--(2) of the claim, since $E\subset E_0\cap E_1$.
To see that it also satisfies (3), just note that if $x\in E$ and $\tau=s,u$ then
$\nu^\tau_{x}(E^c)=\nu^\tau_{x}(\bigcup_{n\geq 3}E^c_n)=\lim\nu^\tau_{x}(E^c_n)=0$.

\medskip
\noindent
{\sc Construction of the holonomy group:}  Recall the {\em weak stable} and
{\em weak unstable manifolds} of $x\in\Sigma$:
\begin{enumerate}[$\circ$]
\item $W^{ws}(x):=\{y:\exists m,n\text{ s.t. }x_m^\infty=y_n^\infty\}$.
\item $W^{wu}(x):=\{y:\exists m,n\text{ s.t. }x_{-\infty}^m=y_{-\infty}^n\}$.
\end{enumerate}
The following constructions are motivated by \cite{Brin-Group-Extensions}:

\begin{enumerate}[$\circ$]
\item {\em $su$--path}: A finite sequence of points $\gamma=\<x^0,\ldots,x^n\>$ in $E$ s.t.
$x^i\in W^{w\tau_i}(x^{i-1})$ for some $\tau_i\in\{s,u\}$. If $x^0=x^n=x$, then $\gamma$ is called an
{\em $su$--loop at $x$}. \label{su-loop}
\item {\em Lift of $su$--path:}
Suppose $0\leq \theta<r(x^0)$. The {\em lift} of $\gamma=\<x^0,\ldots,x^n\>$ at $z_0:=(x^0,\theta)$ is
$\<z_0,\ldots,z_n\>\subset\Sigma_r$  where  $z_i=\sigma_r^{\theta+t_i}(x^i,0)$, and  $z_i\in W^{s\tau_i}(z_{i-1})$,
$i=1,\ldots,n$. The parameters $t_i$ are uniquely determined by the Bowen-Marcus condition, see Lemma \ref{Lemma-P}(1):
$t_0:=0$, $t_i=t_{i-1}+P^{\tau_i}(x^{i-1},x^i)$.
\item {\em Weight of $su$--loop:}
$
P(\gamma):=t_n=\sum_{i=1}^n P^{\tau_i}(x^{i-1},x^i).
$
\end{enumerate}

For $x\in E$, let $G_{x}':=\{P(\gamma):\textrm{$\gamma$ is an $su$--loop at $x$}\}$. We will show that there is a
closed subgroup $G\subset\R$ s.t. $G_x:=\ov{G_{x}'}=G$, $\forall x\in E$.

\medskip
\noindent
{\sc Holonomy group:} It is the closed subgroup $G\subset\R$ s.t.
$G_x=G$ for some (all) $x\in E$. \label{def-holonomy-group}

\medskip
We first show that $G_x=c\Z$ for some $c\neq 0$ independent of $x\in E$, and then use this to prove that $\exp[\frac{2\pi i}{c}r]$ is a multiplicative coboundary.

\medskip
\noindent
{\sc Claim 2:} {\em There exists $c\neq 0$ s.t.
 $G_x=c\Z$, $\forall x\in E$.}

\medskip
\noindent
{\em Proof of Claim $2$.\/} We divide the proof into few steps. Fix $x\in E$.

\medskip
\noindent
{\sc Step 1.\/} {\em  $G_x', G_x$ are additive subgroups of $\R$, and
$G_{\sigma(x)}'=G_{x}'$, $G_{\sigma(x)}=G_{x}$.}

\medskip
\noindent
{\em Proof.\/} It is enough to prove the claims for $G_x'$. $G_x'$ is an additive group:
\begin{enumerate}[$\circ$]
\item $G_{x}'+G_{x}'\subset G_{x}'$, because
$P(\gamma_1)+P(\gamma_2)=P(\gamma_1\vee\gamma_2)$ where $\gamma_1\vee\gamma_2$ is the concatenation of $\gamma_1$ and $\gamma_2$.
\item $G_{x}'\owns 0$, because $P(\<x,x\>)=0$.
\item $G_{x}'=-G_{x}'$, because $P(\<x^n,\ldots,x^0\>)=-P(\<x^0,\ldots,x^n\>)$.
\end{enumerate}

Now we show that $G_{\sigma(x)}'=G_{x}'$. Let $\gamma=\<x^0,\ldots,x^n\>$ be an $su$--loop at $x$,
and let $\sigma(\gamma):=\<\sigma(x^0),\ldots,\sigma(x^n)\>$. By Lemma \ref{Lemma-P}(2),
$P^{\tau_i}(\sigma(x^{i-1}),\sigma(x^{i}))-P^{\tau_i}(x^{i-1},x^i)=r(x^{i-1})-r(x^{i})$. Summing this over $i$
gives $P(\sigma(\gamma))-P(\gamma)=r(x^n)-r(x^0)=0$.

\medskip
\noindent
{\sc Step 2.\/} {\em  There is a closed subgroup $G\subset\R$ s.t. $G_x=G$, $\forall x\in E$.}

\medskip
\noindent
{\em Proof.\/} We claim that $x\mapsto G_x$ is constant on $E\cap [v]$, for every state $v$.
%
%
Take $x,y\in E\cap[v]$, and define $\pi_{xy}:W^s_{\loc}(x)\to W^s_{\loc}(y)$ by
$\pi_{xy}(\cdot)=[\cdot,y]$. $\pi_{xy}$ is measure-preserving:
$$
\nu^s_{x}\circ\pi_{xy}^{-1}=\nu\circ (p^s_{x})^{-1}\circ\pi_{xy}^{-1}=\nu\circ (\pi_{xy}\circ p^s_{x})^{-1}=
\nu\circ (p^s_{y})^{-1}=\nu^s_{y}.
$$
$E$ has full $\nu^s_{x}$--measure in $W^s_{\loc}(x)$. Since $\pi_{xy}$ is measure-preserving,
$\pi_{xy}[E\cap W^s_{\loc}(x)]$ has full $\nu^s_{y}$--measure in $W^s_{\loc}(y)$. Thus
$\pi_{xy}[E\cap W^s_{\loc}(x)]\cap E\neq\emptyset$, therefore $\exists z\in E\cap W^s_{\loc}(x)$ s.t.
$w:=[z,y]\in E\cap W^s_{\loc}(y)$. By the definition of the Smale product, $W^u_{\loc}(z)=W^u_{\loc}(w)$.
In summary, we found $z\in W^s_{\loc}(x)\cap E$, $w\in W^s_{\loc}(y)\cap E$ s.t. $W^u_{\loc}(z)=W^u_{\loc}(w)$.

Every element of $G_{x}'$ equals $P(\gamma)$ for some $su$--loop $\gamma$ at $x$.
Consider the concatenation
$
\gamma':=\<y,w,z,x\>\vee\gamma\vee\<x,z,w,y\>.
$
This is an $su$--loop at $y$ with $P(\gamma')=P(\<y,w,z,x,z,w,y\>)+P(\gamma)=P(\gamma)$.
Since $\gamma$ is arbitrary, this gives the inclusion
$G_x\subset G_y$.  By symmetry, $G_x=G_y$.

We see that for every $v$, there is a group $G_v$ s.t.  $G_x=G_v$, $\forall x\in E\cap [v]$.
Fix some state $v_0$. Since $\sigma:\Sigma\to\Sigma$ is topologically transitive, for any state $v$
there is an admissible path $v_0=a_0\to\cdots\to a_n=v$. The measure $\nu$ is globally supported,
thus we can take $z\in E\cap [\un{a}]$. By Step 1,
$G_{v_0}=G_{z}=G_{\sigma(z)}=\cdots=G_{\sigma^n(z)}=G_v$, whence $G_v=G_{v_0}$ for all vertices $v$.
This proves Step 2.

\medskip
\noindent
{\sc Step 3.\/} {\em $G$ equals $c\Z$ for some $c\in\R$.}

\medskip
\noindent
{\em Proof.\/}
$G$ is a closed additive subgroup of $\R$, so  either $G=\R$ or $G=c\Z$ for some $c\in\R$.
We will show that if $G=\R$ then $H$ is constant $\mu$--a.e., a contradiction.

We implement the classical Hopf argument. The key observation is that $H$ is  constant on the
intersection of the strong (un)stable manifolds of $\sigma_r$ with $E$, thanks to Claim 1(2).
Suppose  $\gamma=\<x^0,\ldots,x^n\>$ is an $su$--path, fix some $0\leq \theta<r(x^0)$, and let
$\<z_0,\ldots,z_n\>\subset\Sigma_r$ be the lift of $\gamma$ at $z_0:=(x^0,\theta)$.
Since $x^i\in E$, we have $H(z_0)=H(z_1)=\cdots=H(z_n)$.
In particular, if $x\in E$ and $\gamma$ is an $su$--loop at $x$, then $H(x,\theta)=(H\circ\sigma_r^{P(\gamma)})(x,\theta)$.

If $G=\R$ then the set of weights $P(\gamma)$ is dense in $\R$. Since $t\mapsto (H\circ\sigma_r^t)(x,\theta)$ is continuous, $H(x,\theta)=(H\circ\sigma_r^t)(x,\theta)$ for all $t\in\R$.
This proves that $H\circ\sigma_r^t=H$ on $\{(x,\theta)\in\Sigma_r:x\in E\}$.
Using that $\mu$ is ergodic (Corollary \ref{Cor-Eq-are-Ergodic}), we conclude that $H$ is constant $\mu$--a.e.,
a contradiction. Thus $G=c\Z$ for some $c\in\R$.

\medskip
\noindent
{\sc Step 4.\/}  $c\neq 0$.

\medskip
\noindent
{\em Proof.\/} Suppose  by contradiction that $G=\{0\}$.
We will show that  $r=U\circ\sigma-U$ for some $U:\Sigma\to\mathbb R$ continuous, and derive a contradiction.
Recall the definitions of $W^{ws}(x),W^{s}_{\loc}(x)$ on page \pageref{Page-Weak-(un)stable}.
Fix $x\in E$ and define $\widetilde U$ on $W^{ws}(x)\cap E$ by
$\widetilde U(y)=P^s(y,x)$. By Lemma \ref{Lemma-P}(3),
$$
\wt{U}(\sigma(y))-\wt{U}(y)=P^s(\sigma(y),x)+P^s(x,y)=P^s(\sigma(y),y)=r(y).
$$
Our plan is to show that $W^{ws}(x)\cap E$ is dense in $\Sigma$, and $\wt{U}$ is uniformly continuous on
$W^{ws}(x)\cap E$. Thus the unique continuous extension to $\Sigma$ satisfies   $U\circ\sigma-U=r$.

\medskip
\noindent
{\em Proof that $W^{ws}(x)\cap E$ is dense in $\Sigma$\/:} Let $C:={_{-n}[}v_{-n},\ldots,v_n]$ be a non-empty
cylinder in $\Sigma$.
Since $\sigma:\Sigma\to\Sigma$ is topologically transitive,  there is an admissible path
$v_n\to v_{n+1}\to\cdots\to v_{n+k}\to x_0$. Now proceed as follows:
\begin{enumerate}[$\circ$]
\item Pick some $w\in C$, and define $y$ by
$y_{-\infty}^{n}=w_{-\infty}^n$, $y_{n+1}^{n+k}=(v_{n+1},\ldots,v_{n+k})$, $y_{n+k+1}^\infty=x_0^\infty$.
Then $y\in W^{ws}(x)\cap C$, and there are integers $\ell,m>n$ s.t.
$\sigma^m(y)\in W^s_{\loc}(\sigma^\ell(x))\cap \sigma^m(C)$, whence $\sigma^m(C)\cap [x_\ell]\neq \emptyset$.
\item Necessarily
$
\nu^s_{\sigma^\ell(x)}(\sigma^m C)=\nu[(p^s_{\sigma^\ell(x)})^{-1}(\sigma^m C)]=\nu(\sigma^m(C)\cap [x_{\ell}])$.
Since $\nu$ is globally supported, $\nu^s_{\sigma^\ell(x)}(\sigma^m C)>0$.
\item Since $E$ is $\sigma$--invariant and $x\in E$, $\sigma^\ell(x)\in E$ and
$\nu^s_{\sigma^\ell(x)}(\sigma^m(C)\cap E)\neq 0$.
\item $\nu^s_{\sigma^\ell(x)}$ is supported on $W^s_{\loc}(\sigma^\ell(x))$, thus
$W^s_{\loc}(\sigma^\ell(x))\cap\sigma^m(C)\cap E\neq \emptyset$.
\item Therefore
$W^{ws}(x)\cap E\cap  C\supseteq \sigma^{-m}[W^s_{\loc}(\sigma^{\ell}(x))\cap\sigma^m(C)\cap E]\neq \emptyset.$
\end{enumerate}
We see that $W^{ws}(x)\cap E$ intersects every non-empty cylinder $C$ in $\Sigma$.

\medskip
\noindent
{\em Proof that $\wt{U}$ is uniformly continuous on $W^{ws}(x)\cap E$\/}:
Fix $y,z\in W^{ws}(x)\cap E$ s.t. $y\neq z$ and  $y_0=z_0$. We construct
 $y^1\in W^s_{\rm loc}(y)\cap E$ s.t.
\begin{enumerate}[(i)]
\item $z^1:=[y^1,z]\in W^{ws}(x)\cap E$,
\item $d(z,z^1)\leq d(y,z)$ and $d(z^1,y^1)\leq d(z,y)$,
\item $d(y,y^1)\leq 3d(y,z)$.
\end{enumerate}
\begin{figure}[hbt!]
\centering
\def\svgwidth{8cm}
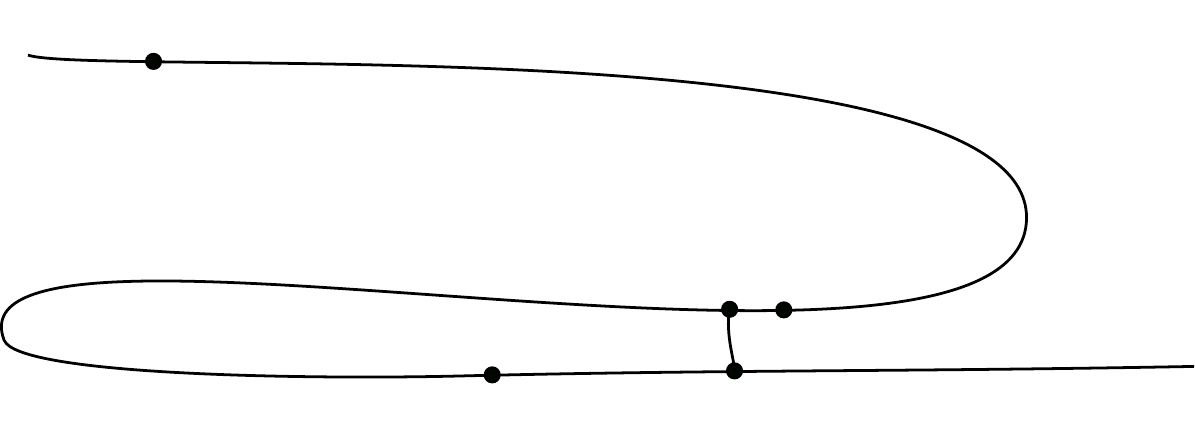
\label{figure coboundary}
\end{figure}
Here is how to do this. First, find $z^1\in W^s_{\loc}(z)\cap E$ arbitrarily close to $z$ s.t $y^1:=[z^1,y]\in W^s_{\loc}(y)\cap E$. Such points exist because $\nu^s_{z}(E^c)=0$, $\nu^s_{y}(E^c)=0$, $\nu^s_{z}$ has full support in $W^s_{\loc}(z)$,
and $\nu^s_{z}=\nu^s_{y}\circ\pi_{zy}$ for $\pi_{zy}(\cdot)=[\cdot,y]$.
Automatically $z^1=[y^1,z]$, and if $z^1$ is close enough to $z$, then $d(z^1,z)<d(z,y)$ and
$d(z^1,y)=d(z,y)$ (the first place where $z^1,y$ disagree is the first place where $z,y$ disagree).
Since $y^1=[z^1,y]$, $d(z^1,y^1)\leq d(z^1,y)=d(z,y)$, proving (ii).  Part (iii) follows from (ii) and the triangle inequality.

\medskip
Let $\gamma=\<y,z^1,y^1,y\>$. Using $y\in E$ and $G=\{0\}$, we have
\begin{align}\label{case 1 - sum of weights}
P^s(y,z^1)+P^u(z^1,y^1)+P^s(y^1,y)=0.
\end{align}
By Lemma \ref{Lemma-P}(3), $|\wt{U}(y)-\wt{U}(z^1)|=|P^s(y,z^1)|\leq |P^u(z^1,y^1)|+|P^s(y^1,y)|.$
Since $y^1\in W^u_{\loc}(z^1)$, $|P^u(z^1,y^1)|\leq C d(y,z)^\alpha$, where $C,\alpha$ are given by Lemma \ref{Lemma-P}(4). Similarly, $|P^s(y^1,y)|\leq 3^\alpha C d(y,z)^{\alpha}$. Thus
$|\wt{U}(y)-\wt{U}(z^1)|\leq 4C d(y,z)^{\alpha}$. Also by the cocycle equation,
$
|\wt{U}(z)-\wt{U}(z^1)|=|P^s(z,z^1)|\leq Cd(y,z)^\alpha
$.
It follows that $|\wt{U}(y)-\wt{U}(z)|<5C d(y,z)^{\alpha}$, proving that $\wt{U}$ is uniformly continuous on $W^{ws}(x)\cap E$.

\medskip
Therefore $\wt{U}$ extends continuously to a function $U:\Sigma\to\R$. Since $r=\wt{U}\circ\sigma-\wt{U}$ on
$W^{ws}(x)\cap E$, $r=U\circ\sigma-U$ on $\Sigma$. This cannot happen as it implies, by the Poincar\'e recurrence theorem,
that $\liminf r_n=\liminf [U\circ\sigma^n-U]<\infty$ a.e., whereas we know that $\inf r>0$, so $\liminf r_n=\infty$.
Thus $G\neq \{0\}$.

\medskip
\noindent
{\sc Claim 3:} {\em There exists $h:\Sigma\to S^1$ H\"older continuous  s.t. $\exp[\frac{2\pi i}{c}r]=h/h\circ\sigma$.}

\medskip
Let $\theta:=\frac{2\pi}{c}$, fix $x\in E$ and let $\wt{h}:W^{ws}(x)\cap E\to S^1$ by
 $\wt{h}(y):=\exp[-i\theta P^s(y,x)]$. By Lemma \ref{Lemma-P}(3), $\wt{h}/\wt{h}\circ\sigma=\exp[i\theta r]$ on
$W^{ws}(x)\cap E$.
The idea is to show that $\wt{h}$ is H\"older continuous on $W^{ws}(x)\cap E$ and then deduce as
in the previous proof that it extends H\"older continuously to a function $h:\Sigma\to S^1$.
The proof is the same as in the last step of Claim 2, except that one needs to replace (\ref{case 1 - sum of weights}) by
$$
\exp[i\theta (P^s(y,z^1)+P^u(z^1,y^1)+P^s(y^1,y))]=1.
$$
As before, this implies  that $\frac{\wt{h}(y)}{\wt{h}(z^1)}=e^{i\e_1}$ with $|\e_1|\leq 4C|\theta| d(y,z)^{\alpha}$,
and $\frac{\wt{h}(z^1)}{\wt{h}(z)}=e^{i\e_2}$ with $|\e_2|\leq C|\theta|d(y,z)^{\alpha}$. So $\frac{\wt{h}(y)}{\wt{h}(z)}=e^{i\e}$
with $|\e|\leq 5|\theta|d(y,z)^\alpha$, whence the H\"older continuity of $\wt{h}:W^{ws}(x)\to S^1$.

\medskip
Claim 3 completes the proof that if the Pinsker $\sigma$--algebra of $\sigma_r$ is not trivial then $r$ is arithmetic.
Equivalently, (1)$\Rightarrow$(4) in the statement of Theorem \ref{Thm-WM-K}, and this completes the proof of the theorem.
\end{proof}

\subsection*{The Pinsker factor in the arithmetic case} In the last section we saw that if the roof function is arithmetic,
then the Pinsker factor of every equilibrium measure of a bounded H\"older continuous potential is non-trivial.
In this section we show that in this case the Pinsker factor is isomorphic to  a rotational flow.
In fact we will show more, that the flow is isomorphic to the direct product of a Bernoulli flow and a rotational flow.

\begin{thm}\label{Thm-Non-non-arithmetic}
Let $\sigma_r:\Sigma_r\to\Sigma_r$ be a topologically transitive TMF s.t. $e^{i\theta r}=\frac{h}{h\circ\sigma}$ for some $\theta\neq 0$ and $h:\Sigma\to\R$ continuous.  There exists $p\in\N$ s.t. for every equilibrium measure $\mu$ of a bounded H\"older continuous potential with finite pressure, the following hold:
\begin{enumerate}[$(1)$]
\item $(\Sigma_r,\sigma_r,\mu)$ is isomorphic to a topologically transitive TMF with constant roof function
equal to $2\pi/\theta$.
\item $(\Sigma_r,\sigma_r,\mu)$ is isomorphic to the product of a Bernoulli flow  and a rotational flow with period ${2\pi p/\theta}$.
\item The Pinsker factor of $(\Sigma_r,\sigma_r,\mu)$ is isomorphic to a rotation with period ${2\pi p/\theta}$.
\end{enumerate}
\end{thm}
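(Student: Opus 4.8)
The plan is to convert $\sigma_r$, via a cohomology argument and a combinatorial recoding, into an honest \emph{constant}-roof TMF, and then to read off the rotational factor from the spectral decomposition of the new base, using Bernoullicity of equilibrium states on mixing Markov shifts for the rest.

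\emph{Step 1 (cohomology to a discrete-valued roof).} Since $\Sigma$ is zero-dimensional, the continuous function $h:\Sigma\to S^1$ lifts to a real function $\psi:\Sigma\to\R$ with $h=e^{i\psi}$, and $\psi$ may be taken H\"older by the argument used in Claim 3 of the proof of Theorem \ref{Thm-WM-K}. Then $\bar r:=r+\tfrac1\theta(\psi\circ\sigma-\psi)$ satisfies $e^{i\theta\bar r}\equiv 1$, so $\bar r$ is valued in $\tfrac{2\pi}{\theta}\Z$; being H\"older and discretely valued, it is locally constant. After a higher--block recoding of $\Sigma$ (independent of $\mu$) we may assume $\bar r$ depends only on the zeroth coordinate, and, as in the Claim inside the proof of Lemma \ref{LemmaOneSided} (inducing on a long cylinder), we may further arrange $\bar r\geq \tfrac{2\pi}{\theta}$, so that $\bar r$ takes values in $\tfrac{2\pi}{\theta}\N$. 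Since the transfer function $\tfrac1\theta\psi$ is bounded and H\"older, a change of Poincar\'e section as in Lemma \ref{LemmaOneSided} yields a measure--preserving flow isomorphism $(\Sigma_r,\sigma_r,\mu)\cong(\Sigma_{\bar r},\sigma_{\bar r},\bar\mu)$, where $\bar\mu$ is again an equilibrium measure of a bounded H\"older potential with finite pressure.

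\emph{Step 2 (stacking to a constant roof; proof of (1)).} Write $c:=\tfrac{2\pi}{\theta}$ and define a new TMS $\hat\sigma:\hat\Sigma\to\hat\Sigma$ with vertex set $\{(v,j):v\in V,\ 1\leq j\leq \bar r(v)/c\}$ and edges $(v,j)\to(v,j+1)$ for $j<\bar r(v)/c$, and $(v,\bar r(v)/c)\to(w,1)$ whenever $v\to w$. Splitting each $\bar r$--tower over $v$ into its $\bar r(v)/c$ floors of height $c$ gives a flow isomorphism from $(\Sigma_{\bar r},\sigma_{\bar r})$ onto the \emph{constant}-$c$ suspension $(\hat\Sigma_c,\hat\sigma_c)$ of $\hat\sigma$, carrying $\bar\mu$ to an equilibrium measure $\hat\mu$ of a bounded H\"older potential. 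Since $\Sigma$ is topologically transitive and not a single cycle, so is $\hat\sigma$. This proves (1).

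\emph{Step 3 (spectral decomposition; proof of (2) and (3)).} Let $\hat\Sigma=\biguplus_{i=0}^{p-1}\hat\Sigma_i$ be the spectral decomposition of $\hat\sigma$, so that $\hat\sigma$ cyclically permutes the $\hat\Sigma_i$ and $S:=\hat\sigma^{p}|_{\hat\Sigma_0}$ is topologically mixing; the integer $p$ depends only on $\hat\sigma$, hence only on $r$ and $\theta$, and not on $\mu$. Concatenating the $p$ consecutive towers $\hat\Sigma_0\times[0,c),\dots,\hat\Sigma_{p-1}\times[0,c)$ identifies $(\hat\Sigma_c,\hat\sigma_c,\hat\mu)$ with the \emph{constant}-$pc$ suspension of $(S,m)$, where $m$ is the normalized restriction to $\hat\Sigma_0$ of the induced measure of $\hat\mu$; by the argument of Claim 1 in the proof of Theorem \ref{Thm-RPF}, $m$ is an equilibrium measure of a bounded H\"older potential on the topologically mixing TMS $S$. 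By the Bernoullicity of equilibrium states of H\"older potentials on topologically mixing countable Markov shifts (see \cite{Sarig-Bernoulli-JMD}), $(\hat\Sigma_0,m,S)$ is a Bernoulli automorphism. Finally, the constant-$pc$ suspension of a Bernoulli automorphism is isomorphic to the product of a Bernoulli flow and the rotational flow of period $pc=2\pi p/\theta$ (the non--mixing case of the Anosov picture; see \cite{Ratner-Flows-Bernoulli} and \cite[Prop.~4.4]{Thouvenot-Handbook}), which is (2). For (3): Bernoulli flows have trivial Pinsker factor, rotational flows have zero entropy, and the Pinsker $\sigma$--algebra of a product is the product of the Pinsker $\sigma$--algebras, so the Pinsker factor of $(\Sigma_r,\sigma_r,\mu)$ is exactly the rotational flow of period $2\pi p/\theta$.

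\emph{Main obstacle.} The heart of the argument is Step 1: passing from the bare coboundary equation $e^{i\theta r}=h/h\circ\sigma$ to an honest constant--roof TMF model. This requires lifting $h$ to a \emph{H\"older} real--valued function and then correcting $r$ and the transfer function, as in Lemma \ref{LemmaOneSided}, so that the new roof $\bar r$ is simultaneously locally constant, bounded, and bounded below by $2\pi/\theta$, all while keeping each recoding independent of $\mu$ so that the period $p$ is genuinely the same for every equilibrium measure. A secondary point needing care is the input that equilibrium states on mixing countable Markov shifts are Bernoulli, together with the classical fact that a constant--roof suspension of a Bernoulli automorphism splits as a Bernoulli flow times a rotation.
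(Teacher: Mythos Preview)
Your proposal is correct and follows essentially the same route as the paper: reduce to a constant-roof TMF, take the spectral decomposition of the base to get a mixing TMS with roof $2\pi p/\theta$, invoke Bernoullicity of equilibrium states on mixing countable Markov shifts, and then identify the constant suspension with a Bernoulli flow times a rotation (the paper does this via Lemma~\ref{lemma-constant-versus-product}, you via the same classical fact). The only difference is that the paper outsources Part~(1) entirely to \cite[Theorem 7.2]{Lima-Sarig}, whereas you reconstruct that step explicitly via the lift $\psi$, higher-block recoding, and stacking; your identification of the $\mu$-independence of the inducing and recoding as the delicate point is apt, and is exactly what \cite[Theorem 7.2]{Lima-Sarig} packages.
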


Before the proof of the theorem, let us prove that constant suspensions over Bernoulli automorphisms are the same as
the product of a Bernoulli flow and a rotational flow.

\begin{lem}\label{lemma-constant-versus-product}
Let $\mathsf{T}=(X,\mu,\{T^t\})$ be a measurable flow. The following are equivalent:
\begin{enumerate}[$(1)$]
\item $\mathsf{T}$ is isomorphic to a constant suspension over a Bernoulli automorphism.
\item $\mathsf{T}$ is isomorphic to the product of a Bernoulli flow and a rotational flow.
\end{enumerate}
\end{lem}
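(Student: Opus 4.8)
\emph{Proof plan.} The only non-elementary ingredient is Ornstein's theorem on imbedding Bernoulli shifts in flows \cite{Ornstein-Imbedding-in-Flows}; granting it, the lemma reduces to a single explicit ``shearing'' isomorphism, which I now describe in general. Let $\mathsf B=(Y,\nu,\{S^t\})$ be any measurable flow and let $c>0$. Form the constant--$c$ suspension over the time--$c$ map $S^c$: the flow $\{U^t\}$ on $Y\times[0,c)$, with measure $\tfrac1c\,\nu\times\mathrm{Leb}$ and identification $(y,c)\sim(S^c y,0)$, where $U^t(y,s)=(S^{nc}y,\,s+t-nc)$ with $n=\lfloor(s+t)/c\rfloor$. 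I claim that
$$
\Phi(y,s):=\bigl(S^{s}y,\ s/c \bmod 1\bigr),\qquad \Phi\colon Y\times[0,c)\longrightarrow Y\times(\R/\Z),
$$
is a measure-preserving bimeasurable bijection which intertwines $\{U^t\}$ with the product flow $V^t(w,x):=(S^{t}w,\ x+t/c\bmod 1)$ on $Y\times(\R/\Z)$ equipped with $\nu\times\mathrm{Haar}$. Both well-definedness across the identification and the relation $\Phi\circ U^t=V^t\circ\Phi$ follow from the flow axioms $S^0=\mathrm{Id}$ and $S^a\circ S^b=S^{a+b}$, and the measure claim is immediate since $y\mapsto S^sy$ is $\nu$-preserving for each fixed $s$ while $s\mapsto s/c$ pushes $\tfrac1c\mathrm{Leb}_{[0,c)}$ forward to Haar measure. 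In words: a constant--$c$ suspension over $S^c$ is literally the product of $\mathsf B$ with the rotational flow of period $c$.

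The two implications then follow quickly. For $(2)\Rightarrow(1)$: if $\mathsf T\cong\mathsf B\times\mathsf R$ with $\mathsf B=(Y,\nu,\{S^t\})$ a Bernoulli flow and $\mathsf R$ rotational of period $c$, then $S^c$ is a Bernoulli automorphism (every non-zero time map of a Bernoulli flow is Bernoulli \cite{Ornstein-Imbedding-in-Flows}), so by the shearing isomorphism $\mathsf T$ is isomorphic to the constant--$c$ suspension over the Bernoulli automorphism $S^c$. For $(1)\Rightarrow(2)$: if $\mathsf T$ is isomorphic to the constant--$c$ suspension over a Bernoulli automorphism $T$, use Ornstein's imbedding theorem to obtain a Bernoulli flow $(Y,\nu,\{S^t\})$ with $S^1\cong T$, set $\tilde S^t:=S^{t/c}$ (still a Bernoulli flow, since for $t\neq0$ the map $\tilde S^t=S^{t/c}$ is a non-zero time map of $\{S^t\}$), and observe $\tilde S^c=S^1\cong T$; then $\mathsf T$ is isomorphic to the constant--$c$ suspension over $\tilde S^c$, which by the shearing isomorphism is $\{\tilde S^t\}\times\mathsf R$, a product of a Bernoulli flow with the period--$c$ rotational flow.

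It remains only to record the degenerate cases so the statement reads correctly everywhere. If the Bernoulli automorphism in (1) has zero entropy it is trivial (a one-point space), the suspension is then just the rotational flow of period $c$, and this is the product of the trivial (zero-entropy) Bernoulli flow with $\mathsf R$; conversely a zero-entropy Bernoulli flow is trivial, so both sides of the equivalence stay consistent. Ornstein's imbedding theorem is valid for all entropy values, including $h=\infty$, so no restriction is needed there. Thus the only real content of the lemma is the imbedding theorem; the rest is the bookkeeping around $\Phi$, and I foresee no genuine obstacle.
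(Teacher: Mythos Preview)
Your proof is correct and follows essentially the same approach as the paper: the shearing map $\Phi(y,s)=(S^s y,\,s/c\bmod 1)$ is exactly the paper's conjugacy $\rho(x,s)=(S^s(x),\,s\bmod 1)$ (they normalize to $c=1$), and both directions rely on the same two facts---Ornstein's imbedding theorem for $(1)\Rightarrow(2)$ and that non-zero time maps of Bernoulli flows are Bernoulli for $(2)\Rightarrow(1)$. Your presentation is slightly cleaner in that you isolate the shearing isomorphism as a general statement first and handle arbitrary period $c$ directly rather than by rescaling.
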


\begin{proof}
(1) $\Rightarrow$ (2). Assume that the roof function is $\equiv 1$. Then we can write $\mathsf{T}=(\Sigma_1,\mu,\{T^t\})$,
$T^t(x,s)=(S^{\lfloor t+s\rfloor}(x),t+s-\lfloor t+s\rfloor)$, where:
\begin{enumerate}[$\circ$]
\item $(\Sigma,\nu,S)$ is a Bernoulli automorphism.
\item $\Sigma_1$ is the suspension space over $\Sigma$ with roof function $\equiv 1$.
\item $\mu=\int_\Sigma\int_0^1 \delta_t dt d\nu(x)$.
\end{enumerate}
By Ornstein Theory, $(\Sigma,\nu,S)$ embeds into a Bernoulli flow $(\Sigma,\nu,\{S^t\})$,
see \cite{Ornstein-Imbedding-in-Flows}. Let $\{R^t\}$ be the rotational flow with period 1. We claim that $\mathsf{T}$
is isomorphic to $(\Sigma\times\mathbb T,\nu\times dt,\{S^t\times R^t\})$, the product of a Bernoulli flow and a rotational
flow. The conjugacy is the bijection $\rho:\Sigma_1\to\Sigma\times\mathbb T$, $\rho(x,s)=(S^s(x),s\text{ (mod 1)})$. First note that
$\rho$ is well-defined since $\rho(x,1)=(S^1(x),0)=(Sx,0)=\rho(Sx,0)$. Also:
\begin{align*}
&(\rho\circ T^t)(x,s)=\rho(S^{\lfloor t+s\rfloor}(x),t+s-\lfloor t+s\rfloor)=(S^{t+s}(x),t+s\text{ (mod 1)})\\
&=(S^t\times R^t)(S^s(x),s\text{ (mod 1)})=[(S^t\times R^t)\circ\rho](x,s).
\end{align*}
For all measurable $A\subset\Sigma$ and interval $I\subset\mathbb T$ not containing zero,
$(\mu\circ\rho^{-1})(A\times I)=\mu(A\times I)=\nu(A)\cdot|I|$, hence $\mu\circ\rho^{-1}=\nu\times dt$, which completes
the proof that $\rho$ is a conjugacy between $\mathsf{T}$ and $\{S^t\times R^t\}$.

\medskip
\noindent
(2) $\Rightarrow$ (1). With the same notation as above, assume that $\mathsf{T}=(\Sigma\times\mathbb T,\nu\times dt,\{S^t\times R^t\})$.
Then $\mathsf{T}$ is isomorphic to the suspension flow $(\Sigma_1,\mu,\{T^t\})$, where the basis dynamics is the Bernoulli
automorphism $(\Sigma,\nu,S^1)$. The conjugacy is the same $\rho$ as above, and the proof is analogous to (1) $\Rightarrow$ (2).
\end{proof}

\begin{proof}[Proof of Theorem \ref{Thm-Non-non-arithmetic}] Part (1) is the content of
\cite[Theorem 7.2]{Lima-Sarig}. Denote this TMF by
$\sigma_{\wt{r}}:\wt{\Sigma}_{\wt{r}}\to \wt{\Sigma}_{\wt{r}}$, with $\wt{r}\equiv 2\pi/\theta$.

Let $p$ denote the period of $\wt{\Sigma}$.
Recall from page \pageref{TMF-mixing-basis} that, using the spectral decomposition of
$\wt{\Sigma}$ \cite{Kitchens-Book}, $\sigma_{\wt{r}}:\wt{\Sigma}_{\wt{r}}\to \wt{\Sigma}_{\wt{r}}$
is topologically conjugate to a TMF $\sigma_{\wh{r}}:\wh{\Sigma}_{\wh{r}}\to \wh{\Sigma}_{\wh{r}}$ where
$\sigma:\wh{\Sigma}\to\wh{\Sigma}$ is topologically mixing, and $\wh{r}=\wt{r}_p=2\pi p/\theta=:\alpha$.

Let $\wh{\mu}$ be the measure on $\wh{\Sigma}_{\wh{r}}$ corresponding to $\mu$, and let
$\wh{\nu}$ be the induced measure of $\wh{\mu}$.
$\wh{\nu}$ is an equilibrium measure of a bounded H\"older continuous potential on $\wh{\Sigma}$
with finite pressure. Since $\sigma:\wh{\Sigma}\to \wh{\Sigma}$ is topologically mixing and $\wh{\Sigma}$ is
not a singleton, $\sigma:\wh{\Sigma}\to \wh{\Sigma}$ is Bernoulli \cite{Bowen-Bernoulli,Sarig-Bernoulli-JMD}.
By Lemma \ref{lemma-constant-versus-product}, $\sigma_{\wh{r}}:\wh{\Sigma}_{\wh{r}}\to \wh{\Sigma}_{\wh{r}}$
is isomorphic to the product of a Bernoulli flow and a rotational flow with period $\alpha$.

Since the Pinsker factor of a direct product is isomorphic to the direct product of the Pinsker
factors \cite[Prop. 4.4]{Thouvenot-Handbook}, and since Bernoulli flows have trivial Pinsker factor,
it follows that the Pinsker factor of $(\Sigma_r,\sigma_r,\mu)$ is isomorphic to $\mathfs P(R^t)=\mathfs P(R^1)=R^1$,
a rotation with period ${2\pi p}/\theta$.
\end{proof}

\section{The Bernoulli property}
We have proved so far that if $\sigma_r:\Sigma_r\to\Sigma_r$ is a topologically transitive TMF and $\mu$ is an equilibrium measure of a bounded H\"older continuous potential with finite pressure, then $(\Sigma_r,\sigma_r,\mu)$ is isomorphic to a Bernoulli flow times a rotational flow when $r$ is arithmetic, and $(\Sigma_r,\sigma_r,\mu)$ is a K flow when $r$ is
not arithmetic. The purpose of this section is to complete the picture and prove the following result.

\begin{thm}\label{Thm-Bernoulli}
Let $\sigma_r:\Sigma_r\to\Sigma_r$ be a topologically transitive TMF. If $r$ is not arithmetic,
then for every equilibrium measure $\mu$ of a bounded H\"older continuous function with finite pressure
$(\Sigma_r,\sigma_r,\mu)$ is a Bernoulli flow.
\end{thm}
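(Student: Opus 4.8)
The plan is to show that the time--one map $\sigma_r^1$ is a Bernoulli automorphism via the very weak Bernoulli (VWB) criterion, in the spirit of \cite{Ornstein-Weiss-Geodesic-Flows,Ratner-Flows-Bernoulli}. After applying Lemma \ref{LemmaOneSided} we may assume $r$ is independent of the past; non--arithmeticity persists, since this is a flow isomorphism and, by Theorem \ref{Thm-WM-K}, non--arithmeticity of the roof is equivalent to mixing of $\mu$, which is an isomorphism invariant. By Theorem \ref{Thm-WM-K} the flow has the K property, so $\sigma_r^1$ is a K automorphism, in particular K--mixing (Theorem \ref{Thm-K-Mixing}). It then suffices, by Ornstein's theory, to produce an increasing sequence of finite partitions $\beta_1\le\beta_2\le\cdots$ of $\Sigma_r$ that generates under $\sigma_r^1$ and such that each $\beta_k$ is VWB for $\sigma_r^1$; this forces $\sigma_r^1$ to be Bernoulli \cite{Ornstein-Weiss-General-Bernoulli,Ornstein-B-Flows}, whence $\{\sigma_r^t\}$ is a Bernoulli flow.

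The partitions are built from the local product structure of the induced measure $\nu$ (Theorem \ref{Prop-Prod-Strctr-measure}). Fix a large finite set of states; inside the corresponding cylinders choose finitely many ``rectangles'' $R=[A,B]$ with $A\subset W^s_{\loc}$, $B\subset W^u_{\loc}$, $\diam R<2^{-k}$ and $\nu(\partial R)=0$, and put everything else (high states, long thin cylinders) into a single ``garbage'' set of small $\nu$--measure; then refine the resulting partition of $\Sigma$ by a partition of the fibre coordinate into intervals of length $<2^{-k}$, compatibly with the suspension identification, to obtain $\beta_k$. These separate points modulo $\mu$--null sets, so the $\beta_k$ generate under $\sigma_r^1$. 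The point of the construction is that, by the Claim in the proof of Theorem \ref{Prop-Prod-Strctr-measure}, the conditional measure of $\mu$ on a box $R\times I$ is, up to a Radon--Nikodym density bounded between $C^{-1}$ and $C$ with $C$ depending only on the finitely many transitions $v\to w$ occurring in $\beta_k$, the product of a stable marginal, an unstable marginal and Lebesgue on $I$. This uniform approximate product structure, for a fixed $\beta_k$, is what replaces the finite--alphabet estimates of \cite{Ratner-Flows-Bernoulli}.

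The core is the VWB estimate for a fixed $\beta=\beta_k$: given $\epsilon>0$ there is $N_0$ so that for all $n$ and all $N'>N\ge N_0$, for all but an $\epsilon$--fraction of atoms $\mathcal A$ of $\bigvee_{j=-N'}^{-N}(\sigma_r^1)^j\beta$, the conditional distribution of the $\bigvee_{j=0}^{n-1}(\sigma_r^1)^j\beta$--name given $\mathcal A$ is within $\bar d$--distance $\epsilon$ of its unconditional distribution. I would establish it by constructing a measure preserving, $\bar d$--small joining of the two laws in three stages. (i) \emph{Decoupling.} By K--mixing (Theorem \ref{Thm-K-Mixing}) the conditional law given the full past $\mathcal A$ differs from the law given only the behaviour of $\mathcal A$ near time $-N$ at a $\bar d$--cost that tends to $0$ as $N\to\infty$. (ii) \emph{Matching the base.} Two such recent histories are two recent rectangles differing essentially only in their $W^{wu}$--coordinate; transport the conditional future measure from one to the other by the stable holonomy (Smale bracket) $x\mapsto[x,y]$. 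This is measure preserving by Corollary \ref{Cor-Prod-Structure} together with $\nu^s_x\circ\pi_{xy}^{-1}=\nu^s_y$, and since it alters only coordinates to the left of the current time it affects the $\beta$--name on $[0,n-1]$ in boundedly many places. (iii) \emph{Synchronising the fibre.} The stable holonomy shifts the fibre time by the Bowen--Marcus cocycle $P^s(x,y)$ (Lemma \ref{Lemma-P}(1)), which would corrupt the fibre--interval coordinate of $\beta$; correct this by composing with an $su$--loop holonomy at $y$ whose weight $P(\gamma)=\sum_i P^{\tau_i}(x^{i-1},x^i)$ cancels $P^s(x,y)$ to within $\epsilon$, with base displacement $O(2^{-N})$ on the relevant portion. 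Such a loop exists \emph{because $r$ is not arithmetic}: the closure of the group of $su$--loop weights (the group $G$ appearing in the proof of Theorem \ref{Thm-WM-K}) is then $\R$ rather than a discrete $c\Z$, so these weights are dense in $\R$. Composing (i)--(iii) yields a measure preserving map between the two laws that moves the name by less than an $\epsilon$--fraction of coordinates, which is the desired bound; letting $\mathcal A$ vary gives that $\beta_k$ is VWB.

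The main obstacle is stage (iii): converting the mere density of $su$--loop weights into an honest, measure preserving synchronisation of the fibre coordinate with controlled base displacement, carried out uniformly in $n$ and simultaneously with the base matching of (ii). This is exactly the step that breaks in the arithmetic case, where the weights form $c\Z$ and one is left with the rotational factor of Theorem \ref{Thm-Non-non-arithmetic}, and it is the symbolic analogue of the horocycle--matching estimates of \cite{Ornstein-Weiss-Geodesic-Flows,Ratner-Flows-Bernoulli}. Stages (i)--(ii) are also delicate, chiefly because the countable alphabet forces one to carry the garbage state and to work with the state--dependent constants $C_{vw}$, which become uniform only after the partition $\beta_k$ (equivalently, a finite set of states) has been fixed.
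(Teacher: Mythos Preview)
Your overall architecture is right --- reduce to $r$ independent of the past, use non-arithmeticity via Theorem~\ref{Thm-WM-K} to get the K property and hence K-mixing, exhibit a generating increasing sequence of finite partitions, and prove each is VWB by an Ornstein--Weiss/Ratner matching argument --- and this is exactly the paper's strategy. But your implementation of the matching diverges from the paper in one crucial respect, and in doing so you manufacture an obstacle that does not actually exist.

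The point you miss is that once $r$ is independent of the past, the Bowen--Marcus stable cocycle vanishes on local stable manifolds: if $y_0^\infty=x_0^\infty$ then $r(\sigma^k y)=r(\sigma^k x)$ for all $k\ge 0$, whence $P^s(x,y)=0$. Consequently a matching map that alters only the past coordinates and keeps the fibre coordinate, $\Xi(x,t)=(y,t)$ with $y_{-n}^\infty=x_{-n}^\infty$, preserves the fibre \emph{exactly} --- no synchronisation is needed, and your stage~(iii) disappears entirely. The paper builds $\Xi$ this way (Lemma~\ref{Lemma-gamma-VWB}, Steps~3--5), using the conditional measures $\nu(\cdot\,|\,x_{-n}^\infty)$ and the H\"older regularity of the $g$--function from~(\ref{g-function}) rather than the Smale-bracket/rectangle picture you propose; the partitions are simply cylinders ${_{-n}[}a_{-n},\ldots,a_n]$ crossed with $\delta$--intervals in the fibre (``$(n,\delta)$--cubes''), not product rectangles. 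Because $\Xi$ fixes $x_{-n}^\infty$ and $t$, and $r$ depends only on $x_0^\infty$, the forward $\gamma$--itineraries of $(x,t)$ and $\Xi(x,t)$ can differ only when the orbit passes within $\delta$ of a cube boundary in the fibre direction (Step~6), and the VWB estimate then follows from K-mixing alone (Step~1).

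So in the paper's proof, non-arithmeticity enters \emph{only} through Theorem~\ref{Thm-WM-K} to supply K-mixing; the density of $su$--loop weights is never invoked again. Your stage~(iii), which you correctly flag as the hard part of your plan, is therefore a self-inflicted difficulty: it is closer in spirit to the original geometric argument of \cite{Ornstein-Weiss-Geodesic-Flows}, and might be completable, but it is bypassed entirely by exploiting the one-sided reduction of Lemma~\ref{LemmaOneSided}.
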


\noindent
The theorem above strengthens Theorem \ref{Thm-WM-K} by saying that for equilibrium measures of bounded H\"older potentials with finite pressure, weak mixing is equivalent to  the Bernoulli property.

\subsection*{Review of general theory}

Let $(X,\mathfs B,\mu)$ be a non-atomic Lebesgue probability space, and let
$\alpha=\<A_1,\ldots,A_N\>$ and $\beta=\<B_1,\ldots,B_N\>$ be ordered partitions
of $(X,\mathfs B,\mu)$. Given $x\in X$, define $\alpha(x):=i$ if $x\in A_i$.

\medskip
\noindent
{\sc Partition distance:}
$d(\alpha,\beta):=\sum_{i=1}^N\mu(A_i\triangle B_i)=2\displaystyle\int 1_{[\alpha(x)\not=\beta(x)]}d\mu(x)$.

\medskip
Let $\{\alpha_i\}_1^n$ be a finite sequence of ordered partitions of $(X,\mathfs B,\mu)$,
and let $\{\beta_i\}_1^n$ be a finite sequence of ordered partitions of another
non-atomic Lebesgue probability space $(Y,\mathfs C,\nu)$.
Suppose that each partition has $N$ elements, say $\alpha_i=\<A_1^i,\ldots,A_N^i\>$
and $\beta_i=\<B_1^i,\ldots,B_N^i\>$.

\medskip
\noindent
{\sc Same distribution:} We say that $\{\alpha_i\}_1^n, \{\beta_i\}_1^n$ {\em have the same distribution},
and write $\{\alpha_i\}_1^n\sim \{\beta_i\}_1^n$, if
$$\mu[A_{i_1}^1\cap \cdots\cap A_{i_n}^{n}]=\nu[B_{i_1}^1\cap \cdots\cap B_{i_n}^{n}],\ \
\forall(i_1,\ldots,i_n)\in\{1,\ldots,N\}^n.$$

\medskip
This is equivalent to the existence of a measure preserving map $\theta:(X,\mathfs B,\mu)\to(Y,\mathfs C,\nu)$
such that $\theta[A_{i_1}^1\cap \cdots\cap A_{i_n}^{n}]=B_{i_1}^1\cap \cdots\cap B_{i_n}^{n}$
modulo $\nu$, $\forall(i_1,\ldots,i_n)\in\{1,\ldots,N\}^n$. This notion can be weakened in the following way.

\medskip
\noindent
{\sc $d$--bar distance:} The {\em $d$--bar distance} between $\{\alpha_i\}_1^n, \{\beta_i\}_1^n$
is
$$
\ov{d}(\{\alpha_i\}_{1}^n, \{\beta_i\}_{1}^n):=\inf\left\{\frac{1}{n}\sum_{i=1}^n d(\ov{\alpha}_i,\beta_i):\begin{array}{l}
\{\ov{\alpha}_i\}_1^n\text{ are ordered partitions of }\\
(Y,\mathfs C,\nu)\text{ s.t. }\{\ov{\alpha}_i\}_1^n\sim\{{\alpha}_i\}_1^n
\end{array} \right\}.
$$

\medskip
To understand how the $d$--bar distance weakens the notion of same distribution,
we first weaken the notion of measure preserving maps.

\medskip
\noindent
{\sc $\epsilon$--measure preserving map:} An invertible measurable map $\theta:(X,\mathfs B,\mu)\to (Y,\mathfs C,\nu)$
is called {\em $\epsilon$--measure preserving} if $\exists E\in\mathfs B$, $\mu(E)<\epsilon$,
s.t. $\left|\frac{\nu(\theta(A))}{\mu(A)}-1\right|\leq \epsilon$ for all $A\subset X\setminus E$ measurable.

%


\begin{lem}[\cite{Ornstein-Weiss-Geodesic-Flows}]\label{OW-Lemma}
If $\theta:(X,\mathfs B,\mu)\to (Y,\mathfs C,\nu)$ is $\epsilon$--measure preserving s.t.
$$
\frac{1}{n}\sum_{i=1}^n 1_{[\alpha_i(x)\neq \beta_i(\theta(x))]}\leq \epsilon
$$
on a set of measure $\geq 1-\epsilon$, then $\ov{d}(\{\alpha_i\}_1^n, \{\beta_i\}_1^n)\leq 16\epsilon$.
\end{lem}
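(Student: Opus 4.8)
The plan is to use $\theta$ to transport the partitions $\alpha_i$ from $X$ to $Y$, and then repair the two defects this creates: $\theta$ is only $\epsilon$--measure preserving rather than measure preserving, so the transported partitions need not have the same distribution as $\{\alpha_i\}_1^n$; and the transported partitions agree with the $\beta_i$ only on a large set rather than everywhere. Both defects are confined to sets of measure $O(\epsilon)$, so after correcting them one lands within $O(\epsilon)$ of $\{\beta_i\}_1^n$ in $\ov d$.

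Concretely, I would first set $G:=\{x\in X\setminus E:\tfrac1n\sum_{i=1}^n 1_{[\alpha_i(x)\neq\beta_i(\theta x)]}\leq\epsilon\}$, so that $\mu(G)\geq 1-2\epsilon$ by hypothesis. Let $\mathcal A:=\bigvee_{i=1}^n\alpha_i$, with atoms $A_{\mathbf j}$ indexed by $\mathbf j=(j_1,\dots,j_n)\in\{1,\dots,N\}^n$. The sets $\theta(A_{\mathbf j}\cap G)\subset Y$ are pairwise disjoint (since $\theta$ is invertible modulo null sets), and the $\epsilon$--measure preserving property gives $|\nu(\theta(A_{\mathbf j}\cap G))-\mu(A_{\mathbf j}\cap G)|\leq\epsilon\,\mu(A_{\mathbf j}\cap G)$; summing over $\mathbf j$ and using $\mu(X\setminus G)\leq 2\epsilon$ yields $\sum_{\mathbf j}|\nu(\theta(A_{\mathbf j}\cap G))-\mu(A_{\mathbf j})|\leq 3\epsilon$ and $\nu\bigl(Y\setminus\bigcup_{\mathbf j}\theta(A_{\mathbf j}\cap G)\bigr)\leq 3\epsilon$.

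To build ordered partitions $\ov\alpha_i$ of $Y$ with $\{\ov\alpha_i\}_1^n\sim\{\alpha_i\}_1^n$ I would use a \emph{shrink and refill} procedure, which is the one genuinely delicate point. Choose measurable $B_{\mathbf j}\subset\theta(A_{\mathbf j}\cap G)$ with $\nu(B_{\mathbf j})=\min\bigl(\mu(A_{\mathbf j}),\,\nu(\theta(A_{\mathbf j}\cap G))\bigr)$ (this uses that $\nu$ is non-atomic). The leftover $Y\setminus\bigcup_{\mathbf j}B_{\mathbf j}$ has measure $\sum_{\mathbf j}\bigl(\mu(A_{\mathbf j})-\nu(B_{\mathbf j})\bigr)=\sum_{\mathbf j}\bigl(\mu(A_{\mathbf j})-\nu(\theta(A_{\mathbf j}\cap G))\bigr)^+\leq 3\epsilon$, i.e.\ exactly the total deficit, so again by non-atomicity it can be partitioned into sets $C_{\mathbf j}$ with $\nu(C_{\mathbf j})=\mu(A_{\mathbf j})-\nu(B_{\mathbf j})$. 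Put $\ov A_{\mathbf j}:=B_{\mathbf j}\cup C_{\mathbf j}$; then $\{\ov A_{\mathbf j}\}$ is a partition of $Y$ with $\nu(\ov A_{\mathbf j})=\mu(A_{\mathbf j})$, and declaring $\ov\alpha_i(y):=j_i$ whenever $y\in\ov A_{\mathbf j}$ makes $\bigvee_i\ov\alpha_i$ have the same joint distribution as $\bigvee_i\alpha_i$, hence $\{\ov\alpha_i\}_1^n\sim\{\alpha_i\}_1^n$. The overshoot of $\theta$ on some atoms is exactly why the shrinking step is forced; the refill step then closes the books using only a set of $\nu$--measure $\leq 3\epsilon$.

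Finally I would estimate, using $d(\ov\alpha_i,\beta_i)=2\int_Y 1_{[\ov\alpha_i\neq\beta_i]}\,d\nu$,
\[
\tfrac1n\sum_{i=1}^n d(\ov\alpha_i,\beta_i)=2\int_Y \tfrac1n\sum_{i=1}^n 1_{[\ov\alpha_i(y)\neq\beta_i(y)]}\,d\nu(y),
\]
and split $Y$ into $\bigcup_{\mathbf j}B_{\mathbf j}$ and its complement. On $B_{\mathbf j}$ we have $y=\theta x$ with $x\in A_{\mathbf j}\cap G$, so $\ov\alpha_i(y)=j_i=\alpha_i(x)$, and since $x\in G$ the integrand equals $\tfrac1n\sum_i 1_{[\alpha_i(x)\neq\beta_i(\theta x)]}\leq\epsilon$; on the complement the integrand is $\leq 1$ and the set has measure $\leq 3\epsilon$. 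Hence the integral is $\leq\epsilon+3\epsilon=4\epsilon$, and $\ov d(\{\alpha_i\}_1^n,\{\beta_i\}_1^n)\leq\tfrac1n\sum_i d(\ov\alpha_i,\beta_i)\leq 8\epsilon\leq 16\epsilon$, the crude constant $16$ in the statement leaving ample room for the routine technicalities about null atoms and measurability of $\theta$--images. The main obstacle, as indicated, is arranging the correction of the joint distribution so that it stays supported on an $O(\epsilon)$ set while remaining compatible with the transport by $\theta$ on the good part.
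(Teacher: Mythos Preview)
The paper does not prove this lemma at all: it is stated with a citation to \cite{Ornstein-Weiss-Geodesic-Flows} and used as a black box, so there is no ``paper's own proof'' to compare against. Your argument is a correct and complete version of the standard Ornstein--Weiss proof, and in fact yields the sharper bound $8\epsilon$; the only ingredients you use beyond the hypotheses are the invertibility of $\theta$ (explicitly part of the paper's definition of $\epsilon$--measure preserving) and the non-atomicity of $(Y,\mathfs C,\nu)$ (also assumed throughout), both needed for the shrink-and-refill step.
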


\medskip
In other words, $\{\alpha_i\}_1^n, \{\beta_i\}_1^n$ are close in $d$--bar distance if there exists an $\epsilon$--measure
preserving map $\theta$ that matches $\alpha_i(x)$ and $\beta_i(\theta(x))$ on the average, for most points.
That is why the $d$--bar distance weakens the notion of same distribution.

We now explain the property we will use to prove an automorphism is Bernoulli.
Let $(X,\mathfs B,\mu,T)$ be an automorphism.
Given $A\in\mathfs B$ with $\mu(A)>0$, let $(A,\mathfs B_A, \mu_A)$ be the induced non-atomic Lebesgue
probability space, i.e. $\mathfs B_A:=\{B\cap A:B\in\mathfs B\}$ and $\mu_A(\cdot)=\mu(\cdot|A)$.
Every partition $\alpha$ of $(X,\mathfs B,\mu)$ defines a {\em conditional partition} $\alpha|A=\{C\cap A:C\in\alpha\}$
of $(A,\mathfs B_A, \mu_A)$.
Write {\em ``$\epsilon$--a.e. $A\in\alpha$"} when
refering to a property that holds for a collection of atoms of $\alpha$ whose union has measure $\geq 1-\epsilon$.

\medskip
\noindent
{\sc Very weak Bernoulli property\footnote{This is the formulation in \cite{Ornstein-Weiss-Geodesic-Flows}
and it implies the definition in \cite{Ornstein-Book}. The two definitions are equivalent for Bernoulli automorphisms,
since in this case every partition is VWB.}:}
$\alpha$ is called {\em very weak Bernoulli (VWB)}
if for every $\epsilon>0$ there is $N_0=N_0(\epsilon)$ s.t. for all $n\geq 0$ and $N'\geq N\geq N_0$ it holds
$$
\ov{d}\left(\{T^{-i}\alpha\}_1^n, \{T^{-i}\alpha|A\}_1^n \right)<\epsilon\
\textrm{ for $\epsilon$--a.e. }A\in\bigvee_{k=N}^{N'} T^{k}\alpha.
$$

\medskip
$\vee$ denotes the joining of partitions. Taking $A\in\bigvee_{k=N}^{N'} T^{k}\alpha$ means
that we are fixing the far past of $T$.


\begin{thm}[\cite{Ornstein-Imbedding-in-Flows,Ornstein-Isomorphism-Thm,Ornstein-Weiss-Geodesic-Flows}]
Let $\textsf{T}=(X,\mathfs B,\mu,\{T^t\})$ be a probability preserving measurable flow.
If for some $t$, $(X,\mathfs B,\mu,T^t)$ has an increasing sequence of VWB partitions which generate $\mathfs B$,
then $\textsf{T}$  is a Bernoulli flow.
\end{thm}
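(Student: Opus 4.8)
The plan is to reduce the assertion to two results of Ornstein — his VWB criterion for automorphisms and his theorem that the time-$s$ maps of a Bernoulli flow are Bernoulli (the latter already recalled in the introduction) — so that only a short reduction remains. We read the hypothesis with $(X,\mathfs B,\mu)$ non-atomic and $t\neq0$; otherwise VWB partitions are undefined or (for $T^0=\mathrm{Id}$) only trivial ones exist, so they cannot generate $\mathfs B$.

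First I would show that $(X,\mathfs B,\mu,T^t)$ is a Bernoulli automorphism. Set $S:=T^t$ and let $\{\alpha_k\}_{k\ge1}$ be the given sequence of VWB partitions, with $\alpha_{k+1}$ refining $\alpha_k$ and $\bigvee_k\bigvee_{n\in\Z}S^n\alpha_k=\mathfs B$ modulo $\mu$. For each $k$ put $\mathfs B_k:=\bigvee_{n\in\Z}S^n\alpha_k$, an $S$-invariant sub-$\sigma$-algebra such that $\alpha_k$ is a generating VWB partition of the factor $(X,\mathfs B_k,\mu,S)$. By Ornstein's theory — a VWB partition is finitely determined, and finitely determined partitions generate Bernoulli factors \cite{Ornstein-Isomorphism-Thm,Ornstein-Weiss-Geodesic-Flows} — each $(X,\mathfs B_k,\mu,S)$ is a Bernoulli automorphism. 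Since $\mathfs B_1\subset\mathfs B_2\subset\cdots$ and $\bigvee_k\mathfs B_k=\mathfs B$, the stability of the Bernoulli property under increasing unions of invariant sub-$\sigma$-algebras \cite{Ornstein-Isomorphism-Thm} yields that $(X,\mathfs B,\mu,S)=(X,\mathfs B,\mu,T^t)$ is a Bernoulli automorphism.

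Next I would pass from this single map to the flow. Reparametrize time by $\widetilde T^s:=T^{ts}$ for $s\in\R$: this is again a measurable flow, and its time-one map $\widetilde T^1=T^t$ is a Bernoulli automorphism, so $\{\widetilde T^s\}$ is a Bernoulli flow by definition. By the theorem that every time-$s$ map of a Bernoulli flow is a Bernoulli automorphism \cite{Ornstein-Imbedding-in-Flows} (quoted in the introduction), $\widetilde T^s$ is Bernoulli for all $s\neq0$; taking $s=1/t$ gives that $T^1=\widetilde T^{1/t}$ is a Bernoulli automorphism, i.e. $\textsf T$ is a Bernoulli flow, completing the reduction.

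I expect the only genuine obstacle to be internal to the first step, and it is one I would quote rather than reprove: Ornstein's result that a VWB (hence finitely determined) partition generates a Bernoulli factor — which rests on the equivalence of the VWB property with finite determination and on the $\bar d$-matching/copying arguments of \cite{Ornstein-Isomorphism-Thm,Ornstein-Weiss-Geodesic-Flows} — together with the stability lemma for increasing unions of invariant Bernoulli sub-$\sigma$-algebras. Granting these, the second step is a one-line reparametrization using a fact already available in the introduction, and no appeal to the finer structure theory of Bernoulli flows (such as their classification by entropy) is needed.
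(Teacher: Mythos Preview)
The paper does not supply its own proof of this theorem: it is quoted as a known result from \cite{Ornstein-Imbedding-in-Flows,Ornstein-Isomorphism-Thm,Ornstein-Weiss-Geodesic-Flows}, with no argument given. Your outline is a correct reduction to the standard ingredients in those references --- Ornstein's theorem that a VWB partition generates a Bernoulli factor, stability of the Bernoulli property under increasing unions of invariant sub-$\sigma$-algebras, and the fact (recalled in the introduction) that every nonzero time-$s$ map of a Bernoulli flow is Bernoulli --- and the reparametrization $\widetilde T^s:=T^{ts}$ is exactly the right device to pass from $T^t$ Bernoulli to $T^1$ Bernoulli. There is nothing to compare against in the paper itself.
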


\subsection*{Construction of VWB partitions for  equilibrium measures
\cite{Ornstein-Weiss-Geodesic-Flows,Ratner-Flows-Bernoulli}}
Let $\sigma_r:\Sigma_r\to\Sigma_r$ be a  topologically transitive TMF. Throughout this section we assume that
$r$ is {\em not} arithmetic and independent of the {\em past} (which we can assume because of Lemma \ref{LemmaOneSided}).
Fix an equilibrium measure $\mu$ of a bounded H\"older continuous potential with finite pressure,
and let $\nu$ be the induced measure of $\mu$, i.e.
$\mu=\frac{1}{\int_\Sigma rd\nu}\int_\Sigma\int_0^{r(x)}\delta_{(x,t)}dtd\nu(x)$.

Let $\pi_1,\pi_2:\Sigma_r\to\Sigma$ be the projections on the first and second coordinates, respectively.
We now define three $\sigma$--algebras:
\begin{enumerate}[$\circ$]
\item $\alpha =$ partition of $\Sigma$ into cylinders of length one at the zeroth position.
$\bigvee_{i=0}^\infty \sigma^{-i}\alpha$ is the $\sigma$--algebra with information on the
coordinates $x_0^\infty$ of $x\in\Sigma$.
\item  $\mathfs F_{-n}:=\pi^{-1}_1(\bigvee_{i=-n}^\infty \sigma^{-i}\alpha)$, the $\sigma$--algebra
with information on $x_{-n}^\infty$ of $(x,t)\in\Sigma_r$.
\item $\mathfs H:=\pi^{-1}_2[\mathfs B(\R)]$, where $\mathfs B(\R)$ is the Borel $\sigma$--algebra of $\R$.
$\mathfs H$ is the $\sigma$--algebra with information on $t$ of $(x,t)\in\Sigma_r$.
\end{enumerate}

We will  abuse notation and write $\E_\mu(\cdot|x_{-n}^{\infty},t)$ instead of
$\E_\mu(\cdot|\mathfs F_{-n}\vee\mathfs H)(x,t)$ and  $\mu(E|x_{-n}^{\infty},t)$ instead of
$\E_\mu(1_E|\mathfs F_{-n}\vee\mathfs H)(x,t)$.
Since $r$ is independent of past coordinates, it can be easily checked that for all $n\geq 0$:
\begin{equation}\label{Conditional-Mu}
\mu(\cdot|x_{-n}^\infty,t)=1_{[r(x_0^\infty)>t]}(x,t)\cdot [\nu(\cdot|x_{-n}^\infty)\times\delta_t] \ \ \text{ for $\mu$--a.e. $(x,t)$.}
\end{equation}
Actually, there is a way to make sense of the right-hand-side for every $(x,t)$: use (\ref{g-function}) to define
$\nu(\cdot|x_0^\infty)$ for all $x$, and the identity $\nu\circ \sigma^{-n}=\nu$
to extend  to other $n$:
\begin{equation}\label{nu-identity}
\nu(E|x_{-n}^\infty):=\nu(\sigma^{-n}(E)|\sigma^{-n}(x)_0^\infty).
\end{equation}
Given an admissible word $\un{a}$, let $\rho(\un{a}):=\inf\{r(x):x_{-n}^{n}=\un{a}\}$. Let $0<\delta<1$, $n\geq 0$.
Consider the following definitions.

\medskip
\noindent
{\sc $(n,\delta)$--cube:} A set $C=\{(x,t): x_{-n}^{n}=\un{a}, t\in [\tau,\tau+\delta)\}$,
where  $\un{a}$ is an admissible word of length $2n+1$ and $\tau\geq 0$ s.t. $[\tau,\tau+\delta)\subset [0,\rho(\un{a}))$.

\medskip
\noindent
{\sc Canonical partition into $(n,\delta)$--cubes:} A finite or countable partition
whose atoms are $(n,\delta)$--cubes, with the exception of an atom of the form $\{(x,t):\rho(x_{-n}^n)\leq t<r(x)\}$
with measure $\leq\delta$.


\medskip
\noindent
{\sc Pseudo-canonical partition into $(n,\delta)$--cubes:}  A finite partition that can be refined to
a canonical partition into $(n,\delta)$--cubes.

\begin{lem}[\cite{Ratner-Flows-Bernoulli}]\label{Lemma-gamma-VWB}
If $n_0\geq 0$ and $0<t_0<\inf(r)$, then every pseudo-canonical partition into $(n_0,\delta_0)$--cubes
is very weak Bernoulli for $(\Sigma_r,\sigma_r^{t_0},\mu)$.
\end{lem}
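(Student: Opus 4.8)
The plan is to adapt Ratner's argument for Anosov suspension flows, replacing her appeal to Bowen's Gibbs property by the countable--alphabet thermodynamic formalism (Theorems~\ref{Thm-RPF} and~\ref{Prop-Prod-Strctr-measure}). Since $r$ is not arithmetic, Theorem~\ref{Thm-WM-K} shows that $(\Sigma_r,\sigma_r^{t_0},\mu)$ is a $K$ automorphism, hence $K$--mixing (Theorem~\ref{Thm-K-Mixing}). By Lemma~\ref{OW-Lemma} it is enough to prove: for every $\epsilon>0$ there is $N_0$ so that for all $N'\geq N\geq N_0$, every $n\geq 1$, and $\epsilon$--a.e.\ atom $A$ of $\bigvee_{k=N}^{N'}\sigma_r^{kt_0}\alpha$, there exists an $\epsilon$--measure preserving bijection $\theta=\theta_{A,n}:(\Sigma_r,\mu)\to(A,\mu_A)$ such that, on a set of $\mu$--measure $\geq 1-\epsilon$, one has $\alpha(\sigma_r^{-it_0}x)=\alpha(\sigma_r^{-it_0}\theta(x))$ for all but a fraction $\leq\epsilon$ of the indices $i\in\{1,\ldots,n\}$. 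As preliminary reductions I would refine $\alpha$ to a canonical partition into $(n_0,\delta_0)$--cubes, keep only finitely many of them of total mass $>1-\epsilon$, and discretize the base height at a fine scale $\delta_1\ll\delta_0\epsilon$; the residual ``tail'' cube, and the countable alphabet in general, are controlled uniformly by the Gibbs estimates established in the proof of Theorem~\ref{Prop-Prod-Strctr-measure}.

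The construction of $\theta$ is based on the disintegration \eqref{Conditional-Mu}: because $r$ is independent of the past, once the forward sequence $x_0^\infty$ and the base height $t$ are fixed, the only remaining randomness is the backward sequence, distributed as $\nu(\cdot|x_0^\infty)$. Since an atom $A$ of $\bigvee_{k=N}^{N'}\sigma_r^{kt_0}\alpha$ constrains only remote-future coordinates, $A$ is $(x_0^\infty,t)$--measurable; thus $\mu$ and $\mu_A$ are mixtures of the \emph{same} fibre measures $\nu(\cdot|x_0^\infty)\times\delta_t$ and differ only in the mixing law on $(x_0^\infty,t)$. Writing $D$ for the finite datum $(x_0^J,\ \delta_1\text{--interval of }t)$ with $J$ large, two estimates localize this difference to $D$: (i) by the uniform H\"older continuity of the $g$--function of $\nu$ (and the local product structure, Theorem~\ref{Prop-Prod-Strctr-measure}), the Radon--Nikodym derivative of $\nu(\cdot|x_0^\infty)$ with respect to $\nu(\cdot|x_0^J\tilde z)$ lies in $[1-\epsilon_J,1+\epsilon_J]$ \emph{uniformly over all cylinders and all tails $\tilde z$}, with $\epsilon_J\downarrow0$; and (ii) by the continuity of the flow (Lemma~\ref{Lemma-BW}), the finite symbolic range of dependence of the $(n_0,\delta_0)$--cubes, and the H\"older decay of $r$, the dependence of the name $\alpha(\sigma_r^{-it_0}x)$ on $(x_0^\infty,t)$ factors through $D$ off a ``boundary layer'' of $\mu$--measure $\leq\epsilon_J$, uniformly in $i$ and $n$. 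I would then take $\theta$ to fix the backward coordinates and to preserve $D$, acting on the remaining coordinates so as to land in $A$ and to transport $\mu$'s conditional measures to $\mu_A$'s; estimate (i) forces the induced Radon--Nikodym derivative into $[1-\epsilon_J,1+\epsilon_J]$ on each fibre, and $K$--mixing applied to the finite partition by $D$ (whose atoms have $\mu$--mass bounded below, after truncation) makes the $\mu(\cdot|A)$-- and $\mu$--laws of $D$ close enough, for $\epsilon$--a.e.\ $A$ with $N_0=N_0(J,\epsilon)$, that the two estimates combine into the $\epsilon$--measure preserving property, off an exceptional set of measure $<\epsilon$.

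Finally, since $\theta$ fixes the backward coordinates and preserves $D$, estimate (ii) gives $\alpha(\sigma_r^{-it_0}x)=\alpha(\sigma_r^{-it_0}\theta(x))$ for every $i\leq n$ for which $\sigma_r^{-it_0}x$ avoids the boundary layer; by Birkhoff's ergodic theorem the density of the exceptional indices is $\leq\epsilon_J+\epsilon$ for $\mu$--a.e.\ $x$ and all $n\geq n_1(x)$, and the finitely many smaller $n$ are absorbed into the error. Feeding this into Lemma~\ref{OW-Lemma} yields $\overline d\bigl(\{\sigma_r^{-it_0}\alpha\}_1^n,\{\sigma_r^{-it_0}\alpha|A\}_1^n\bigr)\leq 16\bigl(O(\epsilon_J)+\epsilon\bigr)$ for $\epsilon$--a.e.\ $A$ and all $n$, which is the VWB property after relabeling $\epsilon$. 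I expect the main obstacle to be making all the error terms ($\epsilon_J$, the $K$--mixing error, the truncation and height discretization, the boundary layers) simultaneously uniform in $n,N,N'$ and verifying the $\epsilon$--measure preserving property of $\theta$; this is exactly where the \emph{uniform} Gibbs property of the countable--alphabet equilibrium measure is indispensable, and the infinite alphabet is a secondary but non-routine complication.
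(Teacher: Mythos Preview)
Your overall strategy---use $K$--mixing to decouple $A$ from a finite family of cubes, build an $\epsilon$--measure preserving map via the fibre disintegration \eqref{Conditional-Mu}, control Radon--Nikodym derivatives with the H\"older $g$--function, and invoke Lemma~\ref{OW-Lemma}---is exactly the paper's approach. But there is a genuine gap: you have the direction of the conditioning reversed.

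An atom $A\in\bigvee_{k=N}^{N'}\sigma_r^{kt_0}\gamma$ is determined by $\sigma_r^{-kt_0}(x,t)$ for $k\in[N,N']$, i.e.\ by the \emph{far past} of the flow orbit. Concretely (as in \eqref{n-one-n-two}), $A$ is a union of sets $\{(x,t):x\in D_i,\ a_i(x)\leq t<b_i(x)\}$ where $D_i$ constrains the \emph{negative} coordinates $x_{-n_2}^{-n_1}$ with $n_1\gtrsim t_0N/\sup(r)$. So $A$ is \emph{not} $(x_0^\infty,t)$--measurable, and your key sentence ``$\mu$ and $\mu_A$ are mixtures of the same fibre measures $\nu(\cdot|x_0^\infty)\times\delta_t$'' is false: on each fibre, $\mu_A$ is a genuinely different (conditional) measure. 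Correspondingly, the names to be matched are the \emph{forward} ones $\gamma(\sigma_r^{it_0}\cdot)$, not $\gamma(\sigma_r^{-it_0}\cdot)$. Your map $\theta$, which fixes backward coordinates and acts on forward ones, cannot land in $A$ (a past constraint) and would not preserve backward names anyway, since $r$ depending only on $x_0^\infty$ means backward return times involve forward coordinates.

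The fix is to reverse everything: the paper's map $\Xi$ preserves $(x_{-n}^\infty,t)$ and alters $x_{-\infty}^{-n-1}$ to land in $A$. Because $r$ is independent of the past, the forward orbit---hence every forward name $\gamma(\sigma_r^{it_0}\cdot)$---is determined by $(x_{-n}^\infty,t)$, and matching is automatic on the good set, up to the boundary layer of the cube partition. One further point: your appeal to Birkhoff's theorem (``for all $n\geq n_1(x)$, and the finitely many smaller $n$ are absorbed'') does not give the required uniformity in $n$, since $n_1(x)$ is unbounded; the paper instead bounds the bad-index fraction by a Markov inequality against the $\delta$--thickened cube boundaries, which works for every $m\geq 0$.
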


\begin{proof}
This was proved (with different terminology and notation) in \cite{Ornstein-Weiss-Geodesic-Flows} for geodesic flows, and in \cite{Ratner-Flows-Bernoulli} for TMF built on subshifts of finite type.
What follows is a detailed exposition of the argument in \cite{Ratner-Flows-Bernoulli}, with some missing details added, and one (minor)  point clarified.

\medskip
Let $\gamma$ be a pseudo-canonical partition into $(n_0,\delta_0)$--cubes, and
take $N'\geq N>\frac{n_0}{t_0}\sup(r)$. Every $A\in\bigvee_{k=N}^{N'}\sigma_r^{t_0 k}\gamma$
is a countable union of sets of the form $$\{(x,t): x\in D_i, a_i(x)\leq t<b_i(x)\},$$ where
$D_i$ are cylinders in $\bigvee_{i=n_1(i)}^{n_2(i)} \sigma^i\alpha$ with
\begin{equation}\label{n-one-n-two}
\left\lfloor\tfrac{t_0 N}{\sup(r)}\right\rfloor-n_0-1\leq n_1(i)\leq n_2(i)\leq \left\lceil \tfrac{t_0 N'}{\inf(r)}\right\rceil+n_0+1,
\end{equation}
and  $a_i,b_i$ are independent of the past coordinates.

Fix $\epsilon>0$, and let $n\geq 0,\delta\in(0,1)$ to be determined later. Partition $\Sigma_r$ into finitely
many $(n,\delta)$--cubes $\mathcal C_{n,\delta}:=\{C_1,\ldots,C_m\}$ plus an additional ``error set"
with measure $\leq \delta$.

\medskip
\noindent
{\sc Step 1.\/} {\em $\exists N_0=N_0(n,\delta)>0$ s.t. for all $C\in\mathcal C_{n,\delta}$,
for all $N'\geq N\geq N_0$, and for $\delta$--a.e. $A\in\bigvee_{n=N}^{N'}\sigma_r^{t_0 n}\gamma$,
it holds $\left|\tfrac{\mu(A\cap C)}{\mu(A)\mu(C)}-1\right|<\delta$.}

\medskip
\noindent
{\em Proof.\/} Since $r$ is not arithmetic, $(\Sigma_r,\sigma_r^{t_0},\mu)$ is a K automorphism (Theorem \ref{Thm-WM-K}).
Now use Theorem \ref{Thm-K-Mixing} and the finiteness of $\mathcal C_{n,\delta}$.

\medskip
\noindent
{\sc Step 2.\/} {\em For all $A,C$ as in step 1, there is $(z,s)\in A\cap C$ s.t.
$\mu(A\cap C|z_{-n}^\infty, s)>0$ and $\mu(\cdot|z_{-n}^\infty,s)$ is non-atomic.
We choose one such pair for each $A,C$ and write $(z,s):=(z(A,C), s(A,C))$.}

\medskip
\noindent
{\em Proof.\/}
By Lemma \ref{Lemma-Non-Atomic} and (\ref{nu-identity}), $\nu(\cdot|z_{-n}^\infty)$ is non-atomic for $\nu$--a.e. $z$,
so $\mu(\cdot|z_{-n}^\infty,s)=1_{[r(x_0^\infty)>s]}[\nu(\cdot|z_{-n}^\infty)\times\delta_t]$ is non-atomic
for $\mu$--a.e. $(z,s)\in A\cap C$. Also $\bigl|\frac{\mu(A\cap C)}{\mu(A)\mu(C)}-1\bigr|<\delta<1\Rightarrow
\mu(A\cap C)>0\Rightarrow\mu(A\cap C|z_{-n}^\infty, s)>0$ for a subset $(z,s)\in A\cap C$ of positive $\mu$--measure.
Therefore there is $(z,s)\in A\cap C$ satisfying step 2.

\medskip
Given $\delta>0$, let us write $a=e^{\pm\delta}$ whenever $e^{-\delta}\leq a\leq e^\delta$.

\medskip
\noindent
{\sc Step 3.\/} {\em Given $\delta>0$, the following holds for all $n$ large enough.
If $(x,t), (z,s)\in C\in\mathcal C_{n,\delta}$,
then the map $\Theta_{x,t}^{z,s}:(C,\mu(\cdot|x_{-n}^\infty,t))\to (C,\mu(\cdot|z_{-n}^\infty,s))$, $\Theta_{x,t}^{z,s}(y,t)=(\vartheta({y}),s)$, where $\vartheta(y)=(y_{-\infty}^{-n-1},z_{-n}^\infty)$, has
Radon-Nikodym derivative equal to $e^{\pm\delta}$.}

\medskip
\noindent
{\em Proof.\/}
Write $C=B\times I$, where $B=_{-n}[b_{-n},\ldots,b_n]$ contains $x,z$
and $I$ is an interval of length $\delta$ containing $t,s$. The Radon-Nikodym derivative of $\Theta_{x,t}^{z,s}$
equals the Radon-Nikodym derivative of $\vartheta:(B,\nu(\cdot|x_{-n}^\infty))\to (B,\nu(\cdot|z_{-n}^\infty))$.
To estimate this latter derivative, let $B':=_{-(n+m)}[b_{-(n+m)},\ldots, b_n]\subset B$ be a cylinder,
and let $\epsilon_n:=\sum\limits_{k\geq n}\var_k(\log g)$.
By (\ref{g-function}) and (\ref{nu-identity}),
$$
\frac{\nu(B'|x_{-n}^\infty)}{\nu(\vartheta(B')|z_{-n}^\infty)}=\frac{g_{m}(b_{-(n+m)}^n,x_{n+1}^\infty)}{g_{m}(b_{-(n+m)}^n,z_{n+1}^\infty)}=e^{\pm\epsilon_n},
$$
thus $\nu(B'|x_{-n}^\infty)=e^{\pm\epsilon_n}\nu(\vartheta(B')|z_{-n}^\infty)$ for every cylinder  $B'\subset B$.
Since the cylinders generate the $\sigma$--algebra of Borel sets of $B$,
$\nu(E|x_{-n}^\infty)=e^{\pm\epsilon_n}\nu(\vartheta(E)|z_{-n}^\infty)$ for all Borel sets $E\subset B$,
hence the Radon-Nikodym derivative of $\vartheta$ equals $e^{\pm\epsilon_n}$.
Since $\log g$ is H\"older continuous, $\epsilon_n\xrightarrow[n\to\infty]{}0$, thus $\epsilon_n<\delta$ for all $n$
large enough.

\medskip
\noindent
{\sc Step 4.\/} {\em For all $A,C,(z,s)$ as in steps {\rm 1--2},
there is an invertible bi-measurable map
$
\Psi:(C,\mu(\cdot|z_{-n}^\infty,s))\to (A\cap C, \mu(\cdot|z_{-n}^\infty,s))
$ with constant Radon-Nikodym derivative. Call the constant $D(A,C)$.
}

\medskip
\noindent
{\em Proof.\/}
Any two non-atomic Lebesgue probability spaces are measure theoretically isomorphic.
$(C,\mu(\cdot|z_{-n}^\infty,s))$ and $(A\cap C,\mu(\cdot|z_{-n}^\infty,s))$ are non-atomic Lebesgue
measure spaces, so instead of an isomorphism there is an invertible bi-measurable map
$\Psi :(C,\mu(\cdot|z_{-n}^\infty,s))\to (A\cap C, \mu(\cdot|z_{-n}^\infty,s))$
with constant Radon-Nikodym derivative equal to $D(A,C):=\frac{\mu(A\cap C|z_{-n}^\infty,s)}{\mu(C|z_{-n}^\infty,s)}$.

\medskip
Let $\Omega:=\bigcup_{i=1}^m C_i$, $\mu(\Omega)>1-\delta$.

\medskip
\noindent
{\sc Step 5.\/} {\em If $\delta$ is sufficiently small, $n$ is sufficiently large, and $N_0=N_0(n,\delta)$ as in step 1,
then for all $N'\geq N\geq N_0$,  for $\delta$--a.e. $A\in\bigvee_{k=N}^{N'}\sigma^{t_0k}\gamma$,
there is a map $\Xi:(\Sigma_r,\mu)\to (A,\mu(\cdot|A))$ s.t.
\begin{enumerate}[$(1)$]
\item $\Xi(x,t)=(y,t)$ with $y_{-n}^\infty=x_{-n}^\infty$ for $(x,t)\in\Omega$,
\item $\Xi$ is invertible and bi-measurable,
\item $\Xi$ is $5\delta$--measure preserving.
\end{enumerate}
}

\medskip
\noindent
{\em Proof.\/} For each $A,C, (z,s)$ as in steps 1--2, define $\Xi\restriction_C:C\to A\cap C$ by
$$
\Xi(x,t)=(\Theta_{z,s}^{x,t}\circ\Psi\circ\Theta_{x,t}^{z,s})(x,t).
$$
Now define $\Xi$ on $\Sigma_r\setminus\Omega$ to take values on
$A\setminus \Omega$ via a bijective measure preserving map. Thus (1) holds\footnote{Our construction
of $\Xi$ differs from \cite{Ratner-Flows-Bernoulli}, since it is not clear to us that her construction
leads to a measurable map. Instead, we follow the construction used in \cite{Ornstein-Weiss-Geodesic-Flows}.}.

To prove (2), first note that $C\in \mathfs F_{-n}\vee\mathfs H$, hence we can write
$$
\mu\upharpoonright_C=\const\int_C \mu(\cdot|x_{-n}^\infty,t)d\mu(x,t).
$$
By steps 3--4, $\Xi\restriction_C:(C,\mu(\cdot|x_{-n}^\infty,t))\to (A\cap C, \mu(\cdot|x_{-n}^\infty,t))$ is an
absolutely continuous bijection, thus $\Xi\restriction_C:(C,\mu)\to (A\cap C,\mu)$ is bijective a.e., which gives (2).

Let us now prove (3). By steps 3--4, $\Xi\restriction_C:(C,\mu(\cdot|x_{-n}^\infty,t))\to (A\cap C, \mu(\cdot|x_{-n}^\infty,t))$
has Radon-Nikodym derivative $e^{\pm2\delta}D(A,C)$, thus if $E\subset C$ is measurable then
\begin{align*}
\mu(\Xi(E))&=\const\int_C \mu(\Xi(E)|x_{-n}^\infty,t)d\mu(x,t)\\
&=\const e^{\pm 2\delta}D(A,C)\int_C\mu(E|x_{-n}^\infty,t)d\mu(x,t)=\const e^{\pm 2\delta}D(A,C)\mu(E).
\end{align*}

Therefore $\Xi:C\to A\cap C$ is absolutely continuous with Radon-Nikodym derivative equal to $e^{\pm2\delta}K$ for some
constant $K=K(A,C)$. Since $\Xi$ is a bijection a.e., $K=e^{\pm 2\delta}\tfrac{\mu(A\cap C)}{\mu(C)}$.
If $\delta$ is so small that $1-\delta>e^{-2\delta}$, step 1 gives that
$
K=e^{\pm 4\delta}\mu(A).
$
Since $C\in\mathcal C_{n,\delta}$ is arbitrary, $\Xi\restriction_\Omega:(\Omega,\mu)\to (A\cap \Omega,\mu)$
has Radon-Nikodym derivative equal to $e^{\pm 4\delta}\mu(A)$.
After normalizing the measure of $A$, we find that the Radon-Nikodym derivative of
$
\Xi:(\Sigma_r,\mu)\to \bigl(A,\mu(\cdot|A)\bigr)
$ equals $e^{\pm4\delta}$ on $\Omega$.
If $\delta$ is so small $e^{4\delta}<1+5\delta$, we conclude that $\Xi$ is $5\delta$--measure preserving.

\medskip
\noindent
{\sc Step 6.\/} {\em If $\delta$ is sufficiently small and $n$ is sufficiently large, then for all
$m\geq 0$, for all $N'\geq N\geq N_0(n,\delta)$, and for $\delta$--a.e. $A\in\bigvee_{i=N}^{N'}\sigma_r^{it_0}\gamma$,
$$
\frac{1}{m}\#\{1\leq i\leq m:\sigma_{r}^{it_0}(x,t),\sigma_r^{it_0}(\Xi(x,t))\text{ are in different $\gamma$--atoms}\}< \epsilon
$$
holds for a set $(x,t)\in\Sigma_r$ of measure $\geq 1-\delta$.
}

\medskip
\noindent
{\em Proof.\/} This follows, as in \cite{Ornstein-Weiss-Geodesic-Flows,Ratner-Flows-Bernoulli},
from the fact that $\Xi(x,t)=(y,t)$ with $y_{-n}^\infty=x_{-n}^\infty$ for $(x,t)\in\Omega$.
Let us recall the argument.

Let $\wh{\gamma}$ denote the (countable) canonical partition into $(n_0,\delta_0)$--cubes which refines $\gamma$,
and assume that $n>n_0$.
If $\sigma_r^{it_0}(x,t), \sigma_r^{i t_0}(y,s)$ belong to different $\gamma$--atoms, then they belong to
different $\wh{\gamma}$--atoms. At least one of these atoms is an $(n_0,\delta_0)$--cube of the form $C:=B\times [a,a+\delta_0)$ with $B\in\bigvee_{j=-n_0}^{n_0}\sigma^j \alpha$. Using that $n>n_0$, that $r$ is independent of the past, and
that $x_{-n}^\infty=y_{-n}^\infty$, we get that $\sigma_r^{it_0}(x,t)$ belongs to
$\sigma_r^{[-\delta,\delta]}(C):=\bigcup_{|\theta|<\delta}\sigma_r^{\theta}(C)$. Let
$\sigma_r^{[-\delta,\delta]}(\wh{\gamma})$ be the union of all $\sigma_r^{[-\delta,\delta]}(C)$, $C\in\wh{\gamma}$ a $(n,\delta)$--cube.

Defining
$Z_m(x,t):=\frac{1}{m}\#\{1\leq i\leq m: \sigma_{r}^{it_0}(x,t),\sigma_r^{it_0}(\Xi(x,t))$ are in different
$\gamma$--atoms$\}$ and
$Y_m(x,t):=\sum_{i=1}^m 1_{\sigma_r^{[-\delta,\delta]}(\wh{\gamma})}(\sigma_r^{i t_0}(x,t))$,
the previous paragraph and the Markov inequality imply that
$$\mu[Z_m\geq \epsilon]\leq \mu[Y_m\geq \epsilon m]\leq\frac{1}{\epsilon m}\int Y_m d\mu\leq
\epsilon^{-1}\mu[\sigma_r^{[-\delta,\delta]}(\wh{\gamma})].$$
If we choose $\delta$ so small that $\mu[\sigma_r^{[-\delta,\delta]}(\wh{\gamma})]<\epsilon^2$, then
$\mu[Z_m\geq \epsilon]<\epsilon$ as required.

\medskip
\noindent
{\it Completion of the proof of Lemma \ref{Lemma-gamma-VWB}.} Given $\epsilon>0$,
let $\delta$ be sufficiently small and $n$ sufficiently large s.t. steps 1--6 hold, and $100\delta<\epsilon$.
By Lemma \ref{OW-Lemma}, for all $m\geq 0$, for all $N'\geq N\geq N_0(n,\delta)$, and for
$\delta$--a.e. $A\in\bigvee_{k=N}^{N'}\sigma^{kt_0}\gamma$ it holds
$\ov{d}(\{\sigma_r^{-i t_0}\gamma\}_1^m,\{\sigma_r^{-i t_0}\gamma|A\}_1^m)<16\times 5\delta<\epsilon.
$
Since $\epsilon>0$ is arbitrary, $\gamma$ is VWB.
\end{proof}

\subsection*{Proof of Theorem \ref{Thm-Bernoulli}} Fix $t_0\neq 0$, and construct an increasing sequence of pseudo-canonical partitions into $(n_k,\delta_k)$--cubes, with $n_k\to\infty$ and $\delta_k\to 0$.
This sequence of partitions generates the full $\sigma$--algebra of $\Sigma_r$. Since each of these
pseudo-canonical partitions is VWB for $\sigma_r^{t_0}$ (Lemma  \ref{Lemma-gamma-VWB}), it follows
from Ornstein Theory \cite{Ornstein-Imbedding-in-Flows,Ornstein-Isomorphism-Thm,Ornstein-Weiss-Geodesic-Flows}
that $(\Sigma_r,\sigma_r,\mu)$ is a Bernoulli flow.\hfill $\Box$

\vspace{.5cm}
\noindent
{\bf\large Part 2. Smooth Flows in Three Dimensions}

\section{Proof of Theorem \ref{Thm-Main}}\label{Section-Thm-Main}
Let $M$ be a three dimensional compact $C^\infty$ Riemannian manifold, let $X:M\to TM$ be a non-vanishing $C^{1+\epsilon}$ vector field,
and let $\{T^t\}$ be the flow on $M$ generated by $X$. Let $F:M\to\R$ be a bounded H\"older continuous function, and let $\nu$
be an equilibrium measure of $F$.
Our task is to show that
 $\nu$ has at most countably many ergodic components $\nu_i$ with positive entropy, and that
 $\{T^t\}$ is Bernoulli up to a possible period with respect to each $\nu_i$.

That $\nu$ has at most countably many ergodic components with positive entropy was proved
in \cite{Lima-Sarig} in the special case $F\equiv 0$. The same proof works for general bounded
H\"older continuous $F$ almost verbatim. Let us recap the idea.
For a fixed $\chi>0$, we prove that $F$ has at most countably many
$\chi$--hyperbolic\footnote{$\nu$ is $\chi$--hyperbolic if $\nu$--a.e. point has one
Lyapunov exponent $>\chi$ and another $<-\chi$.} ergodic
equilibrium measures. This happens because every ergodic equilibrium measure
on a TMS is carried by a topologically transitive TMS.
If there were uncountably many $\chi$--hyperbolic equilibrium measures for $F$,
then their convex combination would generate a $\chi$--hyperbolic equilibrium measure on
a TMS with uncountably many ergodic components. Taking the union over $\chi_n=1/n$ gives countability.

It remains to show that if $\nu$ is ergodic with positive entropy, then $\nu$ is Bernoulli up to a possible period.
Given a TMF $\sigma_r:\Sigma_r\to\Sigma_r$, let
$$
\Sigma_r^\#:=\{(x,t)\in\Sigma_r:\text{$\{x_i\}_{i>0}$, $\{x_i\}_{i<0}$ have constant subsequences}\}.
$$
By the Poincar\'e recurrence theorem, $\Sigma_r^\#$ has full measure for every $\sigma_r$--invariant probability measure.

Apply \cite[Theorem 1.2]{Lima-Sarig} to the flow $(M,\nu,\{T^t\})$ to get a TMF $\sigma_r:\Sigma_r\to \Sigma_r$ and a H\"older continuous map $\pi_r:\Sigma_r\to M$ s.t.:
\begin{enumerate}[(1)]
\item $\pi_r\circ \sigma_r^t=T^t\circ\pi_r$, $\forall t\in\mathbb R$.
\item $\pi_r[\Sigma_r^\#]$ has full $\nu$--measure.
\item $\pi_r:\Sigma_r^\#\to M$ is finite-to-one.
\end{enumerate}

Notice that $\Phi:=F\circ\pi_r$ is a bounded H\"older continuous function. Arguing as in \cite[Theorem 6.2]{Lima-Sarig}, one can prove
that $\Phi$ has an ergodic equilibrium measure $\mu$ s.t. $\mu\circ\pi_r^{-1}=\nu$. By ergodicity, $\mu$ is carried by a topologically transitive
TMF of $\Sigma_r$. By Theorems \ref{Thm-K-Mixing} and \ref{Thm-Bernoulli}, $\mu$ is Bernoulli up to a period.
Therefore $(M,\nu,\{T^t\})$ is a finite-to-one factor of a flow which is Bernoulli up to a period, so it is enough to prove the lemma below.

\begin{lem}
If a measurable flow is Bernoulli up to a period, then so are its finite-to-one factors.
\end{lem}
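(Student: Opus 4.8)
The plan is to use the two normal forms allowed by the definition of ``Bernoulli up to a period'' and to reduce everything to a statement about automorphisms handled by Ornstein's theory. Write $\mathsf{T}=(X,\mathfs B,\mu,\{T^t\})$ for the given flow, $\pi\colon X\to Y$ for the finite-to-one factor map, and $\mathsf{S}=(Y,\mathfs C,\nu,\{S^t\})$ for the factor. First I would note that $\mathsf{T}$ is ergodic: a Bernoulli flow is weak mixing, a rotational flow is ergodic, and the product of a weak mixing flow and an ergodic flow is ergodic. Hence $\mathsf{S}$ is ergodic, so $y\mapsto\#\pi^{-1}(y)$ is $\{S^t\}$-invariant, hence a.e.\ equal to a constant $N\in\mathbb{N}$. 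If $\mathsf{T}$ is a Bernoulli flow then $T^1$ is a Bernoulli automorphism, so $S^1$ --- a factor of $T^1$ --- is Bernoulli by Ornstein's theorem that factors of Bernoulli automorphisms are Bernoulli, and $\mathsf{S}$ is a Bernoulli flow (finiteness of the fibres is not used in this case). It remains to treat the case where $\mathsf{T}$ is isomorphic to the product of a Bernoulli flow and a rotational flow of period $c>0$; by Lemma~\ref{lemma-constant-versus-product} we may then take $\mathsf{T}$ to be the suspension with constant roof $\equiv c$ over a Bernoulli automorphism $(Z,\rho,R)$, so $X=Z\times[0,c)$, $T^t(z,s)=(R^{\lfloor(s+t)/c\rfloor}z,(s+t)\bmod c)$ and $\mu=\rho\times c^{-1}\mathrm{Leb}$.

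The finite-to-one hypothesis enters only to exhibit a rotational factor of $\mathsf{S}$. Fix a measurable enumeration $\pi^{-1}(y)=\{(z_i(y),s_i(y))\}_{i=1}^N$. Since $\pi^{-1}(S^ty)=T^t(\pi^{-1}(y))$, the multiset $\{s_i(y)\bmod c\}\subset\mathbb{R}/c\mathbb{Z}$ is translated by $t$ under $S^t$; thus its translation class is $\{S^t\}$-invariant, hence a.e.\ equal to a fixed multiset $M$ by ergodicity. Writing $\{s_i(y)\bmod c\}=M+\theta(y)$, the translation $\theta(y)$ is determined modulo the finite stabilizer of $M$, so $\theta$ defines a measurable map $Y\to\mathbb{R}/(c/m)\mathbb{Z}$ (with $m$ the order of that stabilizer) satisfying $\theta\circ S^t=\theta+t$. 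In particular $\mathsf{S}$ is not weak mixing; its group of eigenvalues is a nontrivial subgroup $\tfrac{2\pi}{c'}\mathbb{Z}$ of $\tfrac{2\pi}{c}\mathbb{Z}$ for some $c'=c/k$, $k\in\mathbb{N}$, and fixing an eigenfunction realizing the generator gives a factor map $H\colon\mathsf{S}\to(\text{rotational flow of period }c')$ with $H\circ S^t=H+t\bmod c'$.

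Next I would descend to base automorphisms. Disintegrating $\nu$ along $H$, let $\mathcal{Y}_0:=H^{-1}(0)$ carry the conditional measure and put $\widehat R:=S^{c'}|_{\mathcal{Y}_0}$. Then $(y,s)\mapsto S^sy$ identifies $\mathsf{S}$ with the suspension with constant roof $\equiv c'$ over $(\mathcal{Y}_0,\widehat R)$. On the other side, the cross-section $\{(z,s):s\bmod c'=0\}$ of $\mathsf{T}$ has return time $\equiv c'$, and its return map is the height-$k$ tower over $(Z,R)$; since a height-$k$ tower over a Bernoulli automorphism is isomorphic to a Bernoulli automorphism tensored with the cyclic rotation of order $k$ (take a Bernoulli $R'$ with $h(R')=h(R)/k$, so $(R')^k\cong R$ by Ornstein, and identify the tower with $R'\times(\mathbb{Z}/k)$), and since $\pi$ restricts to a finite-to-one factor map of this return system onto $(\mathcal{Y}_0,\widehat R)$, it follows that $\widehat R$ is a finite-to-one factor of a Bernoulli automorphism tensored with a finite cyclic rotation. (As a check, $S^c=(S^{c'})^k\cong\widehat R^{\,k}\times\mathrm{Id}_{[0,c')}$ whereas $T^c=R\times\mathrm{Id}_{[0,c)}$, so $\widehat R^{\,k}$, being isomorphic to a.e.\ ergodic component of $S^c$ and hence a factor of the Bernoulli automorphism $R$, is Bernoulli.) The discrete-time analogue of the lemma --- a finite-to-one factor of a Bernoulli automorphism tensored with a finite cyclic rotation is again of that form --- then gives $\widehat R\cong(\text{Bernoulli})\times(\mathbb{Z}/k')$ for some $k'\mid k$; absorbing the cyclic factor into the roof presents $\mathsf{S}$ as a suspension with constant roof over a Bernoulli automorphism, which by Lemma~\ref{lemma-constant-versus-product} is the product of a Bernoulli flow and a rotational flow.

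The step I expect to be the real obstacle is that discrete-time analogue: passing from ``$\widehat R$ is a finite-to-one factor of a Bernoulli automorphism, up to a finite cyclic rotation'' to ``$\widehat R$ is Bernoulli, up to a finite cyclic rotation''. The elementary fact that factors of Bernoulli automorphisms are Bernoulli only disposes of the cyclic-rotation-free part; isolating the cyclic part and splitting it off requires the relative isomorphism theory of Ornstein and Thouvenot (factors of relatively Bernoulli extensions are relatively Bernoulli over the corresponding factor of the base, and a relatively Bernoulli extension of a finite rotation splits as a direct product). A secondary, purely technical, difficulty is that $H^{-1}(0)$ is $\nu$-null, so the identifications in the third paragraph have to be made through the disintegration of $\nu$ along $H$ and the associated ergodic decompositions.
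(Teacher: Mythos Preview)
Your treatment of the pure Bernoulli case matches the paper's. For the periodic case, however, you take a substantially longer route than necessary, and the detour leads you to a step you yourself flag as an obstacle.

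The paper's argument is direct: write $\mathsf T$ as the constant suspension (roof $\equiv 1$, say) over a Bernoulli automorphism $(\Sigma,\mu_0,\tau)$, and simply \emph{push the base section forward}: set $Y_0:=\pi(\Sigma\times\{0\})$. The finite-to-one hypothesis is used exactly once, to show that $\{t>0:S^t(y)\in Y_0\}$ has no accumulation points (infinitely many hits in a bounded time interval would force infinitely many $\pi$-preimages), so $Y_0$ is a genuine Poincar\'e section. Since $T^1$ preserves $\Sigma\times\{0\}$, the set $Y_0$ is $S^1$-invariant, from which one argues the first-return time is constant. The first-return map $U$ on $(Y_0,\eta_0)$ with $\eta_0=(\mu_0\times\delta_0)\circ\pi^{-1}$ is then a factor of $(\Sigma,\mu_0,\tau)$, hence Bernoulli by Ornstein's theorem on factors. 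No relative (Ornstein--Thouvenot) theory is needed.

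By contrast, your approach manufactures the rotational factor abstractly via the multiset of heights and an eigenfunction $H$, takes $\mathcal Y_0=H^{-1}(0)$, and then tries to compare the induced map $\widehat R$ with a factor of a Bernoulli-times-cyclic automorphism. Two problems: (i) the assertion that $\pi$ restricts to a finite-to-one factor map from the $\{s\equiv 0\bmod c'\}$ cross-section of $\mathsf T$ onto $(\mathcal Y_0,\widehat R)$ is not justified --- there is no reason your abstractly defined $H$ satisfies $H\circ\pi(z,s)\equiv s\bmod c'$, so the two sections need not correspond under $\pi$; and (ii) even granting that, you are left with the ``discrete-time analogue'' which, as you note, is not covered by the elementary Ornstein factor theorem. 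The paper sidesteps both issues by choosing the section in $Y$ to be $\pi(\Sigma\times\{0\})$ from the start: then compatibility with $\pi$ is automatic, and the return map is visibly a factor of the Bernoulli base itself rather than of a Bernoulli-times-cyclic system.
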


\begin{proof}
Suppose $\pi:X\to Y$ is a finite-to-one factor map between $\textsf{T}=(X,\mu,\{T^t\})$ and  $\textsf{S}=(Y,\eta,\{S^t\})$. Suppose $\textsf{T}$ is Bernoulli up to a period.

If $\textsf{T}$ is Bernoulli, then $T^1$ is Bernoulli. Since factors of Bernoulli automorphisms are Bernoulli automorphisms
\cite{Ornstein-Factor-Bernoulli}, $S^1$ is a Bernoulli automorphism. By  \cite{Ornstein-Imbedding-in-Flows}, $\textsf{S}$ is a Bernoulli flow.

Assume that $\textsf{T}$ is isomorphic to a Bernoulli flow times a rotational flow. By Lemma \ref{lemma-constant-versus-product}, it is enough to prove the claim below.

\medskip
\noindent
{\sc Claim:} If $\mathsf{T}$ is a constant suspension over a Bernoulli automorphism,
then $\mathsf{S}$ is a constant suspension over a Bernoulli automorphism.

\medskip
\noindent
{\em Proof.\/}
Assume without loss of generality that the roof function of $\mathsf{T}$ is $\equiv 1$, i.e. $\mathsf{T}=(\Sigma_1,\mu,\{T^t\})$
where $T^t(x,s)=(\tau^{\lfloor t+s\rfloor}(x),t+s-\lfloor t+s\rfloor)$, $\mu=\int_\Sigma\int_0^1 \delta_{(x,t)}dtd\mu_0(x)$,
and $(\Sigma,\mu_0,\tau)$ is a Bernoulli automorphism.

\medskip
Let $Y_0:=\pi(\Sigma\times\{0\})$. We claim that $Y_0$ is a Poincar\'e section for $\textsf{S}$. For each $y\in\pi(\Sigma_1)$,
let $I_y:=\{t>0: S^t(y)\in Y_0\}$.
\begin{enumerate}[$\circ$]
\item $I_y\neq\emptyset$: $y=\pi(x,s)\Rightarrow 1-s\in I_y$.
\item $I_y\cap (0,1)$ is finite: if $S^{t_n}(y)=\pi(x_n,0)$ for infinitely many $t_n,x_n$, then $y$ has infinitely many pre-images
$(\tau^{-1}(x_n),1-t_n)$.
\item $I_y$ is infinite: $y=\pi(x,t)\Rightarrow S^{n-t}(y)=\pi(\tau^n(x),0)\Rightarrow n-t\in I_y$, $\forall n>0$.
\end{enumerate}
By symmetry, $\{t<0:S^t(y)\in Y_0\}$ is non-empty, infinite, and has no accumulation points. Therefore $Y_0$ is a Poincar\'e section for $\textsf{S}$.

\medskip
$r(y):=\min\{t>0: S^t(y)\in Y_0\}$ is well-defined and positive $\eta$--a.e. Using that $\pi$ commutes $\mathsf{T}$ and $\mathsf{S}$,
we have $r\circ S^1=r$, thus $r$ is constant $\eta$--a.e.
Let $U:Y_0\to Y_0$, $U(y)=S^{r(y)}(y)$, and let $\eta_0:=(\mu_0\times\delta_0)\circ \pi^{-1}$. $\textsf{S}$ is a constant
suspension over $(Y_0,\eta_0,U)$. But $(Y_0,\eta_0,U)$ is a factor of $(\Sigma,\mu_0,\tau)$, hence it is a Bernoulli automorphism.


%
\end{proof}

\section{Reeb flows}\label{Section-Reeb}

Let $M$ be a compact three dimensional smooth Riemannian manifold without boundary, equipped with the following objects \cite{Geiges-Contact-Topology}:

\medskip
\noindent
{\sc A Contact form:} A smooth $1$--form $\alpha$ on $M$ s.t. $\omega:=\alpha\wedge d\alpha$ is a volume form. In this case,
$\ker(d\alpha)_x:=\{v\in T_x M:d\alpha(v,\cdot)\equiv 0\}$ is one-dimensional for all $x\in M$.

\medskip
\noindent
{\sc The Reeb vector field} (of $\alpha$): The unique vector field $X$ s.t. $X_x\in\ker(d\alpha)_x$ and $\alpha(X_x)=1$
for all $x\in M$. Necessarily $i_X\omega=d\alpha$.

\medskip
\noindent
{\sc The Reeb flow} (of  $\alpha$):
The flow $\{T^t\}$ generated by the Reeb vector field of $\alpha$. This is a smooth flow with positive speed.  $\{T^t\}$  preserves $\alpha$, i.e.
$\alpha(dT^tv)=\alpha(v)$ for all  $v$, since $\frac{d}{dt}(T^t)^\ast\alpha=(T^t)^\ast L_X\alpha=(T^t)^\ast[di_X\alpha+i_X(d\alpha)]=(T^t)^\ast[0+0]=0$.

\noindent
This setup covers geodesic flows of surfaces, and Hamiltonian flows of a system with two degrees of freedom restricted to  regular  energy surfaces  \cite{Abraham-Marsden-Mechanics}.

\medskip
We now add the assumption that $\{T^t\}$ has positive topological entropy.
Let  $\mu$ be an ergodic equilibrium measure of a H\"older continuous potential with positive metric entropy.
By Theorem \ref{Thm-Main}, $\textsf T=(M,\mu,\{T^t\})$ is Bernoulli up to a period. We will show that $\textsf T$ is Bernoulli.  A similar result for absolutely continuous measures is due to Katok \cite[Theorem 3.6]{Katok-Burns-Infinitesimal}.

\medskip
In dimension three, every ergodic invariant probability measure with positive metric
entropy is non-uniformly hyperbolic \cite{Ruelle-Entropy-Inequality},
hence there is a $\textsf{T}$--invariant set $M_0\subset M$ of full $\mu$--measure
s.t. for all $x\in M_0$ we have $T_xM=E^u(x)\oplus E^s(x)\oplus X(x)$
where $E^u(x),E^s(x)$ are one-dimensional linear subspaces satisfying:
\begin{enumerate}[$\circ$]
\item $\lim\limits_{t\to\pm\infty}\frac{1}{t}\log\|dT^t_x v\|<0$ for all non-zero $v\in E^s(x)$,
\item $\lim\limits_{t\to\pm\infty}\frac{1}{t}\log\|dT^{-t}_x v\|<0$ for all non-zero $v\in E^u(x)$,
\item $dT^t_x E^s(x)=E^s(T^t(x))$ and $dT^t_x E^u(x)=E^u(T^t(x))$, $\forall t\in\R$,
\item There is an immersed smooth curve $W^s(x)\ni x$ s.t. $T_y W^s(x)=E^s(y)$ and
$d(T^t(x),T^t(y))\xrightarrow[t\to\infty]{}0$, $\forall y\in W^s(x)$.
An analogous result holds for $W^u(x)$.
\end{enumerate}
See \cite[\S8.2]{Barreira-Pesin-Non-Uniform-Hyperbolicity-Book}.

\medskip
\noindent
{\sc Quadrilateral:} A {\em quadrilateral} is a closed  embedded curve $\gamma:[0,1]\to M$
s.t. there are four distinct points $x_0,x_1,x_2,x_3\in M_0$ with:
\begin{enumerate}[$\circ$]
\item $x_{i+1}\in W^{\tau_i}(x_i)$ for some $\tau_i\in\{s,u\}$ (here $x_4=x_0$),
\item If $\gamma(t_i)=x_i$, then $\gamma\restriction_{(t_i,t_{i+1})}$ is smooth
with $\gamma'(t)\in E^{\tau_i}(\gamma(t))$, $\forall t\in(t_i,t_{i+1})$.
\end{enumerate}

Quadrilaterals are the four-legged geometrical version of $su$--loops considered in page \pageref{su-loop}.
Call $x_0,\ldots,x_3$ the {\em vertices} of the quadrilateral. The next lemma is standard.

\begin{lem}\label{lemma-quadrilaterals}
Let $\textsf{T}=(M,\mu,\{T^t\})$ be as above. Then $E^s(x)\oplus E^u(x)={\rm ker}(\alpha_x)$, $\forall x\in M_0$.
In particular, if $\gamma$ is a quadrilateral then $\int_\gamma \alpha=0$.
\end{lem}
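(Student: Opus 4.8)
The plan is to reduce both assertions to a single fact: the contact form $\alpha$ annihilates the tangent spaces of the stable and unstable manifolds. Before that, I would record the elementary linear algebra. Since $\alpha$ is nowhere zero, $\ker(\alpha_x)$ is a $2$--dimensional subspace of $T_xM$; since $\alpha(X_x)=1\ne0$ we have $X_x\notin\ker(\alpha_x)$ and $T_xM=\ker(\alpha_x)\oplus\R X_x$. On the other hand, for $x\in M_0$ we are given $T_xM=E^u(x)\oplus E^s(x)\oplus\R X_x$, so $E^s(x)\oplus E^u(x)$ is a $2$--plane complementary to $\R X_x$. Thus it is enough to show $E^s(x)\subset\ker(\alpha_x)$ and $E^u(x)\subset\ker(\alpha_x)$; equality then follows by comparing dimensions.

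The key step I would establish is: for every $x\in M_0$ and every $y\in W^s(x)$, the covector $\alpha_y$ vanishes on $T_yW^s(x)=E^s(y)$; symmetrically, $\alpha_y$ vanishes on $T_yW^u(x)$ whenever $y\in W^u(x)$. To prove it, fix $v\in T_yW^s(x)$ and use that the Reeb flow preserves $\alpha$ (checked at the start of this section): $\alpha_y(v)=\bigl((T^t)^\ast\alpha\bigr)_y(v)=\alpha_{T^t(y)}\bigl(dT^t_yv\bigr)$ for all $t\in\R$. Since $M$ is compact, $c:=\sup_{p\in M}\|\alpha_p\|<\infty$, so $|\alpha_y(v)|\le c\,\|dT^t_yv\|$; and since $v$ is tangent to the stable manifold $W^s(x)$, one has $\|dT^t_yv\|\to0$ as $t\to+\infty$ by the contraction of stable manifolds in nonuniformly hyperbolic dynamics \cite[\S8.2]{Barreira-Pesin-Non-Uniform-Hyperbolicity-Book}. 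Letting $t\to+\infty$ gives $\alpha_y(v)=0$; the unstable case is identical with $t\to-\infty$.

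Taking $y=x$ in the key step gives $E^s(x),E^u(x)\subset\ker(\alpha_x)$, hence $E^s(x)\oplus E^u(x)=\ker(\alpha_x)$ by the dimension count, which is the first assertion. For the quadrilateral, write $\int_\gamma\alpha=\sum_{i=0}^{3}\int_{t_i}^{t_{i+1}}\alpha_{\gamma(t)}\bigl(\gamma'(t)\bigr)\,dt$ (with $x_4=x_0$ and $t_4=1$). For $t\in(t_i,t_{i+1})$ the point $\gamma(t)$ lies on $W^{\tau_i}(x_i)$ and $\gamma'(t)\in E^{\tau_i}(\gamma(t))=T_{\gamma(t)}W^{\tau_i}(x_i)$, so the key step forces $\alpha_{\gamma(t)}(\gamma'(t))=0$; hence every one of the four integrals vanishes and $\int_\gamma\alpha=0$.

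The only point requiring care is the claim $\|dT^t_yv\|\to0$ for a vector $v$ tangent to $W^s(x)$ at a point $y$ that may not belong to $M_0$: this goes slightly beyond the bare definition of $W^s(x)$ recalled in the setup (which records only $d(T^t x,T^t y)\to0$), but it is a standard feature of Pesin theory. If one prefers to avoid invoking it, the same conclusion for the quadrilateral follows from $\int_{\gamma|_{[t_i,t_{i+1}]}}\alpha=\int_{T^t(\gamma|_{[t_i,t_{i+1}]})}\alpha$ (flow-invariance of $\alpha$) together with the fact that the length of the image arc $T^t(\gamma|_{[t_i,t_{i+1}]})\subset W^{\tau_i}(T^t x_i)$ tends to $0$, which uses the contraction of the stable (resp.\ unstable) leaf of the base point $x_i\in M_0$ only.
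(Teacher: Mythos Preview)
Your proof is correct and follows essentially the same approach as the paper: use the flow-invariance of $\alpha$ together with the contraction of $dT^t$ on stable (resp.\ unstable) directions to show $E^s(x),E^u(x)\subset\ker(\alpha_x)$, then conclude by a dimension count and integrate along the legs of the quadrilateral. You are in fact slightly more careful than the paper in worrying about whether $\gamma(t)$ lies in $M_0$, and your alternative argument via flow-invariance of the line integral is a nice way to sidestep that issue.
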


\begin{proof}
Let $v\in E^s(x)$. By the $\textsf{T}$--invariance of $\alpha$,
$\alpha(v)=\lim_{t\to+\infty}\alpha(dT^t v)=0$, hence $E^s(x)\subset{\rm ker}(\alpha_x)$. Since contact forms are non-degenerate, $\dim{\rm ker}(\alpha_x)=2$ whence  $E^s(x)\oplus E^u(x)={\rm ker}(\alpha_x)$.
If $\gamma$ is a quadrilateral then $\gamma'(t)\in E^s(\gamma(t))\oplus E^u(\gamma(t))$ except at the vertices,
therefore $\int_\gamma\alpha=\int_0^1 \alpha(\gamma'(t))dt=0$.
\end{proof}

\begin{proof}[Proof of Theorem \ref{Thm-Reeb}.]
Using the same notation of section \ref{Section-Thm-Main}, there is a topological Markov flow $\sigma_r:\Sigma_r\to \Sigma_r$
and a H\"older continuous map $\pi_r:\Sigma_r\to M$ s.t.:
\begin{enumerate}[(1)]
\item $\pi_r\circ \sigma_r^t=T^t\circ\pi_r$, $\forall t\in\mathbb R$,
\item $\pi_r[\Sigma_r^\#]$ has full $\mu$--measure,
\item $\pi_r:\Sigma_r^\#\to M$ is finite-to-one,
\item $(\Sigma_r,\mu\circ\pi_r^{-1},\sigma_r)$ is Bernoulli up to a period, and it has a period iff
$r$ is arithmetic, iff the holonomy group equals $c\mathbb Z$ for some $c>0$ (see Theorem \ref{Thm-WM-K}).
\end{enumerate}
Assume by way of contradiction that there is a period.

Let $\nu$ be the induced measure of
$\mu\circ\pi_r^{-1}$, then $\nu$ is globally supported on $\Sigma$ and has local product structure
(Theorem \ref{Prop-Prod-Strctr-measure}).
Let $\nu^s_x,\nu^u_x$ be the projection measures of $\nu$, as in (\ref{nu-s}). These are globally supported measures
on $W^s_{\rm loc}(x),W^u_{\rm loc}(x)$.

Let $E$ be the set constructed on page \pageref{Page-Set-E}, then the holonomy group  equals the closure of the set of  weights of $su$--loops with vertices in $E$. The assumption that $\textsf T$ has a period translates to the holonomy group being equal to $c\Z$ with $c>0$.

The Bowen-Marcus cocycle $P^\tau(\cdot,\cdot)$ is H\"older continuous (Lemma \ref{Lemma-P}(4)),
therefore $\exists\delta>0$ s.t. $d(x,y)<\delta\Rightarrow P^\tau(x,y)<c/5$
wherever defined.  We claim there exist
four distinct points $w_0,\ldots,w_3\in E$ s.t. $d(w_i,w_j)<\delta$ for all $i,j$ and
$\gamma_0=\langle w_0,w_1,w_2,w_3,w_0\rangle$ is a $su$--loop with $P(\gamma_0)=0$.
This can be done as follows:
\begin{enumerate}[$\circ$]
\item Fix $x,y\in E$ s.t. $d(x,y)<\delta$ and $y\notin W^s_{\rm loc}(x)$.
\item By Claim 1 of Theorem \ref{Thm-WM-K}, $\nu^s_x(E^c)=\nu^s_y(E^c)=0$, hence
$\{w\in E\cap W^s_{\rm loc}(x):[w,y]\in E\}$ has full $\nu^s_x$--measure.
\item $\nu^s_x$ is globally supported on $W^s_{\rm loc}(x)$, thus there exist
$w_0,w_1\in\{w\in E\cap W^s_{\rm loc}(x):[w,y]\in E\}$ with $d(w_0,w_1)<\delta$.
Take $w_2=[w_1,y]$, $w_3=[w_0,y]$.
\item $\gamma_0=\langle w_0,w_1,w_2,w_3,w_0\rangle$ is a $su$--loop with
$|P(\gamma_0)|<\frac{4c}{5}<c\Rightarrow P(\gamma_0)=0$.
\end{enumerate}
Let $\widehat\gamma_0$ be the lifted $su$--path of $\gamma_0$, and
$\gamma:=\pi_r(\widehat\gamma_0)$. Since $\pi_r:\Sigma_r^\#\to M$ is finite-to-one and $\nu^s_x, \nu^u_x$ have global support, we can choose $w_0,w_1$  so that $\text{diam}(\gamma)<\delta$.

We claim that if $\delta,\epsilon$ are small enough, then for every $|t|<\epsilon$, the quadrilateral $T^{t}\gamma$ is the boundary of a piecewise smooth {\em immersed} surface $T^t U$ s.t.:
\begin{enumerate}[$\circ$]
\item $T^tU$ is the union of  compact smooth {\em embedded} surfaces $T^tU_i$, $i=0,1,2,3$.
\item $T^tU_i$ are uniformly transverse to the Reeb vector field.
\item $T^tU_i$ have piecewise smooth boundaries and $\int_{T^t\gamma}=\int_{T^t(\partial U)}=\sum_{i=0}^3\int_{T^t(\partial U_i)}$.
\end{enumerate}

Had $\gamma$ been an euclidean rectangle, we could take $U$ to be its interior,
and $U_i$ the four triangles described by the principal diagonals. The general case is similar. It is enough to treat $t=0$,
since the case of small $t$ follows from uniform transversality.

Let $w_0,\ldots,w_3\in {_{-N}[}a_{-N},\ldots,a_N]$, where $N$ is large to be chosen later.
Let $u_0,\ldots,u_3$ be the vertices of $\widehat\gamma_0$. If $\delta$ is small enough, then $\gamma$ is covered by a chart of $M$ and we can think of $\vec{u}_i:=u_i,\vec{\gamma}(t):=\gamma(t)$ as vectors in $\R^3$.
If $N$ is sufficiently large, then $\vec{\gamma}'(t)$ is nearly parallel to $E^u(u_0)$ or $E^s(u_0)$ at all points. Therefore $\vec{\gamma}$ is made of four curves which are $C^1$ close to the sides of a parallelogram
s.t. $\vec{u}_1-\vec{u}_0$, $\vec{u}_2-\vec{u}_3$ are nearly parallel to $E^s(u_0)$ and
$\vec{u}_2-\vec{u}_1$, $\vec{u}_3-\vec{u}_0$ are nearly parallel to $E^u(u_0)$.
There is no loss of generality in assuming that these vectors have norm in $(\frac{1}{2},2)$. Let $\epsilon_0:=$
$C^1$ distance between $\gamma$ and a parallelogram with sides
$\vec{u}_1-\vec{u}_0$ and $\vec{u}_3-\vec{u}_0$. Then $\epsilon_0\to 0$ as $N\to\infty$.

Let $\vec{z}:=\frac{1}{4}(\vec{u}_0+\cdots+\vec{u}_4)$, then
$\vec{z}-\frac{1}{2}(\vec{u}_i+\vec{u}_{i+1})=\frac{1}{2}(\vec{u}_{i-1}-\vec{u}_{i})+O(\epsilon_0)$,
where $\vec{u}_i:=\vec{u}_{i\text{(mod 4)}}$ (the approximation is an identity for real parallelograms).
We define $U_i$ to be the  cone with vertex $z$ and base $\vec{\gamma}_i$,
where $\vec{\gamma}_i:[0,1]\to \R^3$ is the ``leg" of $\vec\gamma$ from $\vec{u}_i$ to $\vec{u}_{i+1}$:
$$
U_i:=\{\vec{x}_i(s,t):=s\vec{\gamma}_i(t)+(1-s)\vec{z}:s,t\in [0,1]\},\ \ i=0,\ldots,3.
$$
$U_i$ are embedded, and $\int_{\gamma}=\int_{\partial U}=\sum_{i=0}^3\int_{\partial U_i}$.
At $\vec{x}_i(s,t)$, $U_i$ is perpendicular  to
$$
\vec{n}=(\vec{\gamma}_i(t)-\vec{z})\times {\gamma}_i'(t)=\biggl(\vec{\gamma}_i(t)-\frac{\vec{u}_i+\vec{u}_{i+1}}{2}\biggr)\times\vec{\gamma}_i'(t)+\biggl(\frac{\vec{u}_i+\vec{u}_{i+1}}{2}-\vec{z}\biggr)\times\vec{\gamma}_i'(t).
$$
The first summand is $O(\epsilon_0|\vec{\gamma}'_i(t)|)$, being  the product of  vectors at angle $O(\epsilon_0)$.
The second summand is of size $\sim |\vec{\gamma}'_i(t)|$ and $\epsilon_0$--parallel
to $\vec{e}^u(u_0)\times\vec{e}^s(u_0)$. By Lemma \ref{lemma-quadrilaterals},
$U_i$ is almost parallel to $\ker(\alpha)$, whence uniformly transverse to the Reeb flow.

Fix $t_0>0$ so small that
$D_i:=\bigcup_{t\in[0,t_0]}T^tU_i$ is a flow box. So
$
0\neq \sum_{i=0}^3 \int_{D_i}\omega=\sum_{i=0}^3 \int_{0}^{t_0}\bigl(\int_{T^tU_i}i_X\omega\bigr)dt=\sum_{i=0}^3 \int_{0}^{t_0}\bigl(\int_{T^tU_i}d\alpha\bigr)dt=\int_{0}^{t_0}\bigl(\int_{T^tU}d\alpha\bigr)dt
$. But by the Stokes Theorem, this equals  $\int_{0}^{t_0}\bigl(\int_{T^t\gamma}\alpha\bigr)dt=0$, since the inner integral is zero by  Lemma \ref{lemma-quadrilaterals}. We obtain a contradiction. \end{proof}

\section{Equilibrium states for the geometric potential}\label{Section-Geometric-Potential}

Let $M$ be a three dimensional compact $C^\infty$ Riemannian manifold, let $X:M\to TM$ be a
non-vanishing $C^{1+\epsilon}$ vector field, and let $\textsf{T}$ be the flow on $M$ generated by $X$.
Throughout this section we assume $\textsf{T}$ has positive topological entropy.

\medskip
\noindent
{\sc The subset $M_{\rm hyp}\subset M$:} $p\in M_{\rm hyp}$ if there
are unit vectors $\un{e}^s_p,\un{e}^u_p\in T_pM$ s.t.
$$
\lim_{t\to\pm\infty} \frac{1}{|t|}\log\|dT^t_p\un{e}^s_p\|<0\ \text{ and } \ \lim_{t\to\pm\infty} \frac{1}{|t|}\log\|dT^t_p\un{e}^u_p\|>0.
$$

\medskip
If $\un{e}^s_p,\un{e}^u_p$ exist, then they are unique up to a sign, hence $M_{\rm hyp}$
is $\textsf{T}$--invariant.
By the Oseledets theorem and the Ruelle entropy inequality, any $\textsf{T}$--invariant and ergodic measure with 
positive metric entropy is carried by $M_{\rm hyp}$.

\medskip
\noindent
{\sc The geometric potential of $\textsf{T}$ \cite{Bowen-Ruelle-SRB}:}  $J:M_{\rm hyp}\to\R$ given by
$$
J(p)=-\tfrac{d}{dt}\Big|_{t=0}\log\|dT^t_p \un{e}^u_p\|=-\lim_{t\to 0}\tfrac{1}{t}\log\|dT^t_p\un{e}^u_p\|.
$$

\noindent
$J$ is bounded, since $\{T^t\}$ is $C^{1+\epsilon}$.

\subsection*{Some facts from \cite{Lima-Sarig}}

We recall some facts from \cite[\S2]{Lima-Sarig}. There exists a Poincar\'e section $\Lambda\subset M$
with return map $f:\Lambda\to\Lambda$ and roof function $R:\Lambda\to\R$ s.t.:
\begin{enumerate}[(1)]
\item $\Lambda$ is the union of disjoint discs transverse to $X$.
\item $\inf R>0$ and $\sup R<\infty$.
\item Let $\mathfrak{S}\subset\Lambda$ denote the {\em singular set} of $f:\Lambda\to\Lambda$, consisting of points $p$ which do not have a (relative) neighborhood $V\subset\Lambda\setminus\partial\Lambda$ which is diffeomorphic to a disc, such that $f|_V, f^{-1}|_V$ are diffeomorphisms onto their images. There is a constant $\mathfrak C$
s.t. $R,f,f^{-1}$ are differentiable on $\Lambda':=\Lambda\backslash\mathfrak{S}$
with $\sup_{p\in\Lambda'}\|dR_p\|<\mathfrak C$,
$\sup_{p\in\Lambda'}\|df_p\|<\mathfrak C$, $\sup_{p\in\Lambda'}\|(df_p)^{-1}\|<\mathfrak C$, and
$\|f|_{U}\|_{C^{1+\epsilon}}<\mathfrak C,\|f^{-1}|_{U}\|_{C^{1+\epsilon}}<\mathfrak C$ for all open
and connected $U\subset\Lambda'$. See \cite[Lemma 2.5]{Lima-Sarig}.
\item For all $p\in\Lambda_{\rm hyp}:=(\Lambda\backslash\bigcup_{n\in\Z}f^n(\mathfrak S))\cap M_{\rm hyp}$
there are $\vec{v}^s_p,\vec{v}^u_p\in T_p\Lambda$ unitary s.t.
$$
\lim_{n\to\pm\infty}\tfrac{1}{|n|}\log\|df^n_p\vec{v}^s_p\|<0\ \text{ and }
\ \lim_{n\to\pm\infty}\tfrac{1}{|n|}\log\|df^n_p\vec{v}^u_p\|>0.
$$
See \cite[Lemma 2.6]{Lima-Sarig} and its proof.
\end{enumerate}
Suppose $\mu$ is a hyperbolic $\textsf{T}$--invariant probability measure on $M$, and $\mu_\Lambda$, the ``induced measure", is the measure on $\Lambda$ s.t. 
$\mu=\frac{1}{\int_\Lambda Rd\mu_\Lambda}\int_\Lambda\left[\int_0^{R(p)}\delta_{T^tp}dt\right]d\mu_\Lambda(p)$. Then $\Lambda$ 
can be chosen with the additional properties below.
\begin{enumerate}[(5)]
\item[(5)]  The induced measure $\mu_\Lambda$ on $\Lambda$ satisfies:
\begin{enumerate}[(5.1)]
\item[(5.1)] $\mu_\Lambda(\mathfrak S)=0$.
\item[(5.2)] $\lim_{n\to\infty}\frac{1}{n}{\rm dist}_\Lambda(f^n(p),\mathfrak S)=0$ $\mu_\Lambda$--a.e.
\end{enumerate}
See \cite[Thm 2.8]{Lima-Sarig}.
\item[(6)] There are a TMF $\sigma_r:\Sigma_r\to\Sigma_r$ and
H\"older continuous maps $\pi:\Sigma\to \Lambda$ and $\pi_r:\Sigma_r\to M$ s.t.:
\begin{enumerate}[(6.1)]
\item[(6.1)] $\pi\circ \sigma=f\circ \pi$, $\pi[\Sigma^\#]$ has full $\mu_\Lambda$--measure, and every
$x\in \pi[\Sigma^\#]$ has finitely many pre-images in $\Sigma^\#$.
\item[(6.2)] $\pi_r(x,t)=T^t\pi(x)$, $\pi_r\circ \sigma_r=T\circ\pi_r$, $\pi_r[\Sigma_r^\#]$ has full $\mu$--measure,
and every $p\in\pi_r[\Sigma_r]$ has finitely many pre-images in $\Sigma_r^\#$.
\end{enumerate}
See \cite[Thm 5.6]{Lima-Sarig}. $\Sigma^\#,\Sigma^\#_r$ denote the regular parts of $\Sigma,\Sigma_r$,
see \cite[\S1]{Lima-Sarig}.
\end{enumerate}

\medskip
\noindent
Finally, if $\mu$ is ergodic with  positive entropy then $h_{\mu_\Lambda}(f)>0$ and
$\mu_\Lambda(\Lambda_{\rm hyp})=1$.

\medskip
\noindent
{\sc Geometric potential of $f:\Lambda\to\Lambda$:} 
$J^f:\Lambda_{\rm hyp}\to\R$, $J^f(p)=-\log\|df_p\vec{v}^u_p\|$.

\medskip
\noindent
$J^f$ is bounded, since $\sup_{p\in\Lambda'}\|df_p\|<\mathfrak C,\sup_{p\in\Lambda'}\|(df_p)^{-1}\|<\mathfrak C$.
Even though $J,J^f$ are not globally defined, we can define their equilibrium measures.

\medskip
\noindent
{\sc Equilibrium measures of $J$ and $J^f$:} $\mu$ is called an {\em equilibrium measure of} $J$
if $h_\mu(T^1)+\int J d\mu=P_{\rm top}(J)$, where
$$
P_{\rm top}(J):=\sup\left\{
h_\nu(T^1)+\int_M J d\nu:
\begin{array}{c}
\nu\text{ is $\textsf{T}$--invariant Borel probability}\\
\text{ measure with }\nu(M_{\rm hyp})=1
\end{array}
\right\}.
$$
$P_{\rm top}(J)$ is called the {\em topological pressure} of $J$. Similar definitions hold for $J^f$ with
$T^1,M_{\rm hyp}$ replaced by $f,\Lambda_{\rm hyp}$.

\medskip
\noindent
$P_{\rm top}(J),P_{\rm top}(J^f)<\infty$, since $J,J^f$ are bounded. Similar definitions can also be given 
for functions of the form $aJ,bf$, $a,b\in\R$.

\begin{lem}\label{Lemma-relation-geometric-potentials}
Assume that $\Lambda,f,R$ and $\mu$ satisfy conditions $(1)$--$(6)$ above.
Then $\mu$ is an equilibrium measure of $J$ iff
$\mu_\Lambda$ is an equilibrium measure of $J^f-P_{\rm top}(J)f$.
\end{lem}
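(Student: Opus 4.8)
The plan is to follow the Bowen--Ruelle computation used in Claim 1 of the proof of Theorem \ref{Thm-RPF}, the only genuinely new ingredient being an explicit identification of the potential that $J$ induces on the Poincar\'e section $\Lambda$. First I would compute, for $p\in\Lambda_{\rm hyp}$, the integral $\int_0^{R(p)}J(T^tp)\,dt$: since $\un{e}^u$ is a unit field and $dT^t_p\un{e}^u_p$ stays parallel to $\un{e}^u_{T^tp}$, the function $t\mapsto\log\|dT^t_p\un{e}^u_p\|$ is $C^1$ with derivative $-J(T^tp)$, so
\[
\int_0^{R(p)}J(T^tp)\,dt=-\log\|dT^{R(p)}_p\un{e}^u_p\|.
\]
Then, writing $\Pi_q\colon T_qM\to T_q\Lambda$ for the projection along $X(q)$ and $a(q):=\|\Pi_q\un{e}^u_q\|$ (so that $\vec v^u_q=a(q)^{-1}\Pi_q\un{e}^u_q$), I would use that $df_p=\Pi_{f(p)}\circ dT^{R(p)}_p|_{T_p\Lambda}$ and $dT^{R(p)}_p\un{e}^u_p\in\R\,\un{e}^u_{f(p)}$ to get
\[
J^f(p)=-\log\|df_p\vec v^u_p\|=-\log\|dT^{R(p)}_p\un{e}^u_p\|+\log a(p)-\log a(f(p)).
\]
Comparing the two displays, $J^f$ and $p\mapsto\int_0^{R(p)}J(T^tp)\,dt$ are cohomologous on $\Lambda_{\rm hyp}$ via the coboundary $\log a-\log a\circ f$; by conditions $(1)$--$(3)$ both functions are bounded on $\Lambda_{\rm hyp}$ (use $\sup|J|<\infty$ and $\sup R<\infty$ for the integral, and $\|df_p\|,\|df_p^{-1}\|<\mathfrak C$ for $J^f$), so the coboundary is bounded there and hence integrates to $0$ against every $f$--invariant measure giving full mass to $\Lambda_{\rm hyp}$.

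With this identification I would run the standard argument. By conditions $(1)$--$(6)$ the induced--measure correspondence $\nu\mapsto\nu_\Lambda$ is a bijection between $\textsf T$--invariant probability measures with $\nu(M_{\rm hyp})=1$ and $f$--invariant probability measures with $\nu_\Lambda(\Lambda_{\rm hyp})=1$. For such a pair, the definition of $\nu_\Lambda$ combined with the first step gives $\int_M J\,d\nu=\bigl(\int_\Lambda R\,d\nu_\Lambda\bigr)^{-1}\int_\Lambda J^f\,d\nu_\Lambda$, and Abramov's formula \cite{Abramov-Entropy-Flows} gives $h_\nu(T^1)=\bigl(\int_\Lambda R\,d\nu_\Lambda\bigr)^{-1}h_{\nu_\Lambda}(f)$; hence
\[
h_\nu(T^1)+\int_M J\,d\nu-P_{\rm top}(J)=\frac{h_{\nu_\Lambda}(f)+\int_\Lambda\bigl(J^f-P_{\rm top}(J)R\bigr)\,d\nu_\Lambda}{\int_\Lambda R\,d\nu_\Lambda}.
\]
The left side is $\le0$ and the denominator is positive and bounded above, so the numerator is $\le0$ for every admissible $\nu_\Lambda$, and it tends to $0$ along the $\nu_\Lambda$ induced by a maximizing sequence for $P_{\rm top}(J)$; thus $P_{\rm top}\bigl(J^f-P_{\rm top}(J)R\bigr)=0$. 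Finally, reading off the displayed identity at $\mu$ and $\mu_\Lambda$ and using $0<\int_\Lambda R\,d\mu_\Lambda<\infty$, I would conclude that $h_\mu(T^1)+\int J\,d\mu=P_{\rm top}(J)$ if and only if $h_{\mu_\Lambda}(f)+\int_\Lambda(J^f-P_{\rm top}(J)R)\,d\mu_\Lambda=0=P_{\rm top}(J^f-P_{\rm top}(J)R)$, i.e. $\mu$ is an equilibrium measure of $J$ iff $\mu_\Lambda$ is an equilibrium measure of $J^f-P_{\rm top}(J)R$. (Here $R$ denotes the roof function, which is what the statement writes as ``$f$''.)

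The routine parts are Abramov's formula and the pressure rearrangement, identical to Theorem \ref{Thm-RPF}. The main obstacle is the cohomology step: one has to verify the geometric facts that $\vec v^u_p$ is the $X(p)$--projection of $\un{e}^u_p$, that $df_p$ restricted to $T_p\Lambda$ factors through $\Pi_{f(p)}$, and above all that the coboundary $\log a-\log a\circ f$ is harmless. Note that $\log a$ itself may be unbounded on $\Lambda_{\rm hyp}$ (the angle between $E^u$ and the flow direction is only tempered in the non-uniformly hyperbolic setting), so one cannot simply invoke $\int(\log a-\log a\circ f)\,d\nu_\Lambda=0$ for free; instead one uses that the \emph{coboundary} is bounded on $\Lambda_{\rm hyp}$ by conditions $(1)$--$(3)$, together with the elementary fact that if $\psi=g-g\circ f$ with $\psi$ bounded and $g$ finite a.e., then $\psi$ integrates to zero against every $f$--invariant measure (apply Birkhoff's theorem and note that $N^{-1}g\circ f^N\to0$ in measure). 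A secondary point to check is that the induced--measure correspondence is genuinely a bijection between the two relevant classes of invariant measures, which follows from conditions $(1)$--$(6)$, in particular $(5.1)$ and the finiteness of $R$.
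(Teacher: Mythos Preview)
Your proposal is correct and follows essentially the same approach as the paper. Both proofs compute $\int_0^{R(p)}J(T^tp)\,dt=-\log\|dT^{R(p)}_p\un{e}^u_p\|$, exhibit $J^f$ as this integral plus a coboundary (your transfer function $\log a$ is exactly $-\log|\alpha|$ in the paper's decomposition $\vec v^u_p=\alpha(p)\un{e}^u_p+\beta(p)X_p$), and then invoke the Abramov/Bowen--Ruelle pressure identity from Theorem~\ref{Thm-RPF}. The only cosmetic differences are your use of the projection $\Pi_q$ along $X$ in place of the paper's coordinate decomposition, and your justification that the coboundary integrates to zero via $N^{-1}g\circ f^N\to 0$ in measure, whereas the paper uses Poincar\'e recurrence to get $\liminf_n|U\circ f^n-U|<\infty$ a.e.\ and then compares Birkhoff averages; both arguments are valid and equivalent in spirit.
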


\begin{proof}
Let $\overline{J}:\Lambda_{\rm hyp}\to\R$,
$\overline{J}(p)=\int_0^{R(p)}J(T^sp)ds$. As in claim 1 of the proof of Theorem \ref{Thm-RPF},
$\mu$ is an equilibrium measure of $J$ iff $\mu_\Lambda$ is an equilibrium measure of $\ov{J}-P_{\rm top}(J)f$. We will show that $\int_\Lambda \overline{J} d\nu=\int_\Lambda J^fd\nu$ for every
$f$--invariant $\nu$ with $\nu(\Lambda_{\rm hyp})=1$, and deduce that $\mu$ is an equilibrium measure of $J$ iff $\mu_\Lambda$ is an equilibrium measure of $J^f-P_{\rm top}(J)f$.

A simple calculation\footnote{Let $h(t):=-\log\|dT^t\un{e}^u_p\|$, then
$h(0)=0$ and $-\log\|dT^t\un{e}^u_{T^sp}\|=h(t+s)-h(s)$, therefore
$-\tfrac{d}{dt}\Big|_{t=0}\log\|dT^t\un{e}^u_{T^sp}\|=\tfrac{d}{dt}\Big|_{t=0}[h(t+s)-h(s)]=h'(s)$.
By the fundamental theorem of calculus, $\overline{J}(p)=\int_0^{R(p)}h'(s)ds=h(R(p))=-\log\|dT^{R(p)}\un{e}^u_p\|$.}
shows that
\begin{align*}
\overline{J}(p)=-\int_0^{R(p)}\tfrac{d}{dt}\Big|_{t=0}\log\|dT^t\un{e}^u_{T^sp}\|ds
=-\log\|dT^{R(p)}\un{e}^u_p\|.
\end{align*}
Since $f(p)=T^{R(p)}(p)$, we have $df_pv=dT^{R(p)}_pv+\<\nabla R(p),v\> X_{f(p)}$, $\forall v\in T_p\Lambda$. Write
$\vec{v}^u_p=\alpha(p)\un{e}^u_p+\beta(p)X_p$ (necessarily $\alpha(p)\neq0$). Then
\begin{align*}
df\vec{v}^u_p&=dT^{R(p)}\vec{v}^u_p+\<\nabla R(p),\vec{v}^u_p\>X_{f(p)}\\
&=\alpha(p)dT^{R(p)}\un{e}^u_p+\beta(p)dT^{R(p)}X_p+\<\nabla R(p),\vec{v}^u_p\>X_{f(p)}\\
&=\pm \alpha(p)\|dT^{R(p)}\un{e}^u_p\|\un{e}^u_{f(p)}+[\beta(p)+\<\nabla R(p),\vec{v}^u_p\>]X_{f(p)}.
\end{align*}
 Similarly
$
df\vec{v}^u_p=\pm \|df\vec{v}^u_p\|\vec{v}^u_{f(p)}=\pm(\alpha(f(p))\|df\vec{v}^u_p\|\un{e}^u_{f(p)}+
\beta(f(p))\|df\vec{v}^u_p\|X_{f(p)}).
$

Comparing the $\un{e}^u_{f(p)}$ components, we get
$|\alpha(p)|\|dT^{R(p)}\un{e}^u_p\|=|\alpha(f(p))|\|df\vec{v}^u_p\|$. Hence $U:\Lambda_{\rm hyp}\to\R$,
$U(p):=\log|\alpha(p)|$ is a measurable function with $$\overline{J}=J^f+U\circ f-U.$$ 

We use this to show that $\mu$ is an equilibrium measure for $J$ iff $\mu_\Lambda$ is an equilibrium measure for $J^f-P_{\rm top}(J)f$.
By (4) and (5), $\mu(M_{\rm hyp})=\mu_\Lambda(\Lambda_{\rm hyp})$.
Let $\nu$ be an ergodic $f$--invariant
probability measure with $\nu(\Lambda_{\rm hyp})=1$.
By the Birkhoff ergodic theorem, $\lim_{n\to\infty}\frac{1}{n}\overline{J}_n=\int_\Lambda\overline{J} d\nu$
and $\lim_{n\to\infty}\frac{1}{n}J_n^f=\int_\Lambda J^f d\nu$ $\nu$--a.e.
By the Poincar\'e recurrence theorem, $\liminf_{n\to\infty}|U(f^n(p))-U(p)|<\infty$
$\nu$--a.e., hence for $\nu$--a.e. $p\in\Lambda$ we have
$\int_\Lambda\overline{J}d\nu=\liminf_{n\to\infty}\tfrac{1}{n}\overline{J}(p)=\liminf_{n\to\infty}\tfrac{1}{n}J^f(p)
=\int_\Lambda J^f d\nu$.
By the ergodic decomposition, $\int_\Lambda\overline{J}d\nu=\int_\Lambda J^f d\nu$ for every $f$--invariant $\nu$ s.t.
$\nu(\Lambda_{\rm hyp})=1$. The lemma follows from the discussion at the beginning of the proof.
\end{proof}

\begin{lem}
$[J^f-P_{\rm top}(J)f]\circ\pi$
is a H\"older continuous potential on $\Sigma$ with respect to the symbolic metric.
\end{lem}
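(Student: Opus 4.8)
The plan is to peel off the roof–function term and reduce the statement to the H\"older continuity of $J^f\circ\pi$, which I would then extract from two structural properties of the coding of \cite{Lima-Sarig}. Keeping the notation of Lemma~\ref{Lemma-relation-geometric-potentials} (where ``$f$'' in the potential is the roof function $R$, playing the role of $r$ in Claim~1 of Theorem~\ref{Thm-RPF}), I would first write
\[
[J^f-P_{\rm top}(J)f]\circ\pi \;=\; J^f\circ\pi \;-\; P_{\rm top}(J)\,(R\circ\pi),
\]
and recall that $R\circ\pi=r$ is the roof function of the TMF $\sigma_r$, hence H\"older continuous with respect to the symbolic metric by the construction in \cite{Lima-Sarig}. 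Since $J$ is bounded, $P_{\rm top}(J)<\infty$, so the second summand is H\"older, and it remains to prove that $J^f\circ\pi$ is H\"older on $\Sigma$.

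For this I would fix $x,y$ in the dense, full–measure set $\pi^{-1}(\Lambda_{\rm hyp})$ (which contains $\Sigma^\#$, so that a uniform H\"older estimate there extends by uniform continuity to all of $\Sigma$) with $x_i=y_i$ for $|i|\le n$, and bound $|J^f(\pi x)-J^f(\pi y)|$ by $C\theta^n$. Writing $J^f(p)=-\log\|df_p\vec v^u_p\|$ with $\vec v^u_p$ unit in $E^u(p)$, property~(3) gives $\|df_p\|,\|df_p^{-1}\|<\mathfrak C$ on $\Lambda_{\rm hyp}$, so $\|df_p\vec v^u_p\|\in[\mathfrak C^{-1},\mathfrak C]$ and $-\log$ is Lipschitz on this range; hence it suffices to bound $\|df_{\pi x}\vec v^u_{\pi x}-df_{\pi y}\vec v^u_{\pi y}\|$. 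I would then invoke, from \cite{Lima-Sarig}: \emph{(i)} $\pi$ is H\"older continuous, so $\dist_\Lambda(\pi x,\pi y)\le C_1\theta_1^n$, and for $n$ large $\pi x,\pi y$ lie in a common connected open subset of $\Lambda\setminus\mathfrak S$ whose distance to $\mathfrak S$ dominates its diameter (the admissible charts at $\pi x$ have size comparable to a power of $\theta$ and are uniformly bounded away from $\mathfrak S$); combined with the $C^{1+\epsilon}$–control of $f$ on connected subsets of $\Lambda'=\Lambda\setminus\mathfrak S$ in property~(3), this gives $\|df_{\pi x}-df_{\pi y}\|\le\mathfrak C\,\dist_\Lambda(\pi x,\pi y)^\epsilon\le C_2\theta_2^n$; \emph{(ii)} $E^u(\pi x)$ is the tangent at $\pi x$ to the admissible unstable manifold determined by $x_{-\infty}^0$, and admissible manifolds attached to pasts that agree up to $-n$ are $C^{1+\epsilon}$–close up to $\lesssim\theta^n$, so $\angle(E^u(\pi x),E^u(\pi y))\le C_3\theta_3^n$ (symmetrically for $E^s$). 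Combining (i) and (ii) with the boundedness of $df$ yields $\|df_{\pi x}\vec v^u_{\pi x}-df_{\pi y}\vec v^u_{\pi y}\|\le C\theta^n$, hence the desired bound.

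The soft part is the bookkeeping with $\log$, norms and the triangle inequality; the real content is (i)--(ii), and the main obstacle is the singular set $\mathfrak S$. Because $f$ is only $C^{1+\epsilon}$ with controlled norms \emph{off} $\mathfrak S$, one must know that $\pi$ maps long cylinders into regions that are far from $\mathfrak S$ on the scale of their own size, and that the unstable direction depends H\"older–continuously (not merely continuously) on the negative coordinates; both hold precisely because the Markov partition of \cite{Lima-Sarig} is built from Pesin charts of tempered size. So in writing the proof I would isolate and cite the exact statements needed from \cite{Lima-Sarig} (H\"older continuity of $\pi$; H\"older dependence of the stable/unstable admissible manifolds on the coding; the lower bound for the distance of $\pi({}_{-n}[x_{-n},\dots,x_n])$ to $\mathfrak S$), after which the remaining estimate is routine. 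Adding back the H\"older term $-P_{\rm top}(J)\,r$ then completes the proof.
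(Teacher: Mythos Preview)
Your proposal is correct and follows essentially the same route as the paper: split off the roof-function term $R\circ\pi=r$, which is H\"older by construction, and then argue that $J^f\circ\pi$ is H\"older by combining the uniform $C^{1+\epsilon}$--control of $df$ on $\Lambda'$ with the H\"older dependence of $x\mapsto \vec v^{\,u}_{\pi(x)}$ on the symbolic coordinates. The only difference is packaging: the paper compresses your step~(ii) into a single citation of \cite[Lemma~5.7]{Lima-Sarig}, which states directly that $x\mapsto \vec v^{\,u}_{\pi(x)}$ is H\"older on $\Sigma$, whereas you unpack the mechanism (tangents of admissible unstable manifolds, tempered Pesin charts, distance to $\mathfrak S$).
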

\begin{proof}
$f\circ\pi:\Sigma\to\R$ is H\"older  by construction: $f\circ\pi=r$ and roof functions of TMF are H\"older. 
$J^f\circ\pi$ is H\"older, because $df$ is uniformly H\"older on $\Lambda'$ and $x\in\Sigma\to\vec{v}^u_{\pi(x)}$ is H\"older  by 
\cite[Lemma 5.7]{Lima-Sarig}. 
\end{proof}

\begin{proof}[Proof of Theorem \ref{Thm-Geometric-Potential}.]
Fix $\chi>0$, and let $\mu$ be a $\chi$--hyperbolic\footnote{$\mu$ is $\chi$--hyperbolic if $\mu$--a.e.
point has one Lyapunov exponent $>\chi$ and another $<-\chi$.} equilibrium measure of $J$ with $h_\mu(T^1)>0$.
Take $\Lambda,f,R$ satisfying $(1)$--$(6)$ above. Since $\mu$ is carried by $M_{\rm hyp}$,
Lemma \ref{Lemma-relation-geometric-potentials} implies that $\mu_\Lambda$ is an
equilibrium measure of $J^f-P_{\rm top}(J)f$. Arguing as in \cite[Theorem 6.2]{Lima-Sarig},
the function $[J^f-P_{\rm top}(J)f]\circ\pi:\Sigma\to\R$ has an equilibrium measure $\widehat{\mu_\Lambda}$
s.t. $\widehat{\mu_\Lambda}\circ\pi^{-1}=\mu_\Lambda$.
The potential $[J^f-P_{\rm top}(J)f]\circ\pi$
is H\"older continuous. Since ergodic equilibrium measures of H\"older potentials
on a TMS are carried by topologically transitive TMS, $\widehat{\mu_\Lambda}$ has at most countably many
ergodic components. This shows that $J$ has at most countably many $\chi$--hyperbolic ergodic equilibrium measures:
if there were uncountably many, then their convex
combination would generate a $\chi$--hyperbolic equilibrium measure with uncountably many ergodic
components.

Assume now that $\mu$ is also ergodic. We can choose $\widehat{\mu_\Lambda}$ to be ergodic.
The measure $\widehat{\mu_\Lambda}$ is the induced measure of some $\widehat\mu$ on $\Sigma_r$,
hence $\widehat\mu\circ\pi_r^{-1}=\mu$.
By Theorems \ref{Thm-K-Mixing} and \ref{Thm-Bernoulli} $(\Sigma_r,\widehat\mu,\sigma_r)$ is
Bernoulli up to a period. Since $\widehat\mu$ projects to $\mu$, $(M,\mu,\{T^t\})$
is also Bernoulli up to a period.

If additionally $\textsf{T}$ is a Reeb flow, then $\sigma_r$ is Bernoulli and
so is $\textsf{T}$.
\end{proof}

\section{Acknowledgements}
We thank Federico Rodriguez-Hertz for pointing out \cite{Katok-Burns-Infinitesimal}.
We also thank J\'er\^{o}me Buzzi, Yakov Pesin, and the referee for suggesting we extend our results to scalar
multiples of the geometric potential.

\bibliographystyle{alpha}
\bibliography{Bernoulli-for-Flows-bib}{}

\end{document}